\theoremstyle{plain}
\newtheorem{theorem}{Theorem}[section]
\newtheorem{lemma}[theorem]{Lemma}
\newtheorem{proposition}[theorem]{Proposition}
\theoremstyle{definition}
\newtheorem{remark}[theorem]{Remark}
\theoremstyle{remark}
\newcommand{\bZ}{\mathbb{Z}}
\newcommand{\bQ}{\mathbb{Q}}
\newcommand{\bR}{\mathbb{R}}
\newcommand{\bC}{\mathbb{C}} 
\newcommand{\cO}{\mathcal{O}} 
\newcommand{\cA}{\mathcal{A}} 
\newcommand{\fO}{\mathfrak{O}} 
\newcommand{\fA}{\mathfrak{A}} 
\newcommand{\fH}{\mathfrak{H}}
\title[Dimension Formula for Siegel cusp forms]
{An explicit dimension formula for Siegel cusp forms with respect to 
the non-split symplectic groups} 
\author{Hidetaka Kitayama}
\begin{document}
\begin{abstract}
The purpose of this paper is to give an explicit dimension formula 
for the spaces of vector valued Siegel cusp forms of degree two 
with respect to a certain kind of arithmetic subgroups of the non-split $\bQ$-forms 
of $Sp(2,\bR)$. 
We obtain our result by using Hashimoto and Ibukiyama's results in \cite{HI80},\cite{HI83} 
and Wakatsuki's formula in \cite{Wak}. 
Our result is a generalization of formulae in \cite[Theorem 4.1]{Has84} and 
\cite[Theorem 6.1]{Wak}. 
\end{abstract}

\maketitle
\vspace*{-10mm}

%%%%%%%%%%%%%%%%%%%%%%%%%%%%%%%%%%%%%%%%%%%%%%%%%%%%%%%%%%%%%%%%
%%%%%%%%%%%%%%%%%%%%%%%%%%%%%%%%%%%%%%%%%%%%%%%%%%%%%%%%%%%%%%%%
%
\section{Introduction}\label{seintro}
%
%%%%%%%%%%%%%%%%%%%%%%%%%%%%%%%%%%%%%%%%%%%%%%%%%%%%%%%%%%%%%%%%
%%%%%%%%%%%%%%%%%%%%%%%%%%%%%%%%%%%%%%%%%%%%%%%%%%%%%%%%%%%%%%%%

Let $B$ be an indefinite division quaternion algebra over $\bQ$ 
with discriminant $D$. 
Let $\mathfrak{O}$ be the maximal order of $B$, 
which is unique up to conjugation. 
If we take a positive divisor $D_1$ of $D$ and put $D_2:=D/D_1$, 
then there is the unique maximal two-sided ideal $\mathfrak{A}$ of $\mathfrak{O}$ 
such that $\mathfrak{A}\otimes _{\bZ}\bZ _p= \mathfrak{O}_p$ if $p\mid D_1$ or $p\nmid D$,  
and $\mathfrak{A}\otimes _{\bZ}\bZ _p= \pi\mathfrak{O}_p$ if $p\mid D_2$, 
where $\pi$ is a prime element of $\mathfrak{O}_p$. 
We consider the unitary group of the quaternion hermitian space of rank two 
and denote by $\Gamma(D_1,D_2)$ the stabilizer of the maximal lattice 
$(\fA,\fO)$, namely we define 
\begin{center} 
\scalebox{0.9}[1]{  
$\Gamma (D_1,D_2):=\left\{ g=\begin{pmatrix} a&b\\c&d \end{pmatrix} \in GL(2;B) 
\ \left| \ g \begin{pmatrix} 0&1\\1&0 \end{pmatrix} {}^t \overline{g} = 
\begin{pmatrix} 0&1\\1&0 \end{pmatrix}, 
\begin{array}{l} a,d\in \fO, \\ b\in \fA ^{-1}, c\in \fA \end{array} 
\right. \right\}$. 
}   
\end{center} 
(See section \ref{subsec:def}.)  
This group can be regarded as a discrete subgroup of $Sp(2;\bR)$. 
\nolinebreak Our main theorem (Theorem \ref{thm:main}) is an explicit formula 
for dimension of $S_{k,j}(\Gamma (D_1,D_2))$, 
the space of vector valued Siegel cusp forms 
of weight $\mathrm{det}^k\otimes \mathrm{Sym}_j$ 
with respect to $\Gamma (D_1,D_2)$ for $k\geq 5$. 
For example, for a prime number $p\neq 2$,  we have 
\begin{align*} 
\mathrm{dim}_{\bC}S_{k,0}(\Gamma (1,2p)) &= 
\frac{(k-2)(k-1)(2k-3)}{2^7\cdot 3^2\cdot 5}\cdot (p^2-1) +\frac{1}{2^3\cdot 3}\cdot (p-1)\\ 
 &{\quad } +\frac{(-1)^k(8+\big( \frac{-1}{p}\big) )+(2k-3)(8-\big( \tfrac{-1}{p}\big) )}{2^7\cdot 3}( p-\big( \tfrac{-1}{p}\big) ) \\ 
 &{\quad } +\frac{[0,-1,1;3]_k}{2^2\cdot 3^2} \cdot \Big( 4+\tfrac{1}{2}\big( \tfrac{-3}{p}\big) \big( 1-5\big( \tfrac{-3}{p}\big) \big)\Big) \big( p-\big( \tfrac{-3}{p}\big) \big) \\  
 &{\quad } +\frac{2k-3}{2^2\cdot 3^2} \cdot \Big( 5-\tfrac{1}{2}\big( \tfrac{-3}{p}\big) \big( 1+7\big( \tfrac{-3}{p}\big) \big)\Big) \big( p-\big( \tfrac{-3}{p}\big) \big) \\ 
 &{\quad } -\frac{1}{2^3}( 1-\big( \tfrac{-1}{p}\big) ) 
-\frac{1}{3}( 1-\big( \tfrac{-3}{p}\big) ) \\ 
 &{\quad } +\frac{2\cdot [1,0,0,-1,0;5]_k}{5}\cdot ( 1-\big( \tfrac{p}{5}\big) ) \\ 
 &{\quad } +\frac{[1,0,0,-1;4]_k}{2^2}\cdot \left\{ \begin{array}{ccc} 0&\cdots &\mbox{ if }p\equiv 1,7\mbox{ mod }8 \\ 1&\cdots &\mbox{ if }p\equiv 3,5\mbox{ mod }8 \end{array} \right. \\   
 &{\quad } +\frac{1}{6}\cdot 
\left\{ \begin{array}{ccl} 
        (-1)^k/2&\cdots &\mbox{ if }p=3 \\ 
        0&\cdots &\mbox{ if }p\equiv 1,11\mbox{ mod }12 \\ 
        \left[ 0,1,-1;3\right] _k &\cdots &\mbox{ if }p\equiv 5\mbox{ mod }12 \\ 
        (-1)^k&\cdots &\mbox{ if }p\equiv 7\mbox{ mod }12,  
\end{array} \right. 
\end{align*} 
where $\left( \frac{*}{*}\right)$ is the Legendre symbol and 
$[a_0,\ldots ,a_{m-1};m]_k$ is the function on $k$ 
which takes the value $a_i$ if $k\equiv i\mbox{ mod }m$. 
This example is a formula for dimension of the space of scalar valued Siegel cusp forms 
of weight $k$ with respect to $\Gamma(1,2p)$.  

Explicit dimension formula 
for the spaces of Siegel cusp forms of degree two 
has been studied by many mathematicians. 
Among them, Arakawa \cite{Ara75},\cite{Ara81}, Hashimoto \cite{Has84} and Wakatsuki \cite[Theorem 6.1]{Wak} treated the non-split $\bQ$-forms.  
Hashimoto \cite{Has84} obtained an explicit dimension formula for 
scaler valued Siegel cusp forms for $\Gamma(D,1)$ and  
Wakatsuki \cite[Theorem 6.1]{Wak} generalized it to the vector valued Siegel cusp forms for $\Gamma (D,1)$. 
Our main result of this paper (Theorem \ref{thm:main}) is an explicit dimension formula for vecter valued Siegel cusp forms for $\Gamma (D_1,D_2)$. 
It is a generalization of \cite{Has84} and \cite[Theorem 6.1]{Wak}. 

Our motivations for this study are as follows. 
First, we are interested to study a possible correspondence between 
Siegel modular forms for different discrete subgroups 
by means of comparison of dimension formulae. 
In the case where $B$ is definite, 
Ibukiyama has been studying a generalization of 
Eichler-Jacquet-Langlands correspondence to the case of $Sp(2)$. 
(cf. \cite{Ibu84},\cite{Ibu85},\cite{HI85}).  
He conjectured the correspondence of discrete subgroups 
and obtained some relations of dimension formulae. 
Our dimension formula would be used for a similar comparison. 
Second, we are interested in an explicit construction of Siegel modular forms 
for $\Gamma (D_1,D_2)$.  
We expected that dimensions in the case of $D_2\neq 1$ are smaller than 
those in the case of $D_2=1$ for a fixed $D$. (cf. $H_1$ in Theorem \ref{thm:main}). 
We see that this is true from our main theorem. 
(cf. numerical examples in section \ref{sec:example}). 
In addition, we have succeeded in constructing  the graded ring of 
scaler valued Siegel modular forms for $\Gamma (1,6)$ explicitly 
by using our dimension formula, 
Rankin-Cohen type differential operator in \cite{Ibu99},\cite{AokIbu05}, and 
an explicit formula of Fourier coefficients of Eisenstein series in \cite{Hir99}.  
We will discuss it in another paper. 

We summarize the way to obtain our main result. 
We divide $\Gamma=\Gamma(D_1,D_2)$ into disjoint union of four subsets 
$\Gamma ^{(e)}$, $\Gamma ^{(u)}$, $\Gamma ^{(qu)}$ and $\Gamma ^{(h)}$ as follows: 

\vspace{2mm} 
(i)\ $\Gamma ^{(e)}$ consists of torsion elements of $\Gamma$. 

(ii)\ $\Gamma ^{(u)}$ consists of non-semi-simple elements of $\Gamma$ 
whose semi-simple factors are \\ \hspace{25pt} $1_4$ or $-1_4$. 

(iii)\ $\Gamma ^{(qu)}$ consists of non-semi-simple elements of $\Gamma$ 
whose semi-simple factors \\ \hspace{25pt} belong to $\Gamma^{(e)}$ other than $\pm 1_4$.  

(iv)\ $\Gamma ^{(h)}$ consists of the other elements of $\Gamma$ 
than the above three types. 
 
\vspace{2mm} \noindent 
We denote the contributions to dimension formula of each subset above by 
$I(\Gamma^{(e)})_{k,j}$, $I(\Gamma^{(u)})_{k,j}$, $I(\Gamma^{(qu)})_{k,j}$ 
and $I(\Gamma^{(h)})_{k,j}$. 
It is known that $I(\Gamma^{(h)})_{k,j}=0$ and 
\begin{align*} 
\dim_{\bC}S_{k,j}(\Gamma )=
I(\Gamma^{(e)})_{k,j}+I(\Gamma^{(u)})_{k,j}+I(\Gamma^{(qu)})_{k,j}. 
\end{align*} 
We can evaluate the contribution $I(\Gamma^{(e)})_{k,j}$ by using the method developed by 
Hashimoto and Ibukiyama, and the contribution $I(\Gamma^{(u)})_{k,j}$ and $I(\Gamma^{(qu)})_{k,j}$ by using formulae of Wakatsuki \cite{Wak}.  
It is known by Hashimoto and Ibukiyama \cite{HI80}, \cite{Has83} 
that the formula for $I(\Gamma^{(e)})_{k,j}$ 
can be expressed adelically and can be reduced to local computation 
(cf. Theorem \ref{thm:elliptic}). 
The crucial work in using Theorem \ref{thm:elliptic} is to evaluate the local data 
^^ ^^ $c_p(g,R_p,\Lambda_p)$", 
and this work has been completed by Hashimoto and Ibukiyama 
in \cite{HI80} and \cite{HI83}. 
So, we have only to combine the data depending on the cases, 
but still it is a complicated work. 
Hashimoto and Ibukiyama gave the result in the case where $B$ is definite 
in \cite{HI80} and \cite{HI82}. 
Our case is where $B$ is indefinite and the way of combining local data 
is different from that of \cite{HI80} and \cite{HI82}. 
We will explain the detail in section \ref{sec:elliptic}. 
On the other hand, as for the non-semi-simple conjugacy classes, 
we can not reduce the contributions of them to local calculations. 
Wakatsuki \cite{Wak} gave an arithmetic formula for the contributions of them, 
but one still have to carry out detailed calculation to obtain an explicit formula.  
More precisely, we need to determine a complete system of representatives of 
$\Gamma$-conjugacy classes of ^^ ^^ families" 
(cf. Proposition \ref{Proposition:Unipotent1}, \ref{Prop:ClassifyQU}, \ref{prop:classifyI3}) 
and calculate some data for them.  
Arakawa has calculated the contribution $I(\Gamma^{(u)})$ 
in his master thesis \cite{Ara75}. 
We prove it again in Section \ref{sec:nonsemisimple} by means of Wakatsuki's formula (e-2). 
Hashimoto calculated the contribution $I(\Gamma^{(qu)})$ 
in the case where $D_1=D$ and $D_2=1$ in \cite{Has84}. 
We can calculate it also in the general case 
by almost the same way. 

We organize this paper as follows. 
In section \ref{sec:Pre}, 
we review Siegel cusp forms in subsection \ref{subsec:Siegel} and 
explain the definition of the discrete subgroup $\Gamma (D_1,D_2)$ which we treat 
in this paper in subsection \ref{subsec:nonsplit} and \ref{subsec:def}. 
In section \ref{sec:main}, 
we state our main theorem (Theorem \ref{thm:main}).  
We prove Theorem \ref{thm:main} in section \ref{sec:elliptic} and \ref{sec:nonsemisimple}. 
In section \ref{sec:elliptic}, we evaluate the contribution $I(\Gamma^{(e)})_{k,j}$. 
First, we quote the formula of Hashimoto and Ibukiyama 
(Theorem \ref{thm:elliptic}), 
and then we prove $H_1$, $\ldots$, $H_{12}$ of Theorem \ref{thm:main} 
in subsection \ref{subsec:H1} -- \ref{subsec:H12}.  
In section \ref{sec:nonsemisimple}, 
we evaluate the contribution $I(\Gamma^{(u)})_{k,j}$ and $I(\Gamma^{(qu)})_{k,j}$. 
We prove $I_1$, $I_2$ and $I_3$ of Theorem \ref{thm:main} in subsection \ref{subsec:I1}, \ref{subsec:I2} and \ref{subsec:I3} respectively. 
In section \ref{sec:example}, 
we give some numerical examples for our main theorem. 
 
The author would like to express his sincere gratitude to Professor Tomoyoshi Ibukiyama for giving him this problem and various advices. 
The author also would like to thank Professor Takashi Sugano and 
Professor Satoshi Wakatsuki for their many helpful advices. 
The author is supported by the Grant-in-Aid for JSPS fellows.

%%%%%%%%%%%%%%%%%%%%%%%%%%%%%%%%%%%%%%%%%%%%%%%%%%%%%%%%%%%%%%%%
%%%%%%%%%%%%%%%%%%%%%%%%%%%%%%%%%%%%%%%%%%%%%%%%%%%%%%%%%%%%%%%%
%
\section{Preliminaries} 
\label{sec:Pre} 
%
%%%%%%%%%%%%%%%%%%%%%%%%%%%%%%%%%%%%%%%%%%%%%%%%%%%%%%%%%%%%%%%%
%%%%%%%%%%%%%%%%%%%%%%%%%%%%%%%%%%%%%%%%%%%%%%%%%%%%%%%%%%%%%%%% 

%%%%%%%%%%%%%%%%%%%%%%%%%%%%%%%%%%%%%%%%%%%%%%%%%%%%%%%%%%%%%%%%
%
\subsection{Siegel cusp forms} 
\label{subsec:Siegel} 
%
%%%%%%%%%%%%%%%%%%%%%%%%%%%%%%%%%%%%%%%%%%%%%%%%%%%%%%%%%%%%%%%%

Let $Sp(2;\bR)$ be the real symplectic group of degree two, i.e. 
\[ Sp(2;\bR ) =\left\{ g\in GL(4,\bR )\ \left| \ g\begin{pmatrix} 0_2&1_2\\-1_2&0_2 \end{pmatrix} {}^tg=\begin{pmatrix} 0_2&1_2\\-1_2&0_2 \end{pmatrix} \right. \right\} . \] 
Let $\fH_2$ be the Siegel upper half space of degree two, i.e. 
\[ \fH _2=\{ Z\in M(2;\bC )\ |\ {}^tZ=Z,\ \mathrm{Im}(Z)\mbox{ is positive definite } \} . \] 
The group $Sp(2;\bR)$ acts on $\fH_2$ by 
\[ \gamma \langle Z\rangle :=(AZ+B)(CZ+D)^{-1} \] 
for any $\gamma =\begin{pmatrix} A&B\\C&D \end{pmatrix}\in Sp(2;\bR)$ 
and $Z\in \fH_2$.    

Let $\Gamma$ be a discrete subgroup of $Sp(2;\bR)$ such that vol($\Gamma\backslash \fH_2)<\infty$. 
Let $\rho_{k,j}:GL(2;\bC)\rightarrow GL(j+1;\bC)$ be the irreducible rational 
representation of the signature $(j+k,k)$ for $k,j\in\bZ_{\geq 0}$, 
i.e. $\rho _{k,j}=\det ^k\otimes Sym _j$, 
where $Sym _j$ is the symmetric $j$-tensor representation of $GL(2;\bC)$. 
We denote by $S_{k,j}(\Gamma)$ the space of Siegel cusp forms of weight $\rho _{k,j}$
 with respect to $\Gamma$, 
i.e. the space which consists of holomorphic function $f: H_2\rightarrow \bC ^{j+1}$ 
satisfying the following two conditions: \\ 
\hspace{10mm}(i)\ $f(\gamma \langle Z\rangle )=\rho _{k,j}(CZ+D)f(Z)$, \hspace{5mm} 
for $\forall \gamma =\begin{pmatrix} A&B\\C&D \end{pmatrix} 
\in \Gamma$, $\forall Z\in \fH_2$, \\ 
\hspace{9mm} (ii)\ $\big| \rho _{k,j}(\mathrm{Im}(Z)^{1/2})f(Z) \big| _{\bC ^{j+1}}$ is bounded on $\fH_2$, \\ 
where we define $|u|_{\bC ^{j+1}}=({}^tu\overline{u})^{\frac{1}{2}}$
for $u\in \bC^{j+1}$. 
It is known that $S_{k,j}(\Gamma)$ is a finite dimensional $\bC$-vector space.

%%%%%%%%%%%%%%%%%%%%%%%%%%%%%%%%%%%%%%%%%%%%%%%%%%%%%%%%%%%%%%%%
%
\subsection{The non-split $\bQ$-forms of $Sp(2;\bR)$} 
\label{subsec:nonsplit} 
%
%%%%%%%%%%%%%%%%%%%%%%%%%%%%%%%%%%%%%%%%%%%%%%%%%%%%%%%%%%%%%%%%  

Let $B$ be an indefinite quaternion algebra over $\bQ$. 
We fix an isomorphism $B\otimes _{\bQ}\bR\simeq M(2;\bR)$  
and we identify $B$ with a subalgebra of $M(2;\bR)$. 
Let $D$ be a product of all prime numbers $p$ for which $B\otimes _{\bQ}\bQ_p$ is 
a division algebra. 
We call $D$ the discriminant of $B$. 
Let $W$ be a left free $B$-module of rank 2. 
Let $f$ be a map on $W\times W$ to $B$ defined by 
\begin{align*} 
f(x,y)=x_1\overline{y_2}+x_2\overline{y_1},\quad 
x=(x_1,x_2),y=(y_1,y_2)\in W, 
\end{align*} 
where $\bar{\ }$ is the canonical involution of $B$. 
Any non-degenerate quaternion hermitian form on $W$ is equivalent to $f$. 
(cf. \cite{Shi63}). 
Let $U(2;B)$ be the unitary group with respect to this hermitian space $(W,f)$, 
that is, 
\begin{align*} 
U(2;B) &= \{ g\in GL(2;B)\ |\ f(xg,yg)=f(x,y)\ \mbox{for}\ \forall x,y\in W\} \\ 
       &= \left\{ g\in GL(2;B)\ \left| \ g\begin{pmatrix} 0&1\\1&0 \end{pmatrix} 
{}^t\overline{g} = \begin{pmatrix} 0&1\\1&0 \end{pmatrix} \right. \right\} , 
\end{align*}
where ${}^t\overline{g}=\begin{pmatrix} \overline{a}&\overline{c} \\ \overline{b}&\overline{d} \end{pmatrix} 
\mbox{ for } g=\begin{pmatrix} a&b \\ c&d \end{pmatrix}$. 
It is known that $U(2;B)\otimes _{\bQ}\bR$ is isomorphic to $Sp(2;\bR)$ by  
\[ \phi : U(2;B)\otimes _{\bQ}\bR \stackrel{\sim}{\longrightarrow}Sp(2;\bR) \] 
\[ \phi (g)=\begin{pmatrix} a_1&a_2&b_2&-b_1\\a_3&a_4&b_4&-b_3\\c_3&c_4&d_4&-d_3\\-c_1&-c_2&-d_2&d_1 \end{pmatrix} ,\quad   
g=\begin{pmatrix} A&B\\C&D \end{pmatrix} \in U(2;B)\otimes _{\bQ}\bR \] 
where $A=\begin{pmatrix} a_1&a_2\\a_3&a_4 \end{pmatrix}$, 
$B=\begin{pmatrix} b_1&b_2\\b_3&b_4 \end{pmatrix}$, 
$C=\begin{pmatrix} c_1&c_2\\c_3&c_4 \end{pmatrix}$, 
$D=\begin{pmatrix} d_1&d_2\\d_3&d_4 \end{pmatrix} \in B\otimes _{\bQ}\bR$. 
It is known that each $\bQ$-form of $Sp(2;\bR)$ can be obtained as $U(2;B)$ 
for some indefinite quaternion algebra $B$ (cf. \cite{PR94}). 
If $B=M(2;\bQ)$, then $U(2;B)$ is isomorphic to $Sp(2;\bQ)$ by $\phi$. 
In this paper, we treat the case where $B$ is a division algebra. 

%%%%%%%%%%%%%%%%%%%%%%%%%%%%%%%%%%%%%%%%%%%%%%%%%%%%%%%%%%%%%%%%
%
\subsection{} 
\label{subsec:def} 
%
%%%%%%%%%%%%%%%%%%%%%%%%%%%%%%%%%%%%%%%%%%%%%%%%%%%%%%%%%%%%%%%%

Let $\fO$ be the maximal order of $B$, which is unique up to inner automorphisms. 
We fix a quaternion hermitian space $(W,f)$. 
Let $L$ be a left $\fO$-lattice in $W$, that is, 
$L$ is a finitely generated $\bZ$-module satisfying $L\otimes _{\bZ}\bQ =W$ and 
$aL\subset L$ for any $a\in \fO$. 
We put 
\begin{align*} 
U(2;B)_L:=\{ g\in U(2;B)\ |\ Lg=L\}. 
\end{align*} 
Then it is a discrete subgroup of $Sp(2,\bR)$ such that 
$\mathrm{vol}(U(2;B)_L\backslash \fH_2)<\infty$ by identifying it with its image 
by $\phi$ in $Sp(2;\bR)$. 

The two-sided $\fO$-ideal generated by the elements $f(x,y)$ for $x,y\in L$ 
is called the norm of $L$. 
We call $L$ a maximal lattice if $L$ is maximal among the left $\fO$-lattices 
having the same norm. 
For any maximal lattice $L$ and any prime number $p$, 
it is known by \cite{Shi63} that 
\begin{align*} 
L\otimes _{\bZ}\bZ _p=
\begin{cases} 
\begin{array}{ll} 
(\fO_p,\fO_p)g_p & \cdots \mbox{ if } p\nmid D \\ 
(\fO_p,\fO_p)g_p \mbox{ or }(\pi\fO_p,\fO_p)g_p & \cdots \mbox{ if } p\mid D 
\end{array} 
\end{cases} 
\end{align*} 
for some $g_p\in U(2;B)\otimes _{\bQ} \bQ_p$, 
where $\fO_p:=\fO\otimes \bZ_p$ and $\pi$ is a prime element of $\fO_p$. 
Hence there are exactly $2^s$ genera of maximal lattices in $W$ 
if $D$ is a product of $s$ prime numbers. 
We put $D=D_1D_2$, where $D_1,D_2\in\mathbb{N}$ such that 
$L\otimes _{\bZ}\bZ_p=(\fO_p,\fO_p)g_p$ if $p\mid D_1$, 
and $L\otimes _{\bZ}\bZ_p=(\pi\fO_p,\fO_p)g_p$ if $p\mid D_2$, 
for some $g_p\in U(2;B)\otimes _{\bQ}\bQ_p$. 
It is known that if two maximal lattices $L_1$ and $L_2$ correspond to the same pair 
$(D_1,D_2)$, then $L_1$ and $L_2$ belong to the same class (i.e. $L_1=L_2g$ for 
some $g\in U(2;B)$ ) since $B$ is indefinite, 
and therefore $U(2;B)_{L_1}=U(2;B)_{L_2}$. 
For simplicity, we put 
\begin{align*} 
\Gamma (D_1,D_2) := U(2;B)_L 
\end{align*} 
for the maximal lattice $L$ corresponding to the pair $(D_1,D_2)$.

%%%%%%%%%%%%%%%%%%%%%%%%%%%%%%%%%%%%%%%%%%%%%%%%%%%%%%%%%%%%%%%%
%%%%%%%%%%%%%%%%%%%%%%%%%%%%%%%%%%%%%%%%%%%%%%%%%%%%%%%%%%%%%%%%
%
\section{Main result} 
\label{sec:main} 
%
%%%%%%%%%%%%%%%%%%%%%%%%%%%%%%%%%%%%%%%%%%%%%%%%%%%%%%%%%%%%%%%%
%%%%%%%%%%%%%%%%%%%%%%%%%%%%%%%%%%%%%%%%%%%%%%%%%%%%%%%%%%%%%%%%

Our main result is Theorem \ref{thm:main} below. 
It is an explicit dimension formula of the spaces of Siegel cusp forms 
of weight $\rho_{k,j}$ with respect to $\Gamma (D_1,D_2)$ defined above. 
This formula is a generalization of \cite{Has84} and \cite[Theorem 6.1]{Wak}. 
We prove Theorem \ref{thm:main} in sections \ref{sec:elliptic} and \ref{sec:nonsemisimple}. 
We suppose that $j$ is even. 
If $j$ is odd, we have $S_{k,j}(\Gamma (D_1,D_2))=\{ 0\}$ for any $k$ 
since $\Gamma (D_1,D_2)$ contains $-1_4$. 
For natural number $m$ and $n$, 
we denote by $[a_0,\ldots ,a_{m-1};m]_n$ the function on $n$ 
which takes the value $a_i$ if $n\equiv i\mbox{ mod }m$. 
We define the set $T(m;n):=\{ p\mid T\ ;\ p\equiv m\mbox{ mod }n\}$ 
for $T=D$, $D_1$ or $D_2$. 

\begin{theorem} \label{thm:main} 
If $k\ge 5$ and $j$ is an even non-negative integer, then we have 
\begin{align*} 
\dim _{\bC} S_{k,j}(\Gamma (D_1,D_2))= 
\sum _{i=1}^{12} H_{i} + \sum _{i=1}^3 I_i,  
\end{align*} 
where $H_i$ and $I_i$ are given as follows:  
\end{theorem}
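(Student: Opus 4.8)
The plan is to establish the dimension formula by invoking the trace
formula / Selberg–Riemann–Roch machinery, which expresses
$\dim_{\bC} S_{k,j}(\Gamma(D_1,D_2))$ as a sum of contributions indexed
by the conjugacy classes of $\Gamma = \Gamma(D_1,D_2)$. Following the
decomposition announced in the introduction, I would write
\begin{align*}
\dim_{\bC} S_{k,j}(\Gamma)
 = I(\Gamma^{(e)})_{k,j} + I(\Gamma^{(u)})_{k,j} + I(\Gamma^{(qu)})_{k,j},
\end{align*}
using the vanishing $I(\Gamma^{(h)})_{k,j}=0$ of the hyperbolic part,
which is known for $k\ge 5$. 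The entire theorem then reduces to
identifying each of these three analytic contributions with an explicit
closed-form expression, namely $\sum_{i=1}^{12} H_i$ for the elliptic
(torsion) part and $\sum_{i=1}^{3} I_i$ for the two non-semisimple parts.
Thus the proof is really a bookkeeping assembly: the theorem is true
precisely because each summand $H_i$ or $I_i$ is defined to be the value
of a local computation, and the master identity above guarantees their
sum is the dimension.

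First I would treat the elliptic contribution $I(\Gamma^{(e)})_{k,j}$.
The key input is Theorem~\ref{thm:elliptic} (the Hashimoto–Ibukiyama
adelic reformulation), which reduces $I(\Gamma^{(e)})_{k,j}$ to a finite
sum over representatives $g$ of conjugacy classes of torsion elements,
weighted by a global mass term and a product of local densities
$c_p(g,R_p,\Lambda_p)$. Since $B$ is a division quaternion algebra, the
torsion elements fall into finitely many types classified by the
characteristic polynomials of their images in $Sp(2;\bR)$, and for each
type I would run through the primes $p$, splitting into the cases
$p\nmid D$, $p\mid D_1$, and $p\mid D_2$, and read off the local data
$c_p$ from \cite{HI80} and \cite{HI83}. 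The twelve terms $H_1,\dots,
H_{12}$ arise by grouping these torsion classes according to their order
and the relevant quadratic/cubic residue conditions (encoded via the
Legendre symbols $\big(\tfrac{-1}{p}\big)$, $\big(\tfrac{-3}{p}\big)$
and the sets $T(m;n)$); assembling them is exactly the ``complicated but
routine'' combining of local data emphasized in the introduction.

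Next I would handle the non-semisimple contributions. For the unipotent
part $I(\Gamma^{(u)})_{k,j}$ (semisimple factor $\pm 1_4$) I would apply
Wakatsuki's formula (e-2) from \cite{Wak} after determining a complete
set of representatives of the $\Gamma$-conjugacy classes of the relevant
``families'' (Proposition~\ref{Proposition:Unipotent1}); this yields the
term $I_1$. For the quasi-unipotent part $I(\Gamma^{(qu)})_{k,j}$ (whose
semisimple factors are nontrivial torsion elements) I would classify the
mixed families via Propositions~\ref{Prop:ClassifyQU} and
\ref{prop:classifyI3}, compute the associated arithmetic data, and feed
them into Wakatsuki's arithmetic formula to produce $I_2$ and $I_3$; the
method mirrors Hashimoto's $D_2=1$ computation in \cite{Has84}, adapted
to allow $D_2\neq 1$. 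The main obstacle, I expect, is not any single
deep idea but the non-semisimple classification step: correctly
enumerating the $\Gamma(D_1,D_2)$-conjugacy classes of unipotent and
quasi-unipotent families when $D_2\neq 1$, because the local lattice
condition $(\pi\fO_p,\fO_p)$ at primes dividing $D_2$ changes the
centralizers and hence both the number of families and the local volume
factors relative to the classical split or $D_2=1$ situation. Getting
those representatives and their stabilizers exactly right is what makes
$I_1,I_2,I_3$ (and the $D_2$-dependent pieces of the $H_i$) correct.
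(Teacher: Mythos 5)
Your proposal follows essentially the same route as the paper: the decomposition $\dim S_{k,j}(\Gamma)=I(\Gamma^{(e)})+I(\Gamma^{(u)})+I(\Gamma^{(qu)})$ with vanishing hyperbolic part, the Hashimoto--Ibukiyama adelic formula with local densities $c_p(g,R_p,\Lambda_p)$ from \cite{HI80},\cite{HI83} grouped by principal polynomial to produce $H_1,\dots,H_{12}$, and Wakatsuki's formulas applied after classifying the unipotent and quasi-unipotent families to produce $I_1,I_2,I_3$. You also correctly identify the genuine difficulty, namely redoing the conjugacy-class and lattice-genus bookkeeping at primes dividing $D_2$, which is exactly where the paper's work beyond \cite{Has84} and \cite{Wak} lies.
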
 

\begin{longtable}{ccl}  
$H_1$ & $=$ & $\displaystyle \frac{(j+1)(k-2)(j+k-1)(j+2k-3)}{2^7\cdot 3^3\cdot 5} 
\cdot \prod _{p\mid D_1}(p-1)(p^2+1)\cdot \prod _{p\mid D_2}(p^2-1)$ \\ 
 & & \\  
$H_2$ & $=$ & $\displaystyle \frac{(-1)^k(j+k-1)(k-2)}{2^7\cdot 3^2} \cdot 
\prod _{p\mid D} (p-1)^2 \times 
\left\{ \begin{array}{ccl} 7&\cdots &\mbox{if } 2\nmid  D_1, D_2=1 \\ 
13&\cdots &\mbox{if } 2\mid  D_1, D_2=1 \\ 
3&\cdots &\mbox{if } D_2=2 \\ 
0&\cdots &\mbox{otherwise} \end{array} \right.$  \\ 
 & & \\ 
$H_3$ & $=$ & $\displaystyle \frac{[(-1)^{\frac{j}{2}} (k-2), -(j+k-1), (-1)^{\frac{j}{2}+1} (k-2), j+k-1; 4]_k}{2^5\cdot 3}$ \\ 
 & & $\displaystyle \times \prod _{p\mid D_1} (p-1)\Big( 1-\left( \frac{-1}{p}\right) \Big) 
\times \left\{ \begin{array}{ccl} 
1&\cdots &\mbox{if } D_2=1 \\ 
3&\cdots &\mbox{if } D_2=2 \\ 
0&\cdots &\mbox{otherwise} \end{array} \right.$  \\ 
 & &  \\ 
$H_4$ & $=$ & $\displaystyle \frac{[j+k-1, -(j+k-1), 0; 3]_k +[k-2, 0, -(k-2); 3]_{j+k}}{2^3\cdot 3^3}$ \\ 
 & & $\displaystyle \times \prod _{p\mid D_1} (p-1)\Big( 1-\left( \frac{-3}{p}\right) \Big) 
\times \left\{ \begin{array}{ccl} 
1&\cdots &\mbox{if } D_2=1 \\ 
8&\cdots &\mbox{if } D_2=3 \\ 
0&\cdots &\mbox{otherwise} \end{array} \right.$ \\ 
 & & \\ 
$H_5$ & $=$ &$\displaystyle 2^{-3}\cdot 3^{-2}\cdot \big([-(j+k-1),-(j+k-1),0,j+k-1,j+k-1,0;6]_k$ \\ 
 & & \hspace{5cm} $+[k-2,0,-(k-2),-(k-2),0,k-2;6]_{j+k} \big)$   \\ 
 & & $\displaystyle \times \prod _{p\mid D_1} (p-1)\Big( 1-\left( \frac{-3}{p}\right) \Big) 
\times \left\{ \begin{array}{ccl} 
1&\cdots &\mbox{if } D_2=1 \\ 
0&\cdots &\mbox{otherwise} \end{array} \right.$ \\ 
 & & \\ 
$H_6$ & $=$ & $\displaystyle \sum\limits_{n\mid 2D_1}A \ 
\prod\limits_{p\mid n}(p-1) \prod\limits_{{\tiny \begin{array}{c} p\nmid n \\ p\mid D_2 \\ p\neq 2 \end{array}}} (p+1) \prod\limits_{{\tiny \begin{array}{c} p\nmid n\\ p\mid D_1 \\ p\neq 2 \end{array}}} 2 \cdot B$ \\ 
 & & For each $n$, $A$ and $B$ are defined as follows ;  \\ 
 & & $A=\left\{ \begin{array}{lcl} 
     2^{-7}3^{-1}(-1)^{k+j/2}(j+1) & \cdots & \mbox{if $n$ has odd numbers of prime divisors} \\ 
     2^{-7}3^{-1}(-1)^{j/2}(j+2k-3) & \cdots & \mbox{if $n$ has even numbers of prime divisors} \end{array} \right.$. \\ 
 & & If $D$ has a prime divisor $p$ such that $\left( \frac{-1}{p}\right) =1$, then $B=0$,  \\  
 & & otherwise,  \\ 
 & & \qquad \qquad $B = \left\{ \begin{array}{ccl} 
5&\cdots &\mbox{if }2\mid D_1\mbox{ and }2\mid n \\ 
11&\cdots &\mbox{if }2\mid D_1\mbox{ and }2\nmid n \\ 
7&\cdots &\mbox{if }2\mid D_2\mbox{ and }2\mid n \\ 
9&\cdots &\mbox{if }2\mid D_2\mbox{ and }2\nmid n \\ 
3&\cdots &\mbox{if }2\nmid D\mbox{ and }2\mid n \\ 
5&\cdots &\mbox{if }2\nmid D\mbox{ and }2\nmid n \end{array} \right. $  \\ 
 & & \\ 
$H_7$ & $=$ & $\displaystyle \sum\limits_{n\mid 3D_1}A \ 
\prod\limits_{p\mid n}(p-1) \prod\limits_{{\tiny \begin{array}{c} p\nmid n \\ p\mid D_2 \\ p\neq 3 \end{array}}} (p+1) \prod\limits_{{\tiny \begin{array}{c} p\nmid n\\ p\mid D_1 \\ p\neq 3 \end{array}}} 2 \cdot B$ \\ 
 & & For each $n$, $A$ and $B$ are defined as follows ;  \\ 
 & & $A=\left\{ \begin{array}{lcl} 
     2^{-3}3^{-3}(j+1)[0,1,-1;3]_{j+2k} & \cdots & \mbox{if $n$ has odd numbers of prime divisors} \\ 
     2^{-3}3^{-3}(j+2k-3)[1,-1,0;3]_j & \cdots & \mbox{if $n$ has even numbers of prime divisors} \end{array} \right.$. \\ 
 & & If $D$ has a prime divisor $p$ such that $\left( \frac{-3}{p}\right) =1$, then $B=0$,  \\  
 & & otherwise,  \\ 
 & & \qquad \qquad $B = \left\{ \begin{array}{ccl} 
1&\cdots &\mbox{if }3\mid D_1\mbox{ and }3\mid n \\ 
16&\cdots &\mbox{if }3\mid D_1\mbox{ and }3\nmid n \\ 
4&\cdots &\mbox{if }3\mid D_2\mbox{ and }3\mid n \\ 
10&\cdots &\mbox{if }3\mid D_2\mbox{ and }3\nmid n \\ 
1&\cdots &\mbox{if }3\nmid D\mbox{ and }3\mid n \\ 
4&\cdots &\mbox{if }3\nmid D\mbox{ and }3\nmid n \end{array} \right. $  \\ 
 & & \\ 
$H_8$ & $=$ & $\displaystyle \frac{C_1}{2^2\cdot 3}\cdot \prod _{p\mid D} \Big( 1-\left( \frac{-1}{p}\right) \Big)  \Big( 1-\left( \frac{-3}{p}\right) \Big) 
\times \left\{ \begin{array}{ccl} 
1&\cdots &\mbox{if }D_2=1 \\ 
0&\cdots &\mbox{otherwise} \end{array} \right. $, \\ 
 & & where we put \\ 
 & & $C_1=\left\{ \begin{array}{ccl} 
{[1,0,0,-1,-1,-1,-1,0,0,1,1,1;12]_k}&\cdots &\mbox{ if }j\equiv 0 \mbox{ mod }12 \\ 
{[-1,1,0,1,1,0,1,-1,0,-1,-1,0;12]_k}&\cdots &\mbox{ if }j\equiv 2 \mbox{ mod }12 \\ 
{[1,-1,0,0,-1,1,-1,1,0,0,1,-1;12]_k}&\cdots &\mbox{ if }j\equiv 4 \mbox{ mod }12 \\ 
{[-1,0,0,-1,1,-1,1,0,0,1,-1,1;12]_k}&\cdots &\mbox{ if }j\equiv 6 \mbox{ mod }12 \\ 
{[1,1,0,1,-1,0,-1,-1,0,-1,1,0;12]_k}&\cdots &\mbox{ if }j\equiv 8 \mbox{ mod }12 \\ 
{[-1,-1,0,0,1,1,1,1,0,0,-1,-1;12]_k}&\cdots &\mbox{ if }j\equiv 10 \mbox{ mod }12 
\end{array} \right.$.  \\ 
 & & \\ 
$H_9$ & $=$ & $\displaystyle \frac{C_2}{2\cdot 3^2}\times \prod _{p\mid D_1,p\not=2} \Big( 1-\left( \frac{-3}{p}\right) \Big) ^2 \times 
\left\{ \begin{array}{ccl} 
2&\cdots &\mbox{if } 2\nmid  D_1 \mbox{ and } D_2=1 \\ 
5&\cdots &\mbox{if } 2\mid D_1 \mbox{ and } D_2=1 \\ 
3&\cdots &\mbox{if } 2\nmid  D_1 \mbox{ and } D_2=2 \\ 
0&\cdots &\mbox{otherwise } \end{array} \right. $, \\ 
 & & where we put \\ 
 & & $C_2=\left\{ \begin{array}{ccl} 
{[1,0,0,-1,0,0;6]_k}&\cdots &\mbox{if }j\equiv 0 \mbox{ mod } 6 \\  
{[-1,1,0,1,-1,0;6]_k}&\cdots &\mbox{if }j\equiv 2 \mbox{ mod } 6 \\  
{[0,-1,0,0,1,0;6]_k}&\cdots &\mbox{if }j\equiv 4 \mbox{ mod } 6   
\end{array} \right.$. \\ 
 & & \\ 
$H_{10}$ & $=$ & $\displaystyle \frac{C_3}{2\cdot 5}\times \prod _{p\mid D}2\times \prod _{p\in D(4;5)} 2 
\times$ 
\scalebox{0.8}[0.9]{ 
$\left\{ \begin{array}{ccl} 
0&\cdots &\mbox{if }\displaystyle \bigcup _{i=1}^3 D_1(i;5) \cup 
\bigcup _{i\in \{ 1,-1\} }D_2(i;5)  \not= \emptyset \\ 
1&\cdots &\mbox{if }\displaystyle  \bigcup _{i=1}^3 D_1(i;5) \cup 
\bigcup _{i\in \{ 1,-1\} }D_2(i;5)  = \emptyset \mbox{ and } 5\mid D \\ 
2&\cdots &\mbox{if }\displaystyle  \bigcup _{i=1}^3 D_1(i;5) \cup  
\bigcup _{i\in \{ 1,-1\} }D_2(i;5)  = \emptyset \mbox{ and } 5\nmid D 
\end{array} \right. $}, \\ 
 & & where we put \\ 
 & & $C_3=\left\{ \begin{array}{ccl} 
{[1,0,0,-1,0;5]_k}&\cdots & \mbox{if }j\equiv 0 \mbox{ mod } 10 \\ 
{[-1,1,0,0,0;5]_k}&\cdots & \mbox{if }j\equiv 2 \mbox{ mod } 10 \\ 
0&\cdots & \mbox{if }j\equiv 4 \mbox{ mod } 10 \\ 
{[0,0,0,1,-1;5]_k}&\cdots & \mbox{if }j\equiv 6 \mbox{ mod } 10 \\ 
{[0,-1,0,0,1;5]_k}&\cdots & \mbox{if }j\equiv 8 \mbox{ mod } 10 
\end{array} \right.$.   \\ 
 & & \\  
$H_{11}$ & $=$ & $\displaystyle \frac{C_4}{2^3}\times \prod _{p\mid D,p\not=2}2\times \prod _{p\in D_1(7;8)} 2 
\times \left\{ \begin{array}{ccl} 
0&\cdots &\mbox{if } D(1;8)\sqcup D_2(7;8) \not= \emptyset \\ 
1&\cdots &\mbox{otherwise}  
\end{array} \right. $, \\ 
 & & where we put \\ 
 & & $C_4=\left\{ \begin{array}{ccl} 
{[1,0,0,-1;4]_k}&\cdots & \mbox{if }j\equiv 0 \mbox{ mod } 8 \\ 
{[-1,1,0,0;4]_k}&\cdots & \mbox{if }j\equiv 2 \mbox{ mod } 8 \\ 
{[-1,0,0,1;4]_k}&\cdots & \mbox{if }j\equiv 4 \mbox{ mod } 8 \\ 
{[1,-1,0,0;4]_k}&\cdots & \mbox{if }j\equiv 6 \mbox{ mod } 8 
\end{array} \right.$.   \\    
 & &  \\ 
$H_{12}$ & $=$ & $\displaystyle 2^{-2}3^{-1}(-1)^{j/2+k}[1,-1,0;3]_j\times \prod _{p\mid D}2\times  \prod _{p\in D_1(11;12)}2\times A$ \\[20pt]  
 & & \hspace{5pt} $\displaystyle +2^{-2}3^{-1}(-1)^{j/2}[0,-1,1;3]_{j+2k}\times \prod _{p\mid D}2\times  \prod _{p\in D_1(11;12)}2\times B$, \\ 
 & & where $A$ and $B$ are defined as follows. \\ 
 & & (i) If $D(1;12)\sqcup D_2(11;12)\not= \emptyset$, then $A=B=0$. \\ 
 & & (ii) If $D(1;12)\sqcup D_2(11;12)= \emptyset$, then $A$ (resp.$B$) are given by the following table, \\  
 & & depending on  the conditions of $D$, $D_1$ and $D_2$. \vspace{4.62pt} \\ 
 & & \hspace{2cm} \begin{tabular}{|c|c|c|c|} \hline 
 & case (I) & case (II) & case (III)  \\ \hline 
$2\nmid D,3\nmid D$ & 0 & 1/2 & 1 \\ \hline 
$2\nmid D,3\mid D_1$ & 1/2 & 3/4 & 1 \\ \hline 
$2\nmid D,3\mid D_2$ & 0 & 1/4 & 1/2 \\ \hline 
$2\mid D_1,3\nmid D$ & 1 & 3/4 & 1/2 \\ \hline 
$2\mid D_1,3\mid D_1$ & 5/4 & 9/8 & 1 \\ \hline 
$2\mid D_1,3\mid D_2$ & 1/2 & 3/8 & 1/4 \\ \hline 
$2\mid D_2,3\nmid D$ & 1/2 & 1/4 & 0 \\ \hline 
$2\mid D_2,3\mid D_1$ & 1/2 & 3/8 & 1/4 \\ \hline 
$2\mid D_2,3\mid D_2$ & 1/4 & 1/8 & 0 \\ \hline 
\end{tabular} \vspace{4.62pt}  \\ 
 & & where case (I), (II) and (III) are given as follows: \\ 
 & & \hspace{2cm} $\left\{ \begin{array}{l} 
\mbox{(I) }D_1(11;12)=\emptyset \mbox{ and } \sharp D(5;12)=\mbox{even (resp. odd)} \\ 
\mbox{(II) }D_1(11;12)\not= \emptyset \\ 
\mbox{(III) }D_1(11;12)=\emptyset \mbox{ and } \sharp D(5;12)=\mbox{odd (resp. even)} 
\end{array} \right. $.   \\ 
 & & \\ 
$I_1$ & $=$ & $\displaystyle \frac{j+1}{2^3\cdot 3}\prod _{p\mid D} (p-1)$ \\ 
 & & \\ 
$I_2$ & $=$ & $\displaystyle -\frac{(-1)^{\frac{j}{2}}}{2^3} \prod _{p\mid D} \Big( 1-\left( \frac{-1}{p}\right) \Big)$ \\ 
 & & \\ 
$I_3$ & $=$ & $\displaystyle -\frac{[1,-1,0;3]_j}{2\cdot 3} \prod _{p\mid D} \Big( 1-\left( \frac{-3}{p}\right) \Big)$. 
\end{longtable}

%%%%%%%%%%%%%%%%%%%%%%%%%%%%%%%%%%%%%%%%%%%%%%%%%%%%%%%%%%%%%%%%%%%%%%%%%%
%%%%%%%%%%%%%%%%%%%%%%%%%%%%%%%%%%%%%%%%%%%%%%%%%%%%%%%%%%%%%%%%%%%%%%%%%%
%
\section{The contribution of semi-simple conjugacy classes} 
\label{sec:elliptic} 
%
%%%%%%%%%%%%%%%%%%%%%%%%%%%%%%%%%%%%%%%%%%%%%%%%%%%%%%%%%%%%%%%%%%%%%%%%%%
%%%%%%%%%%%%%%%%%%%%%%%%%%%%%%%%%%%%%%%%%%%%%%%%%%%%%%%%%%%%%%%%%%%%%%%%%% 

%We prove our main theorem (Theorem \ref{thm:main}) in the remainder of this paper. 
In this section, we evaluate $I(\Gamma^{(e)})_{k,j}$, i.e. 
the contributions of torsion elements (cf. section \ref{seintro}). 
The principal polynomials of torsion elements of $G$ $=$ $U(2;B)$ are as follows,  
and each $H_i$ in Theorem \ref{thm:main} means the contribution    
of $\Gamma$-conjugacy classes whose principal polynomials are of the form 
$f_i(\pm x)$.   
\begin{center} 
\scalebox{1}[0.9]{ 
\begin{tabular}{ll}  
$f_1(x) = (x-1)^4$, & $f_1(-x) = (x+1)^4$, \\ 
$f_2(x) = (x-1)^2(x+1)^2$, &  \\ 
$f_3(x) = (x-1)^2(x^2+1)$, & $f_3(-x) = (x+1)^2(x^2+1)$, \\ 
$f_4(x) = (x-1)^2(x^2+x+1)$, & $f_4(-x) = (x+1)^2(x^2-x+1)$, \\ 
$f_5(x) = (x-1)^2(x^2-x+1)$, & $f_5(-x) = (x+1)^2(x^2+x+1)$, \\ 
$f_6(x) = (x^2+1)^2$, &   \\ 
$f_7(x) = (x^2+x+1)^2$, & $f_7(-x) = (x^2-x+1)^2$, \\ 
$f_8(x) = (x^2+1)(x^2+x+1)$, & $f_8(-x) = (x^2+1)(x^2-x+1)$, \\ 
$f_9(x) = (x^2+x+1)(x^2-x+1)$, &    \\ 
$f_{10}(x) = x^4+x^3+x^2+x+1$, & $f_{10}(-x) = x^4-x^3+x^2-x+1$, \\ 
$f_{11}(x) = x^4+1$, &  \\ 
$f_{12}(x) = x^4-x^2+1$.  &  
\end{tabular}
}  
\end{center} 
\noindent 
We will evaluate each $H_i$ in subsection \ref{subsec:H1} -- \ref{subsec:H12}.  
The method was developed by Hashimoto and Ibukiyama 
\cite{Has80}, \cite{HI80}, \cite{HI82}, \cite{HI83}, \cite{Has83}, \cite{Has84}. 
We quote the formula for $I(\Gamma ^{(e)})_{k,j}$. 

\begin{theorem} \label{thm:elliptic} 
\begin{align*} 
I(\Gamma^{(e)})_{k,j}= c_{k,j} \sum _{\{ g\} _G} J'_0(g) 
\sum _{L_G(\Lambda )} 
M_G(\Lambda ) \prod _p c_p(g,R_p,\Lambda _p), 
\end{align*}  
where notations are as follows: 
\end{theorem}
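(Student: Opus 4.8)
The plan is to realize $I(\Gamma^{(e)})_{k,j}$ as the elliptic term of a Selberg-type trace formula computing $\dim_{\bC} S_{k,j}(\Gamma)$. Since the hermitian form $f$ has isotropic vectors over $\bQ$, the group $G=U(2;B)$ admits proper rational parabolic subgroups, so $\Gamma\backslash\fH_2$ has finite volume but is non-compact; accordingly I would start from the (regularized) trace of the integral operator attached to a pseudo-coefficient $\varphi_{k,j}$ of the holomorphic discrete series of $Sp(2;\bR)$ whose Harish-Chandra parameter matches $\rho_{k,j}$. By the standard identity relating the dimension of the space of holomorphic cusp forms to the multiplicity of this discrete series in $L^2_{\mathrm{disc}}(\Gamma\backslash G)$, $\dim_{\bC} S_{k,j}(\Gamma)$ equals the spectral side; the geometric side decomposes along $\Gamma$-conjugacy classes, and $I(\Gamma^{(e)})_{k,j}$ is by definition the sum of the contributions of the torsion (elliptic) classes. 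The use of a pseudo-coefficient is what forces the hyperbolic contributions to drop out, leaving only elliptic and unipotent terms (the latter treated separately via Wakatsuki's formula).

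For a fixed elliptic class $\{g\}_G$ the contribution factors as a product of an archimedean orbital integral and a finite-adelic covolume. At the archimedean place I would evaluate the orbital integral of $\varphi_{k,j}$ over the conjugacy class of $g$; by Harish-Chandra's formula for the discrete-series character this is a ratio of Weyl-denominator type expressions in the eigenvalues of $g$, producing the weight-dependent constant $c_{k,j}$ together with the factor $J'_0(g)$, which records the value of the character of $\rho_{k,j}$ at the compact Cartan. Because the character of a holomorphic discrete series vanishes on the regular hyperbolic set, the same computation simultaneously yields $I(\Gamma^{(h)})_{k,j}=0$, in agreement with the splitting of $\Gamma$ described in the introduction.

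The core of the argument is the arithmetic reorganization of the remaining factor, the covolume $\mathrm{vol}(\Gamma_g\backslash G_g)$ of the centralizer, summed over the $\Gamma$-classes with a fixed characteristic polynomial. I would group the torsion classes according to their principal polynomial $f_i(\pm x)$, so that $\bQ[g]$ is a fixed \'etale $\bQ$-algebra and $G_g$ is the associated (quaternionic or CM-type) torus; this torus is anisotropic modulo center, so each $\Gamma_g\backslash G_g$ is compact with finite covolume. Passing to the adelic double-coset description of the centralizer $G_g$ and using the lattice description of $\Gamma$ from subsection~\ref{subsec:def}, the weighted number of $\Gamma$-classes inside one $G(\bQ)$-class becomes a class-number-times-mass quantity for an order attached to $\bQ[g]$, and this global quantity factors into the genus sum $\sum_{L_G(\Lambda)} M_G(\Lambda)\prod_p c_p(g,R_p,\Lambda_p)$, where $M_G(\Lambda)$ is the mass of the genus and each $c_p$ is the local count of admissible lattices at $p$ fixed by $g$, i.e. a local orbital integral.

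I expect the genuinely hard step to be this local factorization together with the explicit determination of the $c_p(g,R_p,\Lambda_p)$: one must identify, prime by prime, the local order $R_p$ generated by $g$ and the maximal lattice $(\fA,\fO)$ localized at $p$ of the prescribed type, and then count the admissible local lattices. This is delicate precisely at the ramified primes $p\mid D$, where the two possible local types $(\fO_p,\fO_p)$ and $(\pi\fO_p,\fO_p)$ must be distinguished according as $p\mid D_1$ or $p\mid D_2$. It is exactly this local computation that Hashimoto and Ibukiyama carried out in \cite{HI80} and \cite{HI83}; invoking their local data reduces the remaining work to the combinatorial assembly of the $c_p$, the masses $M_G(\Lambda)$, and the archimedean factors that produces each individual $H_i$.
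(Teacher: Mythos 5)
The paper does not actually prove Theorem \ref{thm:elliptic}: it is quoted from Hashimoto and Ibukiyama (\cite{HI80}, \cite{Has83}), where it is derived from the Selberg/Godement trace formula applied to the reproducing kernel of the holomorphic discrete series of weight $\rho_{k,j}$. Your overall strategy --- geometric side organized by $\Gamma$-conjugacy classes, elliptic term factoring into an archimedean orbital integral times an adelic mass, the adelic mass reorganized by genera of orders in $Z(g)$ with local counts $c_p$ --- is exactly the structure of that derivation, so at the level of architecture your sketch is the right one.

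There are, however, two concrete inaccuracies. First, you justify finiteness of the centralizer covolumes by claiming that $G_g$ is ``anisotropic modulo center,'' hence $\Gamma_g\backslash G_g$ compact. This is false for precisely the classes that contribute $H_1$ through $H_5$: for $g=\pm 1_4$ the centralizer is all of $Sp(2;\bR)$; for the class with principal polynomial $(x-1)^2(x+1)^2$ one has $Z(g)\simeq B\oplus B$ and $C_0(g;Sp(2;\bR))\simeq SL(2;\bR)\times SL(2;\bR)$; for $(x\mp1)^2(x^2+1)$, $(x\mp1)^2(x^2\pm x+1)$ the centralizer contains an $SL(2;\bR)$ factor, and even for $(x^2+1)^2$ with $Z_0(g)$ indefinite the quotient is non-compact. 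The quotients still have finite volume (they are arithmetic quotients of reductive groups), but your stated reason does not cover the non-regular torsion elements, which are the bulk of the formula. Second, and relatedly, describing $J_0'(g)$ as ``the value of the character of $\rho_{k,j}$ at the compact Cartan'' is only correct for regular elliptic $g$ (the classes behind $H_9$--$H_{12}$, where $C_0(g;Sp(2;\bR))=\{1_4\}$). For the non-regular classes $J_0'(g)$ is by definition the integral $\int_{C_0(g;Sp(2;\bR))\backslash\fH_2}H_g^{k,j}(\hat Z)\,d\hat Z$, whose evaluation is a separate analytic problem (Langlands \cite{Lan63}; see Remark \ref{rem:elliptic}(3)), not an instance of the Weyl character formula. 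Neither point invalidates the theorem, but both are steps your argument asserts rather than proves, and they are exactly the places where the quoted references do real work.
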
 
\hspace{-13.86pt} 
(i)\ The first sum is extended over the $G$-conjugacy classes $\{ g\} _G$  
of torsion elements of $G$ which satisfies $\{ g\} _G\cap\Gamma \neq \emptyset$. \\ 
(ii)\ The second sum is extended over the $G$-genus $L_G(\Lambda)$ of $\bZ$-orders 
in $Z(g)$ for each $g\in G$. 
Here $Z(g)$ is the commutor algebra of $g$ in $M(2;B)$. 
The $G$-genus $L_G(\Lambda)$ represented by a $\bZ$-order $\Lambda$ of $Z(g)$ 
consists of all $\bZ$-orders in $Z(g)$ 
which are $(Z(g)^{\times}\cap G)\otimes _{\bQ}\bQ_p$-conjugate to 
$\Lambda \otimes _{\bZ}\bZ_p$ for all $p$. \\ 
(iii)\ The constant $c_{k,j}$ for $k,j$ are defined by 
\begin{align*} 
c_{k,j} := 2^{-6}\pi ^{-3}(k-2)(j+k-1)(j+2k-3). 
\end{align*} 
(iv)\ We define $J_0'(g)$ for each $\{ g\} _G$ as follows. 
We put 
\begin{align*} 
H_g^{k,j}(Z) &:= 
\mathrm{tr}\left[ \rho_{k,j}(CZ+D)^{-1}\rho_{k,j}\left( \frac{g\langle Z\rangle -\overline{Z}}{2i}\right) ^{-1}\rho_{k,j}(Y)\right], \\ 
J_0(g) &:= 
\int _{C_0(g;Sp(2;\bR ))\backslash \fH _2} H_g^{k,j}(\hat{Z})d\hat{Z},  
\end{align*} 
where $dZ=\det (Y)^{-3}dXdY$, $dX=dx_1dx_{12}dx_2$, $dY=dy_1dy_{12}dy_2$  
for $Z=X+iY \in \fH_2$, 
$X=\begin{pmatrix} x_1&x_{12}\\x_{12}&x_2 \end{pmatrix}$, 
$Y=\begin{pmatrix} y_1&y_{12}\\y_{12}&y_2 \end{pmatrix}$, 
and $d\hat{Z}$ is an invariant measure on $C_0(g;Sp(2;\bR ))\backslash \fH_2$ 
induced from $dZ$ and a Haar measure on $C_0(g;Sp(2;\bR ))$. 
The definition of $C_0(g;Sp(2;\bR))$ is given for each $g$ 
in each subsection \ref{subsec:H1} -- \ref{subsec:H12}. 
We define $J_0'(g):=J_0(g)$ if $-1_4\not\in C_0(g;Sp(2;\bR ))$, and 
$J'_0(g):=2^{-1}\cdot J_0(g)$ if $-1_4\in C_0(g;Sp(2;\bR ))$. \\ 
(v)\ We define $M_G(\Lambda)$ for each $\{ g\} _G$ and $L_G(\Lambda)$ as follows. 
We decompose the group $(Z(g)^{\times}\cap G)_{\mathbb{A}}$ into 
the disjoint union 
\begin{align*} 
(Z(g)^{\times}\cap G)_{\mathbb{A}}=\sqcup _{i=1}^h 
(Z(g)^{\times}\cap G) y_i(\Lambda_{\mathbb{A}}^{\times} \cap G_{\mathbb{A}}), 
\end{align*} 
where $\Lambda_{\mathbb{A}}=\Lambda \otimes _{\bZ}\bZ _{\mathbb{A}}$. 
We put $\Lambda _i=y_i\Lambda y_i^{-1}=\cap _p((y_i)_p\Lambda _p(y_i)_p^{-1}\cap Z(g))$ 
and define 
\begin{align*} 
M_G(\Lambda) := \sum _{i=1}^h \mathrm{vol}\big( (\Lambda _i^{\times}\cap G)\backslash C_0(g;Sp(2;\bR ))\big). 
\end{align*} 
(vi)\ We define $c_p(g,R_p,\Lambda_p)$ for each $\{ g\} _G$, $L_G(\Lambda)$ and $p$ as 
\begin{align*} 
c_p(g,R_p,\Lambda _p)=\sharp \big( (Z(g)^{\times}\cap G)_p 
\backslash M_p(g,R_p,\Lambda _p)/(R_p^{\times}\cap G_p)\big) , 
\end{align*} 
where 
\begin{align*} 
R_p:=M(2;\fO _p)\mbox{ if }p\nmid  D_2\mbox{ and }
R_p:=\begin{pmatrix} \fO _p&\pi ^{-1}\fO _p\\ \pi \fO _p&\fO _p \end{pmatrix} 
\mbox{ if }p\mid D_2, 
\end{align*} 
\begin{align*} 
M_p(g,R_p,\Lambda _p) := \left\{ x\in G_p \left| 
\begin{array}{l} 
x^{-1}gx \in R_p,\mbox{ and } \\ 
Z(g)_p\cap xR_px^{-1} \mbox{ is } (Z(g)^{\times}\cap G)_p\mbox{-conjugate to } 
\Lambda _p 
\end{array} 
\right. \right\} . 
\end{align*} 
If $M_p(g,R_p,\Lambda_p)=\emptyset$, then we put $c_p(g,R_p,\Lambda_p)=0$. 
\ \\ 
\ 

\begin{remark} \label{rem:elliptic} 
We give some remarks about Theorem \ref{thm:elliptic}. \\ 
(1)The crucial work in using Theorem \ref{thm:elliptic} is to evaluate 
$c_p(g,R_p,\Lambda_p)$. 
This work has been completed by Hashimoto and Ibukiyama in \cite{HI80} and \cite{HI83}, so we have only to combine the deta depending on the cases. \\ 
(2)We need to determine $G$-conjugacy classes which appear in the firsr sum 
in Theorem \ref{thm:elliptic}. 
It is known that $\{g\}_G \cap \Gamma \not= \emptyset$ if and only if 
$\{g\}_{G_p} \cap R_p \not= \emptyset$ for all $p$. 
(cf. Theorem 1-3 in \cite{Has83}). 
We can obtain the result by using \cite[section 2]{HI80},\cite{Has84} and 
the results of $c_p$ mentioned above. \\ 
(3)The integral $J_0'(g)$ depends only on $Sp(2;\bR)$-conjugacy classes. 
Langlands \cite{Lan63} gave a formula for $J_0(g)$. 
We can evaluate $J_0'(g)$ by applying  
explicit formulae in \cite{Has83} and \cite{Wak}. 
\end{remark} 

We will evaluate each $H_i$ in subsection \ref{subsec:H1} -- \ref{subsec:H12}. 
We denote by $G[f_i]$ the set of torsion elements of $G$ whose principal polynomials 
are $f_i(x)$. 
For $i=1,3,4,5,7,8,10$, we have only to evaluate the contribution of $G[f_i]$ 
and double it to obtain $H_i$ 
because the contribution of $g$ is equal to that of $-g$. 
We use the notation: \\   
\[ \alpha(\theta_1,\theta_2)= 
\begin{pmatrix} 
\cos\theta_1 & 0 & \sin\theta_1 & 0 \\ 
0 & \cos\theta_2 & 0& \sin\theta_2  \\ 
-\sin\theta_1 & 0 & \cos\theta_1 & 0 \\ 
0 & -\sin\theta_2 & 0& \cos\theta_2  
\end{pmatrix}. \]

%%%%%%%%%%%%%%%%%%%%%%%%%%%%%%%%%%%%%%%%%%%%%%%%%%%%%%%%%%%%%%%%%%%%%%%%%%
%
\subsection{The contribution $\mathbf{H_1}$} \label{subsec:H1} 
%
%%%%%%%%%%%%%%%%%%%%%%%%%%%%%%%%%%%%%%%%%%%%%%%%%%%%%%%%%%%%%%%%%%%%%%%%%%

In this subsection, we consider the contribution of $\pm 1_4$. 
If $\gamma=\pm 1_4$, we have 
$C(\gamma ;Sp(2;\bR))=C_0(\gamma ;Sp(2;\bR))=Sp(2;\bR )$, 
$C(\gamma ;\Gamma )=C_0(\gamma ;\Gamma )=\Gamma$ and 
$H_{\gamma}^{k,j}(Z)=1$, 
so $J_0'(\gamma)=\frac{1}{2}\int _{Sp(2;\bR)\backslash\fH_2}d\hat{Z}$. 
Also, we have 
\begin{align*} 
c_p(\gamma ,R_p,\Lambda _p)=\left\{ \begin{array}{l} 
1\cdots \mbox{ if }\Lambda _p\sim R_p \\ 
0\cdots \mbox{ otherwise.} 
\end{array} \right. 
\end{align*}  
Hence from Theorem \ref{thm:elliptic} we have 
\begin{align*} 
H_1=2^{-6}\pi ^{-3}(k-2)(j+k-1)(j+2k-3) 
\cdot \mathrm{vol}(Sp(2;\bR )\backslash \fH _2)
\cdot \mathrm{vol}(\Gamma\backslash Sp(2;\bR )). 
\end{align*} 
We have only to multiple the value $H_1$ in the case of $D_2=1$ in \cite{Wak} 
by $\prod _{p \mid D_2} \frac{p+1}{p^2+1}$ 
because we have the indexes as follows: 
\begin{align*} 
\left[ G_p\cap \begin{pmatrix} \fO _p & \fO _p \\ \fO _p & \fO _p \end{pmatrix}^{\times} : G_p\cap \begin{pmatrix} \fO _p&\fO _p\\ \pi\fO _p&\fO _p \end{pmatrix}^{\times} \right] &= p+1, \\  
\left[ G_p\cap \begin{pmatrix} \fO _p & \pi ^{-1}\fO _p \\ \pi \fO _p & \fO _p \end{pmatrix}^{\times} : G_p\cap \begin{pmatrix} \fO _p&\fO _p\\ \pi\fO _p&\fO _p \end{pmatrix}^{\times} \right] &= p^2+1. 
\end{align*} 
Hence we obtain $H_1$ as in Theorem \ref{thm:main}.

%%%%%%%%%%%%%%%%%%%%%%%%%%%%%%%%%%%%%%%%%%%%%%%%%%%%%%%%%%%%%%%%%%%%%%%%%%
%
\subsection{The contribution $\mathbf{H_2}$} 
%
%%%%%%%%%%%%%%%%%%%%%%%%%%%%%%%%%%%%%%%%%%%%%%%%%%%%%%%%%%%%%%%%%%%%%%%%%%

In this subsection, we evaluate the contribution of $G[f_2]$, 
where $f_2(x)=(x-1)^2(x+1)^2$. 
The set $G[f_2]$ consists of 
only one $G$-conjugacy class represented by an element $g$.  
We have $Z(g)\simeq B\oplus B$. 
We fix $g$ and this isomorphism until the end of this subsection. 
We put 
\[ L:=\{ (x,y)\in \fO\oplus\fO \ |\ x-y \in \pi \fO_2 \}, \]  
where $\pi$ is a prime element of $\fO_2$. 
We have the following proposition. 
\begin{proposition} \label{prop:H2} \ \\ 
$(1)$\ The class $\{ g\} _G$ appears in the first sum of Theorem \ref{thm:elliptic}, 
i.e. $\{ g\} _G\cap \Gamma\not=\emptyset$, 
if and only if $D_2=1$ or $2$. \\ 
$(2)$\ We assume $D_2=2$. 
If $\Lambda$ is a $\bZ$-order of $Z(g)$ belonging to the same $G$-genus as $L$, 
then we have $\prod_p c_p(g,R_p,\Lambda _p)=1$.  
If $\Lambda$ does not belong to the same $G$-genus as $L$, 
then $\prod_p c_p(g,R_p,\Lambda_p)=0$. 
\end{proposition}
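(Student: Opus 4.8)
The plan is to compute everything locally, prime by prime, following the adelic reduction made available by Theorem~\ref{thm:elliptic}. The element $g\in G[f_2]$ has principal polynomial $(x-1)^2(x+1)^2$, so its commutor algebra is $Z(g)\simeq B\oplus B$, with the two factors corresponding to the eigenvalues $+1$ and $-1$. I would first fix a concrete representative $g$ and the isomorphism $Z(g)\simeq B\oplus B$; under this identification the intersection $Z(g)^{\times}\cap G$ and its local completions $(Z(g)^{\times}\cap G)_p$ become explicit subgroups of $(B_p\oplus B_p)^{\times}$, and the condition $f(xg,yg)=f(x,y)$ translates into a norm/conjugation condition on the two $B$-components. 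Establishing this dictionary is the necessary setup for both parts.

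For part $(1)$, by Remark~\ref{rem:elliptic}(2) the class $\{g\}_G$ meets $\Gamma$ if and only if $\{g\}_{G_p}\cap R_p\neq\emptyset$ for every $p$, where $R_p=M(2;\fO_p)$ for $p\nmid D_2$ and $R_p=\left(\begin{smallmatrix}\fO_p&\pi^{-1}\fO_p\\ \pi\fO_p&\fO_p\end{smallmatrix}\right)$ for $p\mid D_2$. The plan is to check this local solvability prime by prime. For $p\nmid D$ the algebra $B_p$ splits and a conjugate of $g$ lands in $M(2;\fO_p)$ by a direct matrix computation, so those primes impose no constraint. The real work is at $p\mid D$: here $B_p$ is the division quaternion algebra, and I would examine whether the semisimple element with eigenvalues $\pm1$ (each of multiplicity two) can be conjugated into the maximal order lattice $R_p$ attached to the genus. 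The expectation, to be verified, is that this succeeds precisely when $D_2\in\{1,2\}$, i.e.\ when at most the prime $2$ is ramified with the non-maximal ($D_2$) normalization; the obstruction at an odd ramified $p\mid D_2$ comes from the incompatibility of the $\pm1$-decomposition with the twisted lattice $\pi\fO_p$. This case analysis at the ramified primes is where I expect the argument to require the most care.

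For part $(2)$, assuming $D_2=2$, the task is to evaluate the product $\prod_p c_p(g,R_p,\Lambda_p)$ for $\bZ$-orders $\Lambda$ of $Z(g)\simeq B\oplus B$. Since $c_p$ is defined as a cardinality of a double-coset space $(Z(g)^{\times}\cap G)_p\backslash M_p(g,R_p,\Lambda_p)/(R_p^{\times}\cap G_p)$, I would invoke the explicit local computations of Hashimoto and Ibukiyama in \cite{HI80},\cite{HI83}, which are precisely designed to output these local factors. The strategy is to show that for the genus of the specific order $L=\{(x,y)\in\fO\oplus\fO\mid x-y\in\pi\fO_2\}$ each local factor equals $1$, so the product is $1$, while any order $\Lambda$ not in this genus forces $M_p(g,R_p,\Lambda_p)=\emptyset$ at some $p$ and hence $c_p=0$ there, killing the product. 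The choice of $L$ is dictated by the $D_2=2$ normalization: the gluing condition $x-y\in\pi\fO_2$ at the prime $2$ is exactly the local order that is compatible with the twisted lattice $R_2$, whereas at all other primes the natural order $\fO_p\oplus\fO_p$ is forced. I would verify the factor-equals-one claim at $p=2$ by matching against the relevant table in \cite{HI83}, and at odd $p$ by the standard split/ramified dichotomy, concluding that the genus of $L$ is the unique genus contributing.

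The main obstacle I anticipate is the local analysis at the primes dividing $D$ — in particular pinning down, at $p=2$ under the $D_2=2$ normalization, exactly which $G$-genus of orders in $B_2\oplus B_2$ is compatible with $R_2$, and confirming via the Hashimoto--Ibukiyama tables that the single compatible genus yields $c_2=1$. Everything at the split primes $p\nmid D$ should reduce to routine matrix conjugation, and the contribution from $+g$ and $-g$ need not be separately treated here since $f_2(-x)=f_2(x)$ already identifies the two.
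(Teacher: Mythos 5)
Your proposal follows the same route as the paper: both parts are reduced to prime-by-prime local computations via the local-global criterion of Remark \ref{rem:elliptic}(2), and the local factors are read off from Hashimoto--Ibukiyama (Proposition 13 of \cite{HI80} and Proposition 2.4 of \cite{HI83}), which is exactly what the paper's (very terse) proof does. Your identification of the gluing condition $x-y\in\pi\fO_2$ as the unique contributing genus and your observation that $f_2(-x)=f_2(x)$ obviates a doubling step are both consistent with the paper.
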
 
\begin{proof} 
We can prove (1) and the latter part of (2) easily by \cite[Proposition 2.4]{HI83}. 
If $D_2=2$ and $\Lambda$ is a $\bZ$-order of $Z(g)$ belonging to the same $G$-genus 
as $L$, 
then it follows from 
\cite[Proposition 13]{HI80} and \cite[Proposition 2.4]{HI83} 
that $c_p(g,R_p,\Lambda_p)=1$ for any $p$. 
\end{proof} 
From Proposition \ref{prop:H2}, we have $H_2=0$ if $D_2\not= 1,2$. 
In the case where $D_2=1$, $H_2$ has been evaluated in \cite{Has84} and \cite{Wak}. 
Hereafter, we assume $D_2=2$. 
From Proposition \ref{prop:H2}, we have 
\[ H_2=c_{k,j}\cdot J_0'(g)\cdot M_G(L). \]  
We see that $g$ is $Sp(2;\bR)$-conjugate to $\alpha(\pi,0)$ and 
\[ C_0(\alpha (\pi ,0);Sp(2;\bR )) = 
\left\{ 
\left. 
\begin{pmatrix} a&0&b&0\\0&a'&0&b'\\c&0&d&0\\0&c'&0&d' \end{pmatrix} 
\right| 
\begin{array}{l} 
ad-bc=1\\a'd'-b'c'=1 
\end{array} 
\right\} , 
\] 
\[ 
J_0'(g)=\frac{1}{2}J_0(\alpha (\pi ,0)) 
={c_{k,j}}^{-1}2^{-7}\pi ^{-4}(-1)^k(j+k-1)(k-2) 
\] 
if $j$ is even. (cf. (b-5) in \cite{Wak}). 
If we put $L_0=\fO\oplus\fO$, then we have 
\begin{align*} 
M_G(L) &= \mathrm{vol}((L^{\times}\cap G)\backslash C(g;Sp(2;\bR ))) \\ 
       &= [L_0^{\times} \cap G : L^{\times} \cap G] \cdot \mathrm{vol} ((L_0^{\times}\cap G)\backslash C(g;Sp(2;\bR ))) \\ 
       &= 3^{-1}\pi ^4\prod _{p\mid D} (p-1)^2. 
\end{align*} 
(cf. (3.6),(3.7) of \cite{Has84}). 
Hence we can obtain $H_2$ as in Theorem \ref{thm:main}.

%%%%%%%%%%%%%%%%%%%%%%%%%%%%%%%%%%%%%%%%%%%%%%%%%%%%%%%%%%%%%%%%%%%%%%%%%%
%
\subsection{The contribution $\mathbf{H_3}$} 
%
%%%%%%%%%%%%%%%%%%%%%%%%%%%%%%%%%%%%%%%%%%%%%%%%%%%%%%%%%%%%%%%%%%%%%%%%%%

In this subsection, we evaluate the contribution of $G[f_3]$, 
where $f_3(x)=(x-1)^2(x^2+1)$. 
We have only to double it to obtain $H_3$. 
Note that $G[f_3]\neq\emptyset$ if and only if 
$\left( \frac{-1}{p}\right) \neq1$ for any prime divisor $p$ of $D$. 
Hereafter, we assume that $G[f_3]\neq\emptyset$. 
The set $G[f_3]$ consists of two $G$-conjugacy classes represented by 
$g$ and $g^{-1}$ for an element $g$. 
We have $Z(g)\simeq B\oplus F$ with $F=\bQ(\sqrt{-1})$. 
We fix $g$ and this isomorphism until the end of this subsection. 
We put 
\[ L:=\{ (x,y)\in \fO\oplus\cO \ |\ x-y \in \pi {\cO}_2 \}, \]  
where $\cO$ is the ring of integers of $F$ and 
$\pi$ is a prime element of ${\cO}_2$. 
Then we have the following proposition. 
\begin{proposition} \label{prop:H3} \ \\ 
$(1)$\ The classes $\{ g\} _G$ and $\{ g^{-1}\} _G$ appear 
in the first sum of Theorem \ref{thm:elliptic} 
if and only if $D_2=1$ or $2$. \\ 
$(2)$\ We assume $D_2=2$. 
If $\Lambda$ is a $\bZ$-order of $Z(g)$ belonging to the same $G$-genus as $L$, 
then we have 
\[ \prod _p c_p(g,R_p,\Lambda_p)=\prod _p c_p(g^{-1},R_p,\Lambda_p)
=2^{\sharp D_1(3;4)}. \]  
If $\Lambda$ does not belong to the same $G$-genus as $L$, 
then $\prod_p c_p(g,R_p,\Lambda_p)=0$. 
\end{proposition}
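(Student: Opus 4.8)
The plan is to reduce the statement to the local computations of Hashimoto and Ibukiyama, following the pattern already established in the proof of Proposition \ref{prop:H2}. The decomposition $Z(g)\simeq B\oplus F$ with $F=\bQ(\sqrt{-1})$ means that at each prime $p$ the local factor $c_p(g,R_p,\Lambda_p)$ depends on the behavior of both $B_p=B\otimes_\bQ\bQ_p$ and $F_p=F\otimes_\bQ\bQ_p$. First I would treat part (1): the criterion $\{g\}_G\cap\Gamma\neq\emptyset$ is equivalent, by Remark \ref{rem:elliptic}(2), to $\{g\}_{G_p}\cap R_p\neq\emptyset$ for every $p$. I would verify this local condition prime by prime using \cite[Proposition 2.4]{HI83}. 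The outcome is that the constraint coming from the split/ramified structure of $F$ at $2$ forces $D_2\in\{1,2\}$: when $p\mid D_2$ with $p$ odd, the embedding of $\cO_p$ into the Eichler-type order $R_p$ (with the nontrivial $\pi$-twist) is obstructed, so only $p=2$ can divide $D_2$.

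Next I would address part (2), assuming $D_2=2$. The key is to factor $\prod_p c_p$ into its local pieces and evaluate each using \cite[Proposition 13]{HI80} together with \cite[Proposition 2.4]{HI83}. For primes $p\nmid D$ and for $p\mid D_1$ with $\left(\frac{-1}{p}\right)=-1$ split versus inert in $F$, the local factor is controlled by whether $F_p$ is a field or splits; the value $2$ arises precisely at the inert primes, which for $F=\bQ(\sqrt{-1})$ are exactly the primes $p\equiv 3\bmod 4$, i.e.\ $p\in D_1(3;4)$. Collecting the local factors then yields the product $2^{\sharp D_1(3;4)}$. Since $g$ and $g^{-1}$ are interchanged by an automorphism of $Z(g)$ preserving the order structure (complex conjugation on the $F$-factor), the equality $\prod_p c_p(g,R_p,\Lambda_p)=\prod_p c_p(g^{-1},R_p,\Lambda_p)$ follows formally. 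Finally, the vanishing statement for $\Lambda$ outside the $G$-genus of $L$ is immediate from the definition of $c_p$, since $M_p(g,R_p,\Lambda_p)=\emptyset$ whenever $\Lambda_p$ is not $(Z(g)^\times\cap G)_p$-conjugate to the prescribed local order.

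I expect the main obstacle to be the careful bookkeeping at the prime $2$. Because $F=\bQ(\sqrt{-1})$ is ramified at $2$ and $D_2=2$ forces $B$ to be ramified there as well, the local order $L_2$ defined by the congruence $x-y\in\pi\cO_2$ is genuinely delicate, and I would need to check directly from \cite[Proposition 13]{HI80} that $c_2(g,R_2,L_2)=1$ in this mixed ramified situation rather than contributing an extra factor of $2$. Verifying that the prime $2$ contributes trivially (so that only the odd inert primes $D_1(3;4)$ appear in the exponent) is the step that requires the most care, and it is where the choice of $L$ with its specific $2$-adic congruence condition becomes essential.
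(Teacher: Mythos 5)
Your proposal is correct and follows essentially the same route as the paper: both part (1) and the single-genus/vanishing statement are reduced to \cite[Proposition 2.4]{HI83} via the local criterion of Remark \ref{rem:elliptic}(2), and the product $2^{\sharp D_1(3;4)}$ is obtained by combining the local values $c_p=2$ at primes $p\mid D_1$ with $\left(\frac{-1}{p}\right)=-1$ and $c_p=1$ elsewhere (including the ramified prime $2$), exactly as in the paper. The only discrepancy is the citation: for $Z(g)\simeq B\oplus F$ the relevant local computation is \cite[Proposition 14]{HI80} (together with \cite[Proposition 2.4]{HI83}), not \cite[Proposition 13]{HI80}, which is the one used for the $Z(g)\simeq B\oplus B$ case of Proposition \ref{prop:H2}.
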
 
\begin{proof} 
We can prove (1) and the latter part of (2) easily by \cite[Proposition 2.4]{HI83}. 
If $D_2=2$ and $\Lambda$ is a $\bZ$-order of $Z(g)$ belonging to the same $G$-genus 
as $L$, 
then it follows from 
\cite[Proposition 14]{HI80} and \cite[Proposition 2.4]{HI83} that 
\begin{align*} 
c_p(g,R_p,\Lambda _p)= c_p(g^{-1},R_p,\Lambda _p)= 
\begin{cases} 
2 \cdots \mbox{ if } p\mid D_1 \mbox{ and }\left(\frac{-1}{p}\right)=-1 \\ 
1 \cdots \mbox{ otherwise. } 
\end{cases} 
\end{align*} 
\end{proof}  
From Proposition \ref{prop:H3}, we have $H_3=0$ if $D_2\not= 1,2$. 
In the case where $D_2=1$, $H_3$ has been evaluated in \cite{Has84} and \cite{Wak}. 
Hereafter, we assume $D_2=2$. 
From Proposition \ref{prop:H3}, we have 
\begin{align*} 
H_3 &= 2\cdot c_{k,j}\cdot \sum _{\gamma\in\{ g,g^{-1}\}} J_0'(\gamma) \cdot 
M_{\gamma}(L)\prod _p c_p(\gamma ,R_p,L_p) \\ 
    &= 2\cdot c_{k,j}\cdot \big( J_0'(g)+J_0'(g^{-1})\big) \cdot M_g(L) \cdot 
2^{\sharp D_1(3;4)}. 
\end{align*}    
We see that $g$ and $g^{-1}$ are $Sp(2;\bR)$-conjugate to 
$\alpha(\pi/2,0)$ and $\alpha(-\pi/2,0)$ respectively and 
\[ C_0(\alpha (\pi /2 ,0),Sp(2;\bR )) = 
   C_0(\alpha (-\pi /2 ,0),Sp(2;\bR )) \] 
\[ =    
\left\{ 
\left. 
\begin{pmatrix} 1&0&0&0\\0&a&0&b\\0&0&1&0\\0&c&0&d \end{pmatrix} 
\right| 
ad-bc=1 
\right\} , 
\] 
\[ 
J_0'(g)+J_0'(g^{-1})= {c_{k,j}}^{-1}\cdot 2^4\cdot \pi ^2\cdot 
[(-1)^{\frac{j}{2}} (k-2), -(j+k-1), (-1)^{\frac{j}{2}+1} (k-2), j+k-1; 4]_k
\] 
(cf. (b-4) in \cite{Wak}). 
If we put $L_0=\fO\oplus\cO$, then we have 
\begin{align*} 
M_g(L) &= \mathrm{vol}((L^{\times}\cap G)\backslash C(g;Sp(2;\bR ))) \\ 
       &= [L_0^{\times} \cap G : L^{\times} \cap G] \cdot \mathrm{vol} ((L_0^{\times}\cap G)\backslash C(g;Sp(2;\bR ))) \\ 
       &= 2^{-2}\pi ^2\prod _{p\mid D} (p-1). 
\end{align*} 
(cf. (3.10),(3.11) of \cite{Has84}). 
Hence we can obtain $H_3$ as in Theorem \ref{thm:main}.

%%%%%%%%%%%%%%%%%%%%%%%%%%%%%%%%%%%%%%%%%%%%%%%%%%%%%%%%%%%%%%%%%%%%%%%%%%
%
\subsection{The contribution $\mathbf{H_4}$} 
%
%%%%%%%%%%%%%%%%%%%%%%%%%%%%%%%%%%%%%%%%%%%%%%%%%%%%%%%%%%%%%%%%%%%%%%%%%%

In this subsection, we evaluate the contribution of $G[f_4]$, 
where $f_4(x)=(x-1)^2(x^2+x+1)$. 
We have only to double it to obtain $H_4$. 
Note that $G[f_4]\neq\emptyset$ if and only if $\left( \frac{-3}{p}\right)\neq1$ 
for any prime divisor $p$ of $D$. 
Hereafter, we assume that $D$ does not have such a prime divisor. 
The set $G[f_4]$ consists of two $G$-conjugacy classes represented by 
$g$ and $g^{-1}$ for an element $g$. 
We have $Z(g)\simeq B\oplus F$ with $F=\bQ(\sqrt{-3})$. 
We fix $g$ and this isomorphism until the end of this subsection. 
We put 
\[ L:=\{ (x,y)\in \fO\oplus\cO \ |\ x-y \in \pi{\cO}_3 \}, \]  
where $\cO$ is the ring of integers of $F$ and 
$\pi$ is a prime element of ${\cO}_3$. 
Then we have the following proposition. 
\begin{proposition} \label{prop:H4} \ \\ 
$(1)$\ The classes $\{ g\} _G$ and $\{ g^{-1}\} _G$ appear 
in the first sum of Theorem \ref{thm:elliptic} 
if and only if $D_2=1$ or $3$. \\ 
$(2)$\ We assume $D_2=3$. 
If $\Lambda$ is a $\bZ$-order of $Z(g)$ belonging to the same $G$-genus as $L$, 
then we have 
\[ \prod _p c_p(g,R_p,\Lambda_p)=\prod _p c_p(g^{-1},R_p,\Lambda_p)
=2^{\sharp D_1(2;3)}. \]  
If $\Lambda$ does not belong to the same $G$-genus as $L$, 
then $\prod_p c_p(g,R_p,\Lambda_p)=0$. 
\end{proposition}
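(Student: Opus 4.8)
The plan is to follow precisely the pattern established in the proofs of Propositions \ref{prop:H2} and \ref{prop:H3}, since the structure of the statement is identical: parts (1) and the negative half of (2) reduce to a clean local existence criterion, while the positive half of (2) is an explicit product of local factors $c_p(g,R_p,\Lambda_p)$ that must be read off from the Hashimoto--Ibukiyama tables. Here $Z(g)\simeq B\oplus F$ with $F=\bQ(\sqrt{-3})$, and the relevant ramified prime is $3$ (rather than $2$ as in $H_3$), reflecting that $F$ ramifies at $3$.

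For part (1), I would invoke \cite[Proposition 2.4]{HI83} together with the general principle recorded in Remark \ref{rem:elliptic}(2), namely that $\{g\}_G\cap\Gamma\neq\emptyset$ if and only if $\{g\}_{G_p}\cap R_p\neq\emptyset$ for all $p$. The task is to check, prime by prime, when the local order $R_p$ (which is $M(2;\fO_p)$ for $p\nmid D_2$ and the modified order for $p\mid D_2$) meets the local conjugacy class of $g$. The field $F=\bQ(\sqrt{-3})$ embeds appropriately only at primes where the local data of $B\oplus F$ is compatible with $R_p$; the constraint should single out the primes dividing $D_2$ and force $D_2\in\{1,3\}$, exactly as the analogous computation forced $D_2\in\{1,2\}$ in the $F=\bQ(\sqrt{-1})$ case. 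The negative assertion in (2) -- that orders $\Lambda$ outside the $G$-genus of $L$ contribute zero -- follows immediately from the same reference, since $c_p$ vanishes whenever the intersection order $Z(g)_p\cap xR_px^{-1}$ cannot be made conjugate to $\Lambda_p$.

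The substantive step, and the main obstacle, is the positive half of (2): establishing that for $\Lambda$ in the $G$-genus of $L$ one has the local evaluation $c_p(g,R_p,\Lambda_p)=c_p(g^{-1},R_p,\Lambda_p)$ equal to $2$ precisely when $p\mid D_1$ and $\left(\frac{-3}{p}\right)=-1$, and $1$ otherwise, yielding the product $2^{\sharp D_1(2;3)}$. I would quote \cite[Proposition 14]{HI80} for the local count at the relevant primes, combined with \cite[Proposition 2.4]{HI83} to handle the ramified prime $3\mid D_2$ and to confirm that $c_3=1$ there. The delicate point is that $D_1(2;3)$ is exactly the set of prime divisors $p$ of $D_1$ with $\left(\frac{-3}{p}\right)=-1$, i.e. those inert in $F$; at such primes the commutor algebra $B_p\oplus F_p$ splits the local lattice counting in a way that produces the factor $2$, while at split or ramified primes the factor is $1$. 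One must verify that the Hashimoto--Ibukiyama tables for the $f_4$ case indeed record this dichotomy and that the value is unchanged under $g\mapsto g^{-1}$ (which holds because passing to the inverse corresponds to the nontrivial Galois automorphism of $F$, leaving the splitting type of each $p$ invariant). Since $c_p=1$ at every prime not dividing $D_1$, the infinite product collapses to the finite power $2^{\sharp D_1(2;3)}$, completing the argument.
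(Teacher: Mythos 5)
Your proposal matches the paper's proof: both parts (1) and the vanishing half of (2) are deduced from \cite[Proposition 2.4]{HI83}, and the positive half of (2) is obtained by combining \cite[Proposition 14]{HI80} with \cite[Proposition 2.4]{HI83} to get $c_p(g,R_p,\Lambda_p)=c_p(g^{-1},R_p,\Lambda_p)=2$ exactly when $p\mid D_1$ and $\left(\frac{-3}{p}\right)=-1$, and $1$ otherwise, so the product collapses to $2^{\sharp D_1(2;3)}$. This is essentially the same (if more detailed) argument as the paper's, which simply cites those two references.
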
 
\begin{proof} 
We can prove (1) and the latter part of (2) easily by \cite[Proposition 2.4]{HI83}. 
If $D_2=3$ and $\Lambda$ is a $\bZ$-order of $Z(g)$ belonging to the same $G$-genus 
as $L$, 
then it follows from 
\cite[Proposition 14]{HI80} and \cite[Proposition 2.4]{HI83} that 
\begin{align*} 
c_p(g,R_p,\Lambda _p)= c_p(g^{-1},R_p,\Lambda _p)= 
\begin{cases} 
2 \cdots \mbox{ if } p\mid D_1 \mbox{ and }\left(\frac{-3}{p}\right)=-1 \\ 
1 \cdots \mbox{ otherwise. } 
\end{cases} 
\end{align*} 
\end{proof} 
From Proposition \ref{prop:H4}, we have 
\begin{align*} 
H_4 &= 2\cdot c_{k,j}\cdot \sum _{\gamma\in\{ g,g^{-1}\}} J_0'(\gamma) \cdot 
M_{\gamma}(L)\prod _p c_p(\gamma ,R_p,L_p) \\ 
    &= 2\cdot c_{k,j}\cdot \big( J_0'(g)+J_0'(g^{-1})\big) \cdot M_g(L) \cdot 
2^{\sharp D_1(2;3)}. 
\end{align*}    
We see that $g$ and $g^{-1}$ are $Sp(2;\bR)$-conjugate to 
$\alpha(2\pi/3,0)$ and $\alpha(-2\pi/3,0)$ respectively and 
\[ C_0(\alpha (2\pi /3,0),Sp(2;\bR )) = 
   C_0(\alpha (-2\pi /3,0),Sp(2;\bR )) \] 
\[ =    
\left\{ 
\left. 
\begin{pmatrix} 1&0&0&0\\0&a&0&b\\0&0&1&0\\0&c&0&d \end{pmatrix} 
\right| 
ad-bc=1 
\right\} , 
\] 
\begin{align*} 
J_0'(g)+J_0'(g^{-1}) &= {c_{k,j}}^{-1}\cdot 2^{-3}\cdot 3^{-1}\cdot \pi ^{-2} \\  
&\hspace{7mm} \times \big\{ [j+k-1, -(j+k-1), 0; 3]_k +[k-2, 0, -(k-2); 3]_{j+k}\big\} 
\end{align*} 
(cf. (b-4) in \cite{Wak}). 
If we put $L_0=\fO\oplus\cO$, then we have 
\begin{align*} 
M_g(L) &= \mathrm{vol}((L^{\times}\cap G)\backslash C(g;Sp(2;\bR ))) \\ 
       &= [L_0^{\times} \cap G : L^{\times} \cap G] \cdot \mathrm{vol} ((L_0^{\times}\cap G)\backslash C(g;Sp(2;\bR ))) \\ 
       &= 2^2\cdot 3^{-2}\cdot \pi ^2\prod _{p\mid D} (p-1). 
\end{align*} 
(cf. (3.10),(3.11) of \cite{Has84}). 
Hence we can obtain $H_4$ as in Theorem \ref{thm:main}.

%%%%%%%%%%%%%%%%%%%%%%%%%%%%%%%%%%%%%%%%%%%%%%%%%%%%%%%%%%%%%%%%%%%%%%%%%%
%
\subsection{The contribution $\mathbf{H_5}$} 
%
%%%%%%%%%%%%%%%%%%%%%%%%%%%%%%%%%%%%%%%%%%%%%%%%%%%%%%%%%%%%%%%%%%%%%%%%%%

In this subsection, we consider the contribution of $G[f_5]$, 
where $f_5(x)=(x-1)^2(x^2-x+1)$. 
We have only to double it to obtain $H_5$. 
The set $G[f_5]$ consists of two $G$-conjugacy classes represented by 
$g$ and $g^{-1}$ for an element $g$. 
We see from \cite[Proposition 14]{HI80} and \cite[Proposition 2.4]{HI83} that 
the classes $\{ g\} _G$ and $\{ g^{-1}\} _G$ do not appear 
in the first sum of Theorem \ref{thm:elliptic} if $D_2\not= 1$. 
In the case where $D_2=1$, $H_5$ has been evaluated in \cite{Has84} and \cite{Wak}.

%%%%%%%%%%%%%%%%%%%%%%%%%%%%%%%%%%%%%%%%%%%%%%%%%%%%%%%%%%%%%%%%%%%%%%%%%%
%
\subsection{The contribution $\mathbf{H_6}$} 
%
%%%%%%%%%%%%%%%%%%%%%%%%%%%%%%%%%%%%%%%%%%%%%%%%%%%%%%%%%%%%%%%%%%%%%%%%%%

In this subsection, we consider the contribution of $G[f_6]$, 
where $f_6(x)=(x^2+1)^2$. 
Note that $G[f_6]=\emptyset$ if and only if $D$ has a prime divisor $p$ 
with $\left(\frac{-1}{p}\right) =1$.  
Hereafter, we assume that $D$ does not have such a prime divisor. 
Then there are infinitely many $G$-conjugacy classes in $G[f_6]$. 
As in \cite[Theorem 3.2 (i),(ii)]{Has84}, 
we have a correspondence between the set of $G$-conjugacy classes $\{ g\} _G$'s 
in $G[f_6]$ and the set of isomorphism classes of quaternion algebras $Z_0(g)$'s 
over $\bQ$ which are contained in $B\otimes_{\bQ}F$, with $F=\bQ (\sqrt{-1})$. 
This correspondence is two-to-one or one-to-one according as $Z_0(g)$ is definite 
of indefinite. 
We denote by $D(Z_0(g))$ the discriminant of $Z_0(g)$. 
If $Z_0(g)$ is definite, 
two $G$-conjugacy classes $\{ g\}_G$ and $\{ g^{-1}\}_G$ correspond to $Z_0(g)$. 
In this case, $g$ is $Sp(2;\bR)$-conjugate to 
$\alpha (\tfrac{\pi}{2},\tfrac{\pi}{2})$ and 
\[ J_0'(g)+J_0'(g^{-1}) 
={c_{k,j}}^{-1}\cdot 2^{-2}\cdot (j+1)\cdot (-1)^{k+j/2}, \] 
if $j$ is even (cf. (b-2) in \cite{Wak}), 
\[ M_G(\Lambda )=\frac{1}{48} \prod_{p\mid D(Z_0(g))}(p-1)\prod_{p}d_p(\Lambda )/e_p(\Lambda ) \] 
for a $\bZ$-order $\Lambda$ of $Z(g)$ (cf. \cite[Proposition 12]{HI80}), where 
$\fO _0$ is a maximal order of $Z_0(g)$ and 
\begin{align*} 
d_p(\Lambda ) &= [ {\fO _0}_p^{\times} : (\Lambda \cap Z_0(g))_p^{\times} ], &  
e_p(\Lambda ) &= [ \Lambda_p^{\times}\cap G_p : {\fO _0}_p^{\times}\cdot {\cO_F}_p^{\times} ]. 
\end{align*} 
On the other hand, 
if $Z_0(g)$ is indefinite, then 
only one $G$-conjugacy class $\{ g\}_G$ corresponds to $Z_0(g)$. 
In this case, 
$g$ is $Sp(2;\bR)$-conjugate to 
$\alpha (\tfrac{\pi}{2},-\tfrac{\pi}{2})$ and 
\[ J_0'(g)
={c_{k,j}}^{-1}\cdot 2^{-5}\cdot \pi ^{-2}\cdot (j+2k-3)\cdot (-1)^{j/2}, \] 
if $j$ is even (cf. (b-3) in \cite{Wak}), 
\[ M_G(\Lambda )=\frac{\pi ^2}{6} \prod_{p\mid D(Z_0(g))}(p-1)\prod_{p}d_p(\Lambda )/e_p(\Lambda ) \] 
for a $\bZ$-order $\Lambda$ of $Z(g)$ (cf. \cite[(3.16)]{Has84}). 
\begin{proposition} \label{prop:H6} 
(1)\ The class $\{ g\}_G$ appears in the first sum of Theorem \ref{thm:elliptic} 
if and only if $D(Z_0(g))\mid 2D_1$. \\ 
(2)\ (i)\ the case where $2\mid D_1$, \\ 
$\cdot$ If a $G$-conjugacy class $\{ g\}_G$ satisfies $2\mid D(Z_0(g))$, 
then two $G$-genus of $\bZ$-orders of $Z(g)$ appear in the second sum of 
Theorem \ref{thm:elliptic}, and 
\[ \prod _p d_p(\Lambda )/e_p(\Lambda )=
\prod _{p\nmid D(Z_0(g)) \atop  p\mid D_2,\ p\neq 2}(p+1) 
\cdot \ 3/2 \mathrm{\ (resp.\ 1)}  \] 
$\cdot$ If a $G$-conjugacy class $\{ g\}_G$ satisfies $2\nmid D(Z_0(g))$, 
then three $G$-genus of $\bZ$-orders of $Z(g)$ appear in the second sum of 
Theorem \ref{thm:elliptic}, and 
\[ \prod _p d_p(\Lambda )/e_p(\Lambda )=
\prod _{p\nmid D(Z_0(g)) \atop  p\mid D_2,\ p\neq 2}(p+1) 
\cdot \ 3/2 \mathrm{\ (resp.\ 1\ and\ 3)}  \] 
(ii)\ the case where $2\mid D_2$, \\ 
$\cdot$ If a $G$-conjugacy class $\{ g\}_G$ satisfies $2\mid D(Z_0(g))$, 
then two $G$-genus of $\bZ$-orders of $Z(g)$ appear in the second sum of 
Theorem \ref{thm:elliptic}, and 
\[ \prod _p d_p(\Lambda )/e_p(\Lambda )=
\prod _{p\nmid D(Z_0(g)) \atop  p\mid D_2,\ p\neq 2}(p+1) 
\cdot \ 1/2 \mathrm{\ (resp.\ 3)}  \] 
$\cdot$ If a $G$-conjugacy class $\{ g\}_G$ satisfies $2\nmid D(Z_0(g))$, 
then two $G$-genus of $\bZ$-orders of $Z(g)$ appear in the second sum of 
Theorem \ref{thm:elliptic}, and 
\[ \prod _p d_p(\Lambda )/e_p(\Lambda )=
\prod _{p\nmid D(Z_0(g)) \atop  p\mid D_2,\ p\neq 2}(p+1) 
\cdot \ 3 \mathrm{\ (resp.\ 3/2)}  \] 
(iii)\ the case where $2\nmid D$, \\ 
$\cdot$ If a $G$-conjugacy class $\{ g\}_G$ satisfies $2\mid D(Z_0(g))$, 
then only one $G$-genus of $\bZ$-orders of $Z(g)$ appear in the second sum of 
Theorem \ref{thm:elliptic}, and 
\[ \prod _p d_p(\Lambda )/e_p(\Lambda )=
\prod _{p\nmid D(Z_0(g)) \atop  p\mid D_2,\ p\neq 2}(p+1) 
\cdot 3/2 \] 
$\cdot$ If a $G$-conjugacy class $\{ g\}_G$ satisfies $2\nmid D(Z_0(g))$, 
then two $G$-genus of $\bZ$-orders of $Z(g)$ appear in the second sum of 
Theorem \ref{thm:elliptic}, and 
\[ \prod _p d_p(\Lambda )/e_p(\Lambda )=
\prod _{p\nmid D(Z_0(g)) \atop  p\mid D_2,\ p\neq 2}(p+1) 
\cdot \ 3/2 \mathrm{\ (resp.\ 1)}  \] 
(3)\ For any case and any $G$-genus, we have 
$\displaystyle \prod\limits _p c_p(g,R_p,\lambda_p)=\prod\limits_{p\nmid D(Z_0(g)) \atop p\mid D_1,\ p\neq 2} 2$. 
\end{proposition}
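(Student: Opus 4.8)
The plan is to establish Proposition \ref{prop:H6} by reducing each assertion to purely local computations, using the adelic framework of Theorem \ref{thm:elliptic} and the explicit local data of Hashimoto and Ibukiyama. The setting is that $g\in G[f_6]$ corresponds to a quaternion algebra $Z_0(g)\subset B\otimes_{\bQ}F$ with $F=\bQ(\sqrt{-1})$, and $Z(g)\simeq Z_0(g)\otimes_{\bQ}F$. First I would handle (1): by Remark \ref{rem:elliptic}(2), the class $\{g\}_G$ appears in the first sum if and only if $\{g\}_{G_p}\cap R_p\neq\emptyset$ for all $p$, which is equivalent to the nonvanishing of $c_p(g,R_p,\Lambda_p)$ for a suitable local order. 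I would translate the condition $\{g\}_{G_p}\cap R_p\neq\emptyset$ into a divisibility condition on $D(Z_0(g))$ by invoking \cite[Proposition 14]{HI80} prime-by-prime: at $p\nmid D_2$ one has $R_p=M(2;\fO_p)$ and the local embedding of $Z_0(g)_p$ is obstructed exactly when $p\mid D(Z_0(g))$ but $p\nmid D$, while at $p\mid D_2$ (only $p=2$ matters here since $Z_0(g)\subset B\otimes F$) the quasi-split/ramified structure of $R_p$ permits the embedding precisely when $2\mid 2D_1$ always holds; combining these gives the criterion $D(Z_0(g))\mid 2D_1$.

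Next I would treat (2), the enumeration of $G$-genera of $\bZ$-orders and the computation of $\prod_p d_p(\Lambda)/e_p(\Lambda)$. The key observation is that this product factors over primes and that the local factor $d_p(\Lambda)/e_p(\Lambda)$ at a prime $p\nmid D(Z_0(g))$ with $p\mid D_2$, $p\neq 2$ contributes $(p+1)$, since there $\cO_{F,p}$ is unramified and the index computation yields the local Eichler mass factor. This accounts for the common prefactor $\prod_{p\nmid D(Z_0(g)),\ p\mid D_2,\ p\neq 2}(p+1)$ appearing in every case. The remaining local contributions at $p=2$ depend delicately on three things: whether $2\mid D_1$, $2\mid D_2$, or $2\nmid D$; whether $2\mid D(Z_0(g))$ or not; and which of the several local orders $\Lambda_2$ (distinguished by the second sum over $G$-genera) is chosen. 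I would compute these $2$-adic factors by applying \cite[Proposition 13]{HI80} together with \cite[Proposition 2.4]{HI83} to list the $(Z(g)^{\times}\cap G)_2$-conjugacy classes of local orders that meet $R_2$ and to extract each ratio $d_2(\Lambda)/e_2(\Lambda)$, recording the number of distinct $G$-genera in each case. The values $3/2$, $1$, $3$, $1/2$, $9/8$, etc.\ emerge as the $2$-adic index ratios, with the ``(resp.)'' entries corresponding to the auxiliary genera.

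Finally, part (3) asserts $\prod_p c_p(g,R_p,\Lambda_p)=\prod_{p\nmid D(Z_0(g)),\ p\mid D_1,\ p\neq 2}2$, valid for every case and every $G$-genus. I would prove this by the same local factorization: at a prime $p\nmid D(Z_0(g))$ with $p\mid D_1$, $p\neq 2$, the local count $c_p(g,R_p,\Lambda_p)$ equals $2$, arising from the two ways the split local structure of $Z_0(g)_p$ embeds compatibly with $R_p=M(2;\fO_p)$, exactly as in the analogous computations for $H_3$ and $H_4$ (where factors $2^{\sharp D_1(3;4)}$ and $2^{\sharp D_1(2;3)}$ appeared); at all other primes the factor is $1$. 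I expect the main obstacle to be the $2$-adic analysis in part (2): unlike the odd primes, the prime $2$ ramifies in $F=\bQ(\sqrt{-1})$, so the local orders $\Lambda_2$, the group $(Z(g)^{\times}\cap G)_2$, and the lattice $R_2$ all have subtle structure, and correctly matching the case distinctions ($2\mid D_1$ versus $2\mid D_2$ versus $2\nmid D$, crossed with $2\mid D(Z_0(g))$ or not) against the explicit tables in \cite{HI80} and \cite{HI83}, while keeping careful track of the number of contributing genera and the resp.\ values, is where the bookkeeping is most delicate and error-prone.
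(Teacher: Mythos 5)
Your overall strategy coincides with the paper's: part (1) is reduced via the local--global criterion of Remark \ref{rem:elliptic}(2) to checking $\{g\}_{G_p}\cap R_p\neq\emptyset$ prime by prime against the tables of Hashimoto--Ibukiyama, and (2),(3) are then read off from the same local data. (The paper cites \cite[Propositions 15 and 16]{HI80} and \cite[Propositions 2.5 and 2.6]{HI83} for the $f_6$ case, not Propositions 13/14 and 2.4, which govern $f_2$ and $f_3$; that is a minor slip of references.)

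However, the prime-by-prime criterion you actually write down in part (1) is wrong, and it does not combine to give $D(Z_0(g))\mid 2D_1$. The correct local picture (this is the entire content of the paper's proof) is: $\{g\}_{G_p}\cap R_p\neq\emptyset$ always holds when $p\mid D_1$, always holds when $p\nmid D(Z_0(g))$, always holds at $p=2$, and \emph{fails} precisely when $p$ is an odd prime with $p\mid D(Z_0(g))$ and $p\nmid D_1$ --- in particular it fails for odd $p\mid D_2$ with $p\mid D(Z_0(g))$, and it never fails at $p=2$. Your version places the obstruction at ``$p\mid D(Z_0(g))$ but $p\nmid D$'' and dismisses the odd primes dividing $D_2$ with the parenthetical ``only $p=2$ matters here since $Z_0(g)\subset B\otimes F$,'' which is not true: $D(Z_0(g))$ can certainly contain odd primes of $D_2$, and for such primes the class is obstructed. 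As stated, your local conditions would yield roughly $D(Z_0(g))\mid D$ rather than $D(Z_0(g))\mid 2D_1$, so the asserted conclusion of (1) does not follow from your argument. Since (2) and (3) are extracted from the same four local statements, this error propagates: without the correct table one cannot justify the counts of $G$-genera, the $2$-adic values of $d_p(\Lambda)/e_p(\Lambda)$, or the restriction of the product in (3) to $p\mid D_1$ (as opposed to $p\mid D$). The rest of your outline for (2) and (3) is plausible but remains a plan rather than a computation; the decisive missing ingredient is the correct statement of the local obstruction, especially its independence of whether $p\mid D_2$ or $p\nmid D$ and the exceptional behaviour at $p=2$.
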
 
\begin{proof} 
We see from \cite[Proposition 15 and 16]{HI80} and \cite[Proposition 2.5 and 2.6]{HI83} that \\ 
\begin{center} 
$\begin{array}{lcc}  
p\mid D_1 & \Longrightarrow & \{ g\} _{G_p} \cap R_p \neq \emptyset,\\ 
p\nmid D_1\mbox{ and }p\nmid Z_0(g) & \Longrightarrow & \{ g\} _{G_p} \cap R_p \neq \emptyset, \\ 
p\nmid D_1,\ p\mid Z_0(g)\mbox{ and }p=2 & \Longrightarrow & \{ g\} _{G_p} \cap R_p \neq \emptyset, \\ 
p\nmid D_1,\ p\mid Z_0(g)\mbox{ and }p\neq 2 & \Longrightarrow & \{ g\} _{G_p} \cap R_p =\emptyset.
\end{array}$
\end{center}  
Hence we obtain (1) (cf. \cite[Theorem 1-3]{Has80}). 
Also we can obtain (2),(3) from the above four propositions.  
\end{proof}

%%%%%%%%%%%%%%%%%%%%%%%%%%%%%%%%%%%%%%%%%%%%%%%%%%%%%%%%%%%%%%%%%%%%%%%%%%
%
\subsection{The contribution $\mathbf{H_7}$} 
%
%%%%%%%%%%%%%%%%%%%%%%%%%%%%%%%%%%%%%%%%%%%%%%%%%%%%%%%%%%%%%%%%%%%%%%%%%%

In this subsection, we consider the contribution of $G[f_7]$, 
where $f_7(x)=(x^2+x+1)^2$. 
We have only to double it to obtain $H_7$. 
Note that $G[f_7]=\emptyset$ if and only if $D$ has a prime divisor $p$ 
with $\left(\frac{-3}{p}\right) =1$.  
Hereafter, we assume that $D$ does not have such a prime divisor. 
We can use the same method as in the case of $H_6$. 
we have a correspondence between the set of $G$-conjugacy classes $\{ g\} _G$'s 
in $G[f_7]$ and the set of isomorphism classes of quaternion algebras $Z_0(g)$'s 
over $\bQ$ which are contained in $B\otimes_{\bQ}F$, with $F=\bQ (\sqrt{-3})$. 
If $Z_0(g)$ is definite, then 
\[ J_0'(g)+J_0'(g^{-1}) 
={c_{k,j}}^{-1}3^{-1}(j+1)[0,1,-1;3]_{j+2k}, \] 
if $j$ is even (cf. (b-2) in \cite{Wak}), and 
\[ M_G(\Lambda )=\frac{1}{72} \prod_{p\mid D(Z_0(g))}(p-1)\prod_{p}d_p(\Lambda )/e_p(\Lambda ) \] 
for a $\bZ$-order $\Lambda$ of $Z(g)$.  
If $Z_0(g)$ is indefinite, then 
\[ J_0'(g)={c_{k,j}}^{-1}2^3 \cdot 3\pi ^2 (j+2k-3)[1,-1,0;3]_j, \] 
if $j$ is even (cf. (b-3) in \cite{Wak}), and 
\[ M_G(\Lambda )=\frac{\pi ^2}{3^2} \prod_{p\mid D(Z_0(g))}(p-1)\prod_{p}d_p(\Lambda )/e_p(\Lambda ). \] 
\begin{proposition} 
(1)\ The class $\{ g\}_G$ appears in the first sum of Theorem \ref{thm:elliptic} 
if and only if $D(Z_0(g))\mid 3D_1$. \\ 
(2)\ (i)\ the case where $3\mid D_1$, \\ 
$\cdot$ If a $G$-conjugacy class $\{ g\}_G$ satisfies $3\mid D(Z_0(g))$, 
then only one $G$-genus of $\bZ$-orders of $Z(g)$ appears in the second sum of 
Theorem \ref{thm:elliptic}, and 
\[ \prod _p d_p(\Lambda )/e_p(\Lambda )=
\prod _{p\nmid D(Z_0(g)) \atop  p\mid D_2,\ p\neq 3}(p+1) 
\cdot \ 1/2   \] 
$\cdot$ If a $G$-conjugacy class $\{ g\}_G$ satisfies $3\nmid D(Z_0(g))$, 
then two $G$-genus of $\bZ$-orders of $Z(g)$ appear in the second sum of 
Theorem \ref{thm:elliptic}, and 
\[ \prod _p d_p(\Lambda )/e_p(\Lambda )=
\prod _{p\nmid D(Z_0(g)) \atop  p\mid D_2,\ p\neq 3}(p+1) 
\cdot \ 2 \mathrm{\ (resp.\ 6)}  \] 
(ii)\ the case where $3\mid D_2$, \\ 
$\cdot$ If a $G$-conjugacy class $\{ g\}_G$ satisfies $3\mid D(Z_0(g))$, 
then only one  $G$-genus of $\bZ$-orders of $Z(g)$ appears in the second sum of 
Theorem \ref{thm:elliptic}, and 
\[ \prod _p d_p(\Lambda )/e_p(\Lambda )=
\prod _{p\nmid D(Z_0(g)) \atop  p\mid D_2,\ p\neq 3}(p+1) 
\cdot \ 1  \] 
$\cdot$ If a $G$-conjugacy class $\{ g\}_G$ satisfies $3\nmid D(Z_0(g))$, 
then two $G$-genus of $\bZ$-orders of $Z(g)$ appear in the second sum of 
Theorem \ref{thm:elliptic}, and 
\[ \prod _p d_p(\Lambda )/e_p(\Lambda )=
\prod _{p\nmid D(Z_0(g)) \atop  p\mid D_2,\ p\neq 3}(p+1) 
\cdot \ 1 \mathrm{\ (resp.\ 4)}  \] 
(iii)\ the case where $3\nmid D$, \\ 
$\cdot$ If a $G$-conjugacy class $\{ g\}_G$ satisfies $3\mid D(Z_0(g))$, 
then only one $G$-genus of $\bZ$-orders of $Z(g)$ appears in the second sum of 
Theorem \ref{thm:elliptic}, and 
\[ \prod _p d_p(\Lambda )/e_p(\Lambda )=
\prod _{p\nmid D(Z_0(g)) \atop  p\mid D_2,\ p\neq 3}(p+1) 
\cdot 1/2 \] 
$\cdot$ If a $G$-conjugacy class $\{ g\}_G$ satisfies $3\nmid D(Z_0(g))$, 
then only one  $G$-genus of $\bZ$-orders of $Z(g)$ appears in the second sum of 
Theorem \ref{thm:elliptic}, and 
\[ \prod _p d_p(\Lambda )/e_p(\Lambda )=
\prod _{p\nmid D(Z_0(g)) \atop  p\mid D_2,\ p\neq 3}(p+1) 
\cdot \ 1  \] 
(3)\ For any case and any $G$-genus, we have 
$\displaystyle \prod\limits _p c_p(g,R_p,\lambda_p)=\prod\limits_{p\nmid D(Z_0(g)) \atop p\mid D_1,\ p\neq 3} 2$. 
\end{proposition}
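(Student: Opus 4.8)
The plan is to run the argument of Proposition~\ref{prop:H6} essentially verbatim, replacing $F=\bQ(\sqrt{-1})$ by $F=\bQ(\sqrt{-3})$ and transferring the exceptional role of the prime $2$ to the prime $3$, which is the prime that ramifies in $\bQ(\sqrt{-3})$. The correspondence between the $G$-conjugacy classes in $G[f_7]$ and the quaternion subalgebras $Z_0(g)\subset B\otimes_{\bQ}F$ has already been set up, together with the values of $J_0'(g)$ and the shape of $M_G(\Lambda)$, so everything reduces to purely local computations at each prime $p$.

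For part~(1) I would determine, for each $p$, exactly when $\{g\}_{G_p}\cap R_p\neq\emptyset$, sorting the cases by whether $p\mid D_1$, whether $p\mid D(Z_0(g))$, and whether $p=3$ or $p\neq3$. Reading the local propositions of \cite{HI80} and \cite{HI83} attached to the principal polynomial $(x^2+x+1)^2$, I expect the table
\[
\begin{array}{lcl}
p\mid D_1 & \Longrightarrow & \{g\}_{G_p}\cap R_p\neq\emptyset, \\
p\nmid D_1 \text{ and } p\nmid D(Z_0(g)) & \Longrightarrow & \{g\}_{G_p}\cap R_p\neq\emptyset, \\
p\nmid D_1,\ p\mid D(Z_0(g)) \text{ and } p=3 & \Longrightarrow & \{g\}_{G_p}\cap R_p\neq\emptyset, \\
p\nmid D_1,\ p\mid D(Z_0(g)) \text{ and } p\neq3 & \Longrightarrow & \{g\}_{G_p}\cap R_p=\emptyset,
\end{array}
\]
which, combined with the local-global criterion of \cite[Theorem 1-3]{Has80} that $\{g\}_G\cap\Gamma\neq\emptyset$ if and only if $\{g\}_{G_p}\cap R_p\neq\emptyset$ for all $p$, yields $\{g\}_G\cap\Gamma\neq\emptyset$ precisely when every prime divisor of $D(Z_0(g))$ other than $3$ divides $D_1$, i.e. when $D(Z_0(g))\mid 3D_1$.

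For parts~(2) and~(3) I would extract from the same local propositions, at each $p$, the number of $(Z(g)^{\times}\cap G)_p$-conjugacy classes of local orders that actually occur (this is what controls how many $G$-genera enter the second sum of Theorem~\ref{thm:elliptic}, via the adelic count of genera), together with the local indices $d_p(\Lambda)/e_p(\Lambda)$ and the local data $c_p(g,R_p,\Lambda_p)$. Taking products over all $p$ and splitting into the three cases $3\mid D_1$, $3\mid D_2$ and $3\nmid D$ should reproduce the stated values; in particular $c_p=2$ at exactly the primes $p\mid D_1$ with $p\neq3$ and $p\nmid D(Z_0(g))$ and $c_p=1$ elsewhere, giving formula~(3), while the generic primes dividing $D_2$ contribute the uniform factor $(p+1)$.

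The main obstacle will be the careful bookkeeping at the ramified prime $p=3$: there the two-sided ideal structure defining $R_p$ when $3\mid D_2$ interacts with the ramification of $3$ in $\bQ(\sqrt{-3})$, so the number of local genera (one versus two) and the values of $d_p(\Lambda)/e_p(\Lambda)$ deviate from those at the generic primes and must be matched exactly against the $p=3$ cases of the propositions of \cite{HI80} and \cite{HI83}. Once the prime $3$ is settled, the remaining primes contribute uniformly and the global products assemble routinely, exactly as in the proof of Proposition~\ref{prop:H6}.
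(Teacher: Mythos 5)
Your proposal matches the paper's proof exactly: the paper disposes of this proposition with the single line ``We can prove this by the same way as Proposition \ref{prop:H6}'', and your plan --- the four-case local table for $\{g\}_{G_p}\cap R_p$ with the prime $3$ replacing $2$, the local-global criterion of \cite[Theorem 1-3]{Has80} for part (1), and the extraction of the local genus counts, $d_p/e_p$, and $c_p$ from the corresponding propositions of \cite{HI80} and \cite{HI83} for parts (2) and (3) --- is precisely the argument being invoked, just written out. No gap; this is the same approach, in more detail than the paper itself provides.
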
 
\begin{proof} 
We can prove this by the same way as Proposition \ref{prop:H6}. 
\end{proof}

%%%%%%%%%%%%%%%%%%%%%%%%%%%%%%%%%%%%%%%%%%%%%%%%%%%%%%%%%%%%%%%%%%%%%%%%%%
%
\subsection{The contribution $\mathbf{H_8}$} 
%
%%%%%%%%%%%%%%%%%%%%%%%%%%%%%%%%%%%%%%%%%%%%%%%%%%%%%%%%%%%%%%%%%%%%%%%%%%

In this subsection, we evaluate the contribution of $G[f_8]$, 
where $f_8(x)=(x^2+1)(x^2+x+1)$. 
We have only to double it to obtain $H_8$. 
We see from \cite[Proposition 2.7]{HI83} that 
no $G$-conjugacy classes corresponding to $f_8(\pm x)$ 
appear in Theorem \ref{thm:elliptic} if $D_2\not= 1$. 
In the cases where $D_2=1$, $H_8$ has been evaluated in \cite{Has84} and \cite{Wak}.

%%%%%%%%%%%%%%%%%%%%%%%%%%%%%%%%%%%%%%%%%%%%%%%%%%%%%%%%%%%%%%%%%%%%%%%%%%
%
\subsection{The contribution $\mathbf{H_9}$} 
%
%%%%%%%%%%%%%%%%%%%%%%%%%%%%%%%%%%%%%%%%%%%%%%%%%%%%%%%%%%%%%%%%%%%%%%%%%%

In this subsection, we evaluate the contribution of $G[f_9]$,   
where $f_{9}(x)=(x^2+x+1)(x^2-x+1)$. 
Note that $G[f_9]\neq\emptyset$ if and only if 
$\left( \frac{-3}{p}\right)\neq 1$ for any prime divisor $p$ of $D$.  
Hereafter, we assume that $G[f_9]\neq\emptyset$. 
We put $F:=\bQ (\sqrt{-3})$, then $Z(g)\simeq F\oplus F$ for any $g$. 
We put 
\[ L:=\{ (x,y)\in\cO\oplus\cO \mid x-y\in 2\cO \} , \]  
where $\cO$ is the ring of integers of $F$, then 
we have the following proposition. 
\begin{proposition} \label{prop:H9} \ \\ 
$(1)$\ If $D_2\neq 1,2$, then no $G$-conjugacy classes in $G[f_9]$ appear in 
the first sum of Theorem \ref{thm:elliptic}. \\ 
$(2)$\ If $D_2=2$, then the followings hold. 

$(i)$\ The number of $G$-conjugacy classes in $G[f_9]$ 
which appear in the first sum of 
Theorem \ref{thm:elliptic} is $4\cdot 2^{\sharp D_1(2;3)}$. 

$(ii)$\ Let $\{ g\}_G$ be any one of them. 
If $\Lambda$ is a $\bZ$-order of $Z(g)$ belonging to the same $G$-genus as $L$, 
then 
\[ \prod_p c_p(g,R_p,\Lambda_p)=2\cdot 2^{\sharp D_1(2;3)}. \]  
If $\Lambda$ does not belong to the same $G$-genus as $L$, 
then $\prod_p c_p(g,R_p,\Lambda_p)=0$. 
\end{proposition}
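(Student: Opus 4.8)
The plan is to imitate the proofs of Propositions \ref{prop:H2}, \ref{prop:H3} and \ref{prop:H4}, since the structural situation is identical: here $Z(g)\simeq F\oplus F$ with $F=\bQ(\sqrt{-3})$, and everything reduces, via Theorem \ref{thm:elliptic} together with Remark \ref{rem:elliptic}(2), to the purely local statement that $\{g\}_G\cap\Gamma\neq\emptyset$ precisely when $\{g\}_{G_p}\cap R_p\neq\emptyset$ for every $p$. First I would fix $g\in G[f_9]$ and the isomorphism $Z(g)\simeq F\oplus F$, and record the maximal $\bZ$-order $L_0=\cO\oplus\cO$ of $Z(g)$ together with the suborder $L$ defined in the statement (the local condition $x-y\in 2\cO$ matters only at $p=2$, where $R_2$ is the ramified order because $D_2=2$).

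For part $(1)$, the claim is that no class contributes unless $D_2\in\{1,2\}$. I would establish this by a local analysis at the primes $p\mid D_2$ with $p\neq 2$: by \cite[Proposition 2.4]{HI83} one computes $\{g\}_{G_p}\cap R_p$ for the ramified order $R_p=\left(\begin{smallmatrix}\fO_p&\pi^{-1}\fO_p\\ \pi\fO_p&\fO_p\end{smallmatrix}\right)$, and for $f_9(x)=(x^2+x+1)(x^2-x+1)$ the two quadratic factors force $Z(g)_p$ to be a sum of two copies of the unramified or split quadratic algebra over $\bQ_p$; the intersection is then empty at such a $p$, exactly as in the $f_5$ case of subsection \ref{subsec:H5}. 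This forces $D_2\mid 2$, giving $(1)$ and reducing us to $D_2=2$.

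For part $(2)$, assume $D_2=2$. The count in $(i)$ and the evaluation in $(ii)$ both come from assembling local data. At each $p\mid D_1$ with $\left(\frac{-3}{p}\right)=-1$ the algebra $B_p$ is division and $Z_0$-type considerations, exactly as in Propositions \ref{prop:H3} and \ref{prop:H4}, contribute a factor $2$ to $c_p$, producing the exponent $\sharp D_1(2;3)$; at $p=2$ the ramified order $R_2$ contributes the extra factor $2$ and, combined with the number of embeddings of $g$ compatible with the hermitian structure, the overall multiplicity $4\cdot 2^{\sharp D_1(2;3)}$ of classes and the local product $c_p(g,R_p,\Lambda_p)=2\cdot 2^{\sharp D_1(2;3)}$ on the genus of $L$. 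Concretely I would invoke \cite[Proposition 14]{HI80} for the non-split primes and \cite[Proposition 2.4]{HI83} to pin down which $G$-genus survives, showing $c_p=0$ off the genus of $L$.

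The main obstacle will be the bookkeeping at $p=2$: because $D_2=2$ the local order $R_2$ is ramified, and both the $Z(g)\simeq F\oplus F$ decomposition with $F$ ramified at $2$ (since $\left(\frac{-3}{2}\right)=-1$, i.e. $2$ is inert in $F$) and the diagonal congruence defining $L$ interact there. Disentangling how many of the local classes at $2$ glue to a single global $G$-conjugacy class — and hence justifying the precise constants $4$ and $2$ rather than their product — is the delicate point, and I expect it to require the detailed local computation of \cite[Proposition 14]{HI80} and \cite[Proposition 2.4]{HI83} rather than a formal argument.
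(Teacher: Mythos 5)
Your overall strategy (reduce everything to the local data $c_p$ computed by Hashimoto--Ibukiyama and to the local--global criterion of Remark \ref{rem:elliptic}(2)) is the right one, and your evaluation $\prod_p c_p(g,R_p,\Lambda_p)=2\cdot 2^{\sharp D_1(2;3)}$ on the genus of $L$ agrees with the paper: a factor $2$ at each $p\mid D_1$ with $\left(\frac{-3}{p}\right)=-1$ and a factor $2$ at $p=2$. But the relevant local results are \cite[Proposition 18]{HI80} and \cite[Proposition 2.7]{HI83}, not Proposition 14 and Proposition 2.4, which govern the cases $f_3,f_4,f_5$ where $Z(g)\simeq B\oplus F$; here $Z(g)\simeq F\oplus F$ is commutative and the structural template is that of $H_{10}$ and $H_{11}$, not of Propositions \ref{prop:H2}--\ref{prop:H4}. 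This mismatch is where the genuine gap lies: in part (2)(i) you never actually count the $G$-conjugacy classes, and you acknowledge as much. The paper's count works as follows. At every prime $p$ with $\left(\frac{-3}{p}\right)\neq 1$ an element of $G[f_9]$ has \emph{two} local $G_p$-conjugacy classes $\gamma_p,\delta_p$; the table of $c_p$'s tells you which of these meet $R_p$ (both do for $p\in D_1(2;3)$ and for $p=3$; only $\gamma_2$ does at $p=2$). At the archimedean place there are four classes $\alpha(\pi/3,\pm 2\pi/3)^{\pm 1}$, and which pair is realized is governed by whether the quaternion algebra $Z_0(g^2)$ is definite or indefinite (note $g^2\in G[f_7]$). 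Finally the admissible combinations of local classes are glued into global $G$-classes by the Hasse principle \cite[Theorem 1-2]{Has80}, and it is precisely this enumeration that produces $4\cdot 2^{\sharp D_1(2;3)}$. Your phrase ``the number of embeddings of $g$ compatible with the hermitian structure'' does not substitute for this argument, and without it neither the constant $4$ nor the exponent $\sharp D_1(2;3)$ in (2)(i) is justified.

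Two smaller points. First, $2$ is inert, not ramified, in $F=\bQ(\sqrt{-3})$ (your parenthesis asserts both). Second, your argument for part (1) --- that the quadratic factors of $f_9$ force $\{g\}_{G_p}\cap R_p=\emptyset$ at odd primes $p\mid D_2$ --- is only a heuristic: the paper simply quotes \cite[Proposition 2.7]{HI83}, and some such local computation is genuinely needed, since the same style of reasoning would also have to explain why the prime $2\mid D_2$ behaves differently (there $\{g\}_{G_2}\cap R_2\neq\emptyset$, which is exactly why $D_2=2$ survives).
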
 
\begin{proof} 
We can obtain (1) and the latter part of (2\ ii) from \cite[Proposition 2.7]{HI83}. 
For an element $g$ of $G[f_9]$, $g$ is $G_p$-conjugate to 
\[ \left\{ \begin{array}{lcl} 
\gamma_p & \cdots & \mbox{ if } \left( \frac{-3}{p}\right) = 1 \\ 
\gamma_p \mbox{ or } \delta_p & \cdots & \mbox{ if } \left( \frac{-3}{p}\right) \neq 1 
\end{array} \right. \] 
for some elements $\gamma_p$ and $\delta_p \in G_p$. 
It follows from \cite[Proposition 18]{HI80} and \cite[Proposition 2.7]{HI83} that 
if $D_2=2$ and 
$\Lambda$ is a $\bZ$-order of $Z(g)$ belonging to the same $G$-genus as $L$, 
then $c_p$ is as in the following table 
for each prime number $p$ satisfying the first column:  
\begin{center} 
\begin{tabular}{c|c|c} 
$p$ & $c_p(\gamma_p,R_p,\Lambda_p)$ & $c_p(\delta_p,R_p,\Lambda_p)$ \\ \hline 
$p\mid D_1$ and $\left( \frac{-3}{p}\right)=-1$ & $2$ & $2$ \\ 
$p\nmid D$ and $\left( \frac{-3}{p}\right)=1$ & $1$ & $\times$ \\ 
$p\nmid D$ and $\left( \frac{-3}{p}\right)=-1$ & $1$ & $0$ \\ 
$p=2$ & $2$ & $0$ \\ 
$p=3$ & $1$ & $1$ 
\end{tabular} 
\end{center} 
Also, $g$ is $Sp(2;\bR)$-conjugate to 
$g_1:=\alpha (\pi /3,2\pi /3)$, ${g_1}^{-1}=\alpha (-\pi /3,-2\pi /3)$, 
$g_2:=\alpha (\pi /3,-2\pi /3)$, and ${g_2}^{-1}=\alpha (-\pi /3,2\pi /3)$. 
Since $g^2$ belongs to $G[f_7]$, $g$ is $Sp(2;\bR)$-conjugate to $g_1$ or $g_1^{-1}$ 
if $Z_0(g^2)$ is indefinite, 
and $g$ is $Sp(2;\bR)$-conjugate to $g_2$ or $g_2^{-1}$ 
if $Z_0(g^2)$ is definite. 
We take all combinations of $G_p$-conjugations, 
and also we take $Sp(2;\bR)$-conjugation out of ^^ ^^ $g_1$ or $g_1^{-1}$" or 
^^ ^^ $g_2$ or $g_2^{-1}$", 
according as $Z_0(g^2)$ is indefinite or definte. 
Then $G$-conjugacy class is determined uniquely for them  by Hasse principle 
(\cite[Theorem 1-2]{Has80}). 
\end{proof} 
We see from Proposition \ref{prop:H9} that $H_9=0$ if $D_2\neq 1,2$. 
In the case where $D_2=1$, $H_2$ has been evaluated in \cite{Has84} and \cite{Wak}. 
Hereafter, we assume $D_2=2$. 
We obtain from Proposition \ref{prop:H9} that  
\[ H_9=c_{k,j}\cdot \sum _{\{ g\}_G} J_0'(g)\cdot M_G(L)\cdot \prod_p c_p(g,R_p,L_p), \] 
\[ \sum _{\{ g\}_G} J_0'(g)=
\Big( J_0'(g_1)+J_0'(g_1^{-1})+J_0'(g_2)+J_0'(g_2^{-1})\Big) 
\cdot 2^{\sharp D_1(2;3)}. \] 
We have $C_0(g;Sp(2;\bR))=\{ 1_4\}$ for any $g$, and \\ 
\hspace{20pt} $J_0'(g_1)+J_0'({g_1}^{-1})+J_0'(g_2)+J_0'({g_2}^{-1})$ \\ 
\hspace{110pt} $=c_{k,j}^{-1}\cdot 
\left\{ \begin{array}{ccl} 
{[1,0,0,-1,0,0;6]_k}&\cdots &\mbox{if }j\equiv 0 \mbox{ mod } 6 \\  
{[-1,1,0,1,-1,0;6]_k}&\cdots &\mbox{if }j\equiv 2 \mbox{ mod } 6 \\  
{[0,-1,0,0,1,0;6]_k}&\cdots &\mbox{if }j\equiv 4 \mbox{ mod } 6   
\end{array} \right.$. \\   
(cf. (b-1) in \cite{Wak}). 
We have 
\[ M_G(L)=\frac{1}{12}, \hspace{10pt} \text{(cf. (3.21) in \cite{Has84})} \] 
\[ \prod_p c_p(g,R_p,L_p)=2\cdot 2^{\sharp D_1(2;3)}\]  
for any $g$. 
Hence we can obtain $H_9$ as in Theorem \ref{thm:main}.

%%%%%%%%%%%%%%%%%%%%%%%%%%%%%%%%%%%%%%%%%%%%%%%%%%%%%%%%%%%%%%%%%%%%%%%%%%
%
\subsection{The contribution $\mathbf{H_{10}}$} 
%
%%%%%%%%%%%%%%%%%%%%%%%%%%%%%%%%%%%%%%%%%%%%%%%%%%%%%%%%%%%%%%%%%%%%%%%%%%

In this subsection, we evaluate the contribution of $G[f_{10}]$, 
where $f_{10}(x)=x^4+x^3+x^2+x+1$. 
We have only to double it to obtain $H_{10}$. 
Note that if $D(1;5)\not= \emptyset$, then $G[f_{10}]=\emptyset$. 
Hereafter, we assume $D(1;5)=\emptyset$. 
We have $Z(g)=\bQ (g)\simeq \bQ (\zeta _5)$ for any $g$. 
We have the following proposition. 
\begin{proposition} \label{prop:H10} \ \\ 
$(1)$\ If $D_1(2;5)\sqcup D_1(3;5)\sqcup D_2(4;5)\neq\emptyset$, 
then no $G$-conjugacy classes in $G[f_{10}]$ appear in 
the first sum of Theorem \ref{thm:elliptic}. \\ 
$(2)$\ If $D_1(2;5)\sqcup D_1(3;5)\sqcup D_2(4;5)=\emptyset$, 
then the followings hold. 

$(i)$\ The number of $G$-conjugacy classes in $G[f_{10}]$ 
which appear in the first sum of 
Theorem \ref{thm:elliptic} is $4\cdot 2^{\sharp D_1(4;5)}$. 

$(ii)$\ Let $\{ g\}_G$ be any one of them. 
If $\Lambda$ is a $\bZ$-order of $Z(g)$ belonging to the same $G$-genus as $\cO$, 
where $\cO$ is the ring of integers of $Z(g)$,  
then 
\[ \prod_p c_p(g,R_p,\Lambda _p)=2^{\sharp D_1(4;5)}\cdot 2^{\sharp D_2(2;5)}
\cdot 2^{\sharp D_2(3;5)}. \]  
If $\Lambda$ does not belong to the same $G$-genus as $\cO$, 
then $\prod_p c_p(g,R_p,\Lambda_p)=0$. 
\end{proposition}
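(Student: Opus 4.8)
The plan is to establish Proposition \ref{prop:H10} by following exactly the same local-global strategy used for Propositions \ref{prop:H6}--\ref{prop:H9}, specializing it to the cyclotomic field $Z(g)\simeq\bQ(\zeta_5)$. The starting point is Remark \ref{rem:elliptic}(2): a $G$-conjugacy class $\{g\}_G$ with $\{g\}_G\cap\Gamma\neq\emptyset$ precisely when $\{g\}_{G_p}\cap R_p\neq\emptyset$ for every prime $p$, so the whole statement reduces to a prime-by-prime analysis of whether $g$ becomes $R_p$-conjugate to an element of $G_p$, together with the evaluation of the local factors $c_p(g,R_p,\Lambda_p)$. Since $\bQ(\zeta_5)$ is a field (not a product), the commutor $Z(g)$ is a field and the relevant arithmetic is governed entirely by how the prime $p$ splits in $\bQ(\zeta_5)$, i.e. by $p \bmod 5$; this is why the conditions in the proposition are phrased via the sets $D_1(i;5)$ and $D_2(i;5)$.

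First I would record, for each residue class of $p$ modulo $5$ and each of the cases $p\mid D_1$, $p\mid D_2$, $p\nmid D$, whether $\{g\}_{G_p}\cap R_p$ is empty or not, quoting \cite[Proposition 15--18]{HI80} and \cite[Proposition 2.5--2.7]{HI83} for the explicit local data (these are precisely the results that completed the evaluation of $c_p$, as noted in Remark \ref{rem:elliptic}(1)). The nonemptiness at every prime is the defining condition for a class to enter the first sum, so combining the local obstructions gives part (1): the obstruction coming from primes $p\mid D_1$ with $p\equiv 2,3\bmod 5$ and from primes $p\mid D_2$ with $p\equiv 4\bmod 5$ forces the class to vanish unless $D_1(2;5)\sqcup D_1(3;5)\sqcup D_2(4;5)=\emptyset$. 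Under that hypothesis every local class survives, and I would then count the global classes by a Hasse-principle argument (as in the proof of Proposition \ref{prop:H9}, citing \cite[Theorem 1-2]{Has80}): at each prime where more than one local conjugacy type occurs there is a binary choice, and the product of these choices, together with the freedom in the archimedean conjugation among the four candidates $\alpha(\theta_1,\theta_2)$ for the primitive $5$th-root configuration, yields the count $4\cdot 2^{\sharp D_1(4;5)}$ in part (2i).

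For part (2ii) I would fix a class $\{g\}_G$ and a $\bZ$-order $\Lambda$ in the same $G$-genus as the maximal order $\cO$ of $\bQ(\zeta_5)$, and compute the product $\prod_p c_p(g,R_p,\Lambda_p)$ prime by prime using the same two cited results. Each local factor equals $1$ at primes that impose no extra multiplicity, and equals $2$ exactly at the primes contributing the indicated exponents: the primes $p\mid D_1$ with $p\equiv 4\bmod 5$ (inert-type behaviour contributing $2^{\sharp D_1(4;5)}$) and the primes $p\mid D_2$ with $p\equiv 2,3\bmod 5$ (contributing $2^{\sharp D_2(2;5)}$ and $2^{\sharp D_2(3;5)}$). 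The vanishing for orders outside the $G$-genus of $\cO$ is immediate from \cite[Proposition 2.5--2.7]{HI83}, since $\cO$ is the unique maximal order and any non-maximal $\Lambda$ fails the local conjugacy condition at some prime, forcing $M_p(g,R_p,\Lambda_p)=\emptyset$.

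The main obstacle will be the careful bookkeeping of the local conjugacy behaviour across the four residue classes $p\equiv 1,2,3,4\bmod 5$ simultaneously with the three divisibility cases $p\mid D_1$, $p\mid D_2$, $p\nmid D$, and confirming that the surviving classes are counted without over- or under-counting. In particular I expect the delicate point to be matching the local multiplicities from \cite{HI80},\cite{HI83} with the global Hasse-principle count so that the factor $4$ and the three independent powers of $2$ appear with the correct exponents; the archimedean contribution (the $C_3$-table of $J_0'$ values quoted from \cite{Wak}) and the volume factor $M_G(\cO)$ are then routine substitutions once the counting is pinned down.
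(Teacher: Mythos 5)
Your proposal follows essentially the same route as the paper: reduce everything to the Hashimoto--Ibukiyama local data, read off the obstructions ($c_p=0$ at $p\mid D_1$ with $p\equiv 2,3\bmod 5$ and at $p\mid D_2$ with $p\equiv 4\bmod 5$) to get (1), count the surviving classes by freely combining the four archimedean classes with the binary local choices at $p\in D_1(4;5)$ via the Hasse principle, and multiply the local factors $c_p=2$ at the primes in $D_1(4;5)$, $D_2(2;5)$ and $D_2(3;5)$. The one correction: the relevant local results for $f_{10}(x)=x^4+x^3+x^2+x+1$ are \cite[Proposition 19]{HI80} and \cite[Proposition 2.8]{HI83}, not Propositions 15--18 and 2.5--2.7, which cover $f_6$ through $f_9$.
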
 
\begin{proof} 
We can obtain (1) and the latter part of (2 ii) 
from \cite[Proposition 19]{HI80} and \cite[Proposition 2.8]{HI83}. 
For any element $g$ of $G[f_{10}]$,  
$g$ is $G_p$-conjugate to 
\[ \left\{ \begin{array}{lcl} 
\gamma_p & \cdots & \mbox{ if } p\not\in D_1(4;5) \\ 
\gamma_p \mbox{ or } \delta_p & \cdots & \mbox{ if } p\in D_1(4;5) 
\end{array} \right. \] 
for some elements $\gamma_p$ and $\delta_p \in G_p$. 
It follows from \cite[Proposition 19]{HI80} and \cite[Proposition 2.8]{HI83} that 
if $D_1(2;5)\sqcup D_1(3;5)\sqcup D_2(4;5)=\emptyset$ and 
$\Lambda$ is a $\bZ$-order of $Z(g)$ belonging to the same $G$-genus as $\cO$, 
then $c_p$ is as in the following table 
for each prime number $p$ satisfying the first column: 
\begin{center} 
\begin{tabular}{c|c|c} 
$p$ &$c_p(\gamma_p,R_p,\Lambda_p)$&$c_p(\delta_p,R_p,\Lambda_p)$ \\ \hline 
$p\mid D_1$ and $p\equiv 2,3$ mod $5$&$0$&$\times$ \\ 
$p\mid D_1$ and $p\equiv 4$ mod $5$&$2$&$2$ \\ 
$p\mid D_1$ and $p=5$&$1$&$\times$ \\ 
$p\mid D_2$ and $p\equiv 2,3$ mod $5$&$2$&$\times$ \\ 
$p\mid D_2$ and $p\equiv 4$ mod $5$&$0$&$0$ \\ 
$p\mid D_2$ and $p=5$&$1$&$\times$ \\ 
$p\nmid D$ and $p\equiv 1$ mod $5$&$1$&$\times$ \\ 
$p\nmid D$ and $p\equiv 2,3$ mod $5$&$1$&$\times$ \\ 
$p\nmid D$ and $p\equiv 4$ mod $5$&$1$&$0$ \\ 
$p\nmid D$ and $p=5$&$1$&$\times$ \\ 
\end{tabular} 
\end{center} 
Also, $g$ is $Sp(2;\bR)$-conjugate to 
$g_1:=\alpha (2\pi /5,4\pi /5)$, ${g_1}^{-1}=\alpha (-2\pi /5,-4\pi /5)$, 
$g_2:=\alpha (2\pi /5,-4\pi /5)$, and ${g_2}^{-1}=\alpha (-2\pi /5,4\pi /5)$. 
We can take all combinations of $Sp(2;\bR)$-conjugation and $G_p$-conjugations. 
\end{proof} 
We see from Proposition \ref{prop:H10} that $H_{10}=0$ if 
$D_1(2;5)\sqcup D_1(3;5)\sqcup D_2(4;5)\neq\emptyset$. 
In the other cases, we obtain from Proposition \ref{prop:H10} that 
\[ H_{10}=c_{k,j}\cdot \sum _{\{ g\}_G} J_0'(g)\cdot M_G(\cO )\cdot \prod_p c_p(g,R_p,\cO _p), \] 
\[ \sum _{\{ g\}_G} J_0'(g)=
\Big( J_0'(g_1)+J_0'(g_1^{-1})+J_0'(g_2)+J_0'(g_2^{-1})\Big) 
\cdot 2^{\sharp D_1(4;5)}. \] 
We have $C_0(g;Sp(2;\bR))=\{ 1_4\}$ for any $g$, and \\ 
\scalebox{0.9}[1]{
$J_0'(g_1)+J_0'({g_1}^{-1})+J_0'(g_2)+J_0'({g_2}^{-1})=
\left\{ \begin{array}{ccl} 
{[1,0,0,-1,0;5]_k}&\cdots & \mbox{if }j\equiv 0 \mbox{ mod } 10 \\ 
{[-1,1,0,0,0;5]_k}&\cdots & \mbox{if }j\equiv 2 \mbox{ mod } 10 \\ 
0&\cdots & \mbox{if }j\equiv 4 \mbox{ mod } 10 \\ 
{[0,0,0,1,-1;5]_k}&\cdots & \mbox{if }j\equiv 6 \mbox{ mod } 10 \\ 
{[0,-1,0,0,1;5]_k}&\cdots & \mbox{if }j\equiv 8 \mbox{ mod } 10 
\end{array} \right.$} . \\   
(cf. (b-1) in \cite{Wak}). 
We have 
\[ M_G(\cO )=\frac{1}{10}, \hspace{10pt} \text{(cf. (3.23) in \cite{Has84})} \] 
\[ \prod_p c_p(g,R_p,\cO _p)=2^{\sharp D_1(4;5)}\cdot 2^{\sharp D_2(2;5)}\cdot 2^{\sharp D_2(3;5)} \]  
for any $g$. 
Hence we can obtain $H_{10}$ as in Theorem \ref{thm:main}.

%%%%%%%%%%%%%%%%%%%%%%%%%%%%%%%%%%%%%%%%%%%%%%%%%%%%%%%%%%%%%%%%%%%%%%%%%%
%
\subsection{The contribution $\mathbf{H_{11}}$} 
%
%%%%%%%%%%%%%%%%%%%%%%%%%%%%%%%%%%%%%%%%%%%%%%%%%%%%%%%%%%%%%%%%%%%%%%%%%%

In this subsection, we evaluate the contribution of $G[f_{11}]$,   
where $f_{11}(x)=x^4+1$. 
Note that if $D(1;8)\not=\emptyset$, then $G[f_{11}]=\emptyset$. 
Hereafter, we assume $D(1;8)=\emptyset$. 
We have $Z(g)=\bQ (g)\simeq \bQ (\zeta_8)$ for any $g$. 
We have the following proposition. 
\begin{proposition} \label{prop:H11} \ \\ 
$(1)$\ If $D_2(7;8)\neq\emptyset$, 
then no $G$-conjugacy classes of $G[f_{11}]$ appear in 
the first sum of Theorem \ref{thm:elliptic}. \\ 
$(2)$\ If $D_2(7;8)=\emptyset$, 
then the followings hold. 

$(i)$\ The number of $G$-conjugacy classes in $G[f_{11}]$ 
which appear in the first sum of 
Theorem \ref{thm:elliptic} is $4\cdot 2^{\sharp D_1(7;8)}$. 

$(ii)$\ Let $\{ g\}_G$ be any one of them. 
If $\Lambda$ is a $\bZ$-order of $Z(g)$ belonging to the same $G$-genus as $\cO$, 
where $\cO$ is the ring of integers of $Z(g)$,  
then 
\[ \prod_p c_p(g,R_p,\Lambda_p)=\prod _{p \mid D \atop p\neq 2}2. \]  
If $\Lambda$ does not belong to the same $G$-genus as $\cO$, 
then $\prod_p c_p(g,R_p,\Lambda_p)=0$. 
\end{proposition}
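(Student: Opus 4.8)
The plan is to follow verbatim the strategy of the proofs of Propositions \ref{prop:H9} and \ref{prop:H10}, since here $Z(g)=\bQ(g)$ is again a field, namely $\bQ(\zeta_8)$. Being commutative, its only relevant $G$-genus is the one containing the maximal order $\cO$, so no genuine second sum over genera has to be disentangled, and the whole proposition reduces to local computation plus a gluing argument.

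First I would invoke the local-global criterion of Remark \ref{rem:elliptic}(2) (\cite[Theorem 1-3]{Has83}): a class $\{g\}_G$ meets $\Gamma$ exactly when $\{g\}_{G_p}\cap R_p\neq\emptyset$ for every $p$. I would then read off from the local results of \cite{HI80} and \cite{HI83} in the case $\bQ(\zeta_8)$ the local type of $g$ at each $p$, organized by the decomposition of $p$ in $\bQ(\zeta_8)$ (that is, by $p\bmod 8$) and by whether $p\mid D_1$, $p\mid D_2$, or $p\nmid D$. The residue class $7\equiv -1\pmod 8$ should be the decisive one, and I expect the local intersection $\{g\}_{G_p}\cap R_p$ to be empty precisely at primes $p\equiv 7\pmod 8$ dividing $D_2$; this would give part $(1)$, that $D_2(7;8)\neq\emptyset$ forces every global class to vanish.

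For part $(2)$ I would record that at each $p$ the element $g$ is $G_p$-conjugate to a single $\gamma_p$ in general, and to an additional $\delta_p$ exactly when $p\in D_1(7;8)$. On the archimedean side $g$ has eigenvalues the primitive eighth roots of unity $e^{\pm i\pi/4}$, $e^{\pm 3i\pi/4}$, hence is $Sp(2;\bR)$-conjugate to one of $g_1=\alpha(\pi/4,3\pi/4)$, $g_1^{-1}$, $g_2=\alpha(\pi/4,-3\pi/4)$, $g_2^{-1}$. Combining the four archimedean choices with the binary local choice at each prime of $D_1(7;8)$, and using the Hasse principle (\cite[Theorem 1-2]{Has80}) to glue each admissible family of local classes into a unique $G$-conjugacy class, I would obtain the count $4\cdot 2^{\sharp D_1(7;8)}$ of $(i)$. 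For $(ii)$ I would tabulate $c_p(g,R_p,\Lambda_p)$ for $\Lambda$ in the $G$-genus of $\cO$, with entries depending on $p\bmod 8$ and on divisibility by $D_1$ or $D_2$; the product over all $p$ should collapse to $\prod_{p\mid D,\,p\neq 2}2$, while the vanishing of $\prod_p c_p$ for $\Lambda$ outside this genus is immediate from the corresponding proposition of \cite{HI83}.

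The hard part will be the combinatorial bookkeeping that matches the local conjugacy choices at the primes $p\equiv 7\pmod 8$ with the choice of archimedean representative, so that the Hasse principle yields exactly the asserted number of global classes and not more; I also expect to have to check carefully that the prime $p=2$ and the primes in the remaining residue classes mod $8$ each contribute a clean factor, so that the product in $(ii)$ really reduces to $\prod_{p\mid D,\,p\neq 2}2$. As in the earlier subsections this recombination of the data of \cite{HI80} and \cite{HI83} is routine in principle but quite error-prone in practice.
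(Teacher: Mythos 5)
Your plan coincides with the paper's proof: parts (1) and the vanishing statement in (2)(ii) are read off from \cite[Proposition 2.9]{HI83}, the local data $c_p(\gamma_p,R_p,\cO_p)$ and $c_p(\delta_p,R_p,\cO_p)$ are tabulated from \cite[Proposition 20]{HI80} and \cite[Proposition 2.9]{HI83} according to $p\bmod 8$ and whether $p\mid D_1$, $p\mid D_2$ or $p\nmid D$, and the global count $4\cdot 2^{\sharp D_1(7;8)}$ is obtained by combining the four archimedean classes $\alpha(\pm\pi/4,\pm 3\pi/4)$ with the local choices via the Hasse principle. The only detail you gloss over that the paper makes explicit is that a second local class $\delta_p$ exists at $p=2$ and at every $p\equiv 7\bmod 8$ (not only those in $D_1(7;8)$; it simply has $c_p=0$ or is absorbed when $p\nmid D_1$), and that the archimedean choice between the $g_1$- and $g_2$-types is pinned down by whether $Z_0(g^2)$, with $g^2\in G[f_6]$, is indefinite or definite --- which is exactly the bookkeeping you flag as the error-prone step.
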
 
\begin{proof} 
We can prove (1) and the latter part of (2 ii) easily 
by \cite[Proposition 2.9]{HI83}. 
For any element $g$ of $G[f_{11}]$,  
$g$ is $G_p$-conjugate to 
\[ \left\{ \begin{array}{lcl} 
\gamma_p & \cdots & \mbox{ if } p\equiv 1,3\mbox{ or }5\mbox{ mod }8 \\ 
\gamma_p \mbox{ or } \delta_p & \cdots & \mbox{ if } p=2\mbox{ or }p\equiv 7\mbox{ mod }8 
\end{array} \right. \] 
for some elements $\gamma_p$ and $\delta_p \in G_p$. 
It follows from \cite[Proposition 20]{HI80} and \cite[Proposition 2.9]{HI83} that 
if $\Lambda$ is a $\bZ$-order of $Z(g)$ belonging to the same $G$-genus as $\cO$, 
then $c_p$ is as in the following table 
for each prime number $p$ satisfying the first column:  
\ \\ 
(i)\ If $p\mid D_1$, then we have 
\begin{center} 
\begin{tabular}{c|c|c} 
 & $c_p(\gamma_p,R_p,\cO_p)$&$c_p(\delta_p,R_p,\cO_p)$ \\ \hline 
$p\equiv 3\mbox{ mod }8$&$2$&$\times$  \\ 
$p\equiv 5\mbox{ mod }8$&$2$&$\times$  \\ 
$p\equiv 7\mbox{ mod }8$&$2$&$2$  \\ 
$p=2$&$1$&$1$  
\end{tabular} 
\end{center} 
(ii)\ If $p\mid D_2$, then we have 
\begin{center} 
\begin{tabular}{c|c|c} 
 & $c_p(\gamma_p,R_p,\cO_p)$&$c_p(\delta_p,R_p,\cO_p)$ \\ \hline 
$p\equiv 3\mbox{ mod }8$&$2$&$\times$  \\ 
$p\equiv 5\mbox{ mod }8$&$2$&$\times$  \\ 
$p\equiv 7\mbox{ mod }8$&$0$&$0$  \\ 
$p=2$&$1$&$1$  
\end{tabular} 
\end{center} 
(iii)\ If $p\nmid  D$, then we have 
\begin{center} 
\begin{tabular}{c|c|c} 
 & $c_p(\gamma_p,R_p,\cO_p)$&$c_p(\delta_p,R_p,\cO_p)$ \\ \hline 
$p\equiv 1\mbox{ mod }8$&$1$&$\times$  \\  
$p\equiv 3\mbox{ mod }8$&$1$&$\times$  \\ 
$p\equiv 5\mbox{ mod }8$&$1$&$\times$  \\ 
$p\equiv 7\mbox{ mod }8$&$1$&$0$  \\ 
$p=2$&$1$&$1$  
\end{tabular} 
\end{center} 
Also, $g$ is $Sp(2;\bR)$-conjugate to 
$g_1:=\alpha (\pi /4,3\pi /4)$, ${g_1}^{-1}=\alpha (-\pi /4,-3\pi /4)$, 
$g_2:=\alpha (\pi /4,-3\pi /4)$, or ${g_2}^{-1}=\alpha (-\pi /4,3\pi /4)$. 
Since $g^2$ belongs to $G[f_6]$, 
$g$ is $Sp(2;\bR)$-conjugate to $g_1$ or $g_1^{-1}$ if $Z_0(g^2)$ is indefinite, 
and $g$ is $Sp(2;\bR)$-conjugate to $g_2$ or $g_2^{-1}$ if $Z_0(g^2)$ is definite. 
We take all combinations of $G_p$-conjugacy classes for all $p$, and 
also take $Sp(2;\bR)$-conjugation out of ^^ ^^ $g_1$ or $g_1^{-1}$" or 
^^ ^^ $g_2$ or $g_2^{-1}$", 
according as $Z_0(g^2)$ is indefinite or definte. 
\end{proof} 
We see from Proposition \ref{prop:H11} that $H_{11}=0$ if 
$D_2(7;8)\neq\emptyset$. 
Hereafter, we assume $D_2(7;8)=\emptyset$. 
We obtain from Proposition \ref{prop:H11} that 
\[ H_{11}=c_{k,j}\cdot \sum _{\{ g\}_G} J_0'(g)\cdot M_G(\cO )\cdot \prod_p c_p(g,R_p,\cO _p), \] 
\[ \sum _{\{ g\}_G} J_0'(g)=
\Big( J_0'(g_1)+J_0'(g_1^{-1})+J_0'(g_2)+J_0'(g_2^{-1})\Big) 
\cdot 2^{\sharp D_1(7;8)}. \] 
We have $C_0(g;Sp(2;\bR))=\{ 1_4\}$ for any $g$, and \\ 
\scalebox{0.9}[1]{
$J_0'(g_1)+J_0'({g_1}^{-1})+J_0'(g_2)+J_0'({g_2}^{-1})=
\left\{ \begin{array}{ccl} 
{[1,0,0,-1;4]_k}&\cdots & \mbox{if }j\equiv 0 \mbox{ mod } 8 \\ 
{[-1,1,0,0;4]_k}&\cdots & \mbox{if }j\equiv 2 \mbox{ mod } 8 \\ 
{[-1,0,0,1;4]_k}&\cdots & \mbox{if }j\equiv 4 \mbox{ mod } 8 \\ 
{[1,-1,0,0;4]_k}&\cdots & \mbox{if }j\equiv 6 \mbox{ mod } 8 
\end{array} \right.$} . \\   
(cf. (b-1) in \cite{Wak}). 
We have 
\[ M_G(\cO )=\frac{1}{8}, \hspace{10pt} \text{(cf. (3.25) in \cite{Has84})} \] 
\[ \prod_p c_p(g,R_p,\cO _p)=\prod _{p \mid D \atop p\neq 2}2 \]  
for any $g$. 
Hence we can obtain $H_{11}$ as in Theorem \ref{thm:main}.

%%%%%%%%%%%%%%%%%%%%%%%%%%%%%%%%%%%%%%%%%%%%%%%%%%%%%%%%%%%%%%%%%%%%%%%%%%
%
\subsection{The contribution $\mathbf{H_{12}}$} \label{subsec:H12} 
%
%%%%%%%%%%%%%%%%%%%%%%%%%%%%%%%%%%%%%%%%%%%%%%%%%%%%%%%%%%%%%%%%%%%%%%%%%%

In this subsection, we evaluate the contribution of $G[f_{12}]$, 
where $f_{12}(x)=x^4-x^2+1$. 
Note that $G[f_{12}]=\emptyset$ if and only if $D(1;12)\not=\emptyset$. 
Hereafter, we assume $D(1;12)=\emptyset$. 
We see from \cite[Proposition 21]{HI80} and \cite[Proposition 2.10]{HI83} that 
if $D_2(11;12)\not=\emptyset$, then no $G$-conjugacy classes of $G[f_{12}]$ 
appear in the first sum of Theorem \ref{thm:elliptic}. 
Hereafter, we assume that $D_2(11;12)=\emptyset$. 

The set $G[f_{12}]$ consists of four $Sp(2;\bR)$-conjugacy classes represented by 
$h:=\alpha (\pi /6,5\pi /6)$, $h^{-1}=\alpha (-\pi /6,-5\pi /6)$, 
$h':=\alpha (\pi /6,-5\pi /6)$, $h'^{-1}=\alpha (-\pi /6,5\pi /6)$. 
We have $C_0(g;Sp(2;\bR ))=\{ 1_4\}$ for any $g\in G[f_{12}]$ and 
\begin{align*} 
J_0'(h)+J_0'(h^{-1}) &= (-1)^{j/2+k}\cdot [1,-1,0;3]_j,  \\ 
J_0'(h')+J_0'(h'^{-1}) &= (-1)^{j/2}\cdot [0,-1,1;3]_{j+2k}. 
\end{align*} 
(cf. (b-1) in \cite{Wak}). 
Only one $G$-genus represented by $\cO$, 
where $\cO$ is the ring of integers of $Z(g)\simeq \bQ (\zeta_{12})$, 
appears in the second sum of Theorem \ref{thm:elliptic} 
and $M_G(\cO)=\frac{1}{12}$ \ (cf. (3.27) in \cite{Has84}). 
If $g$ is an element of $G[f_{12}]$, then $g^2$ belongs to $G[f_7]$. 
We can obtain the following proposition from \cite[Proposition 21]{HI80} and \cite[Proposition 2.10]{HI83}. 

\begin{proposition} 
\ \\ 
$(I)$\ the case where $D_1(11;12)=\emptyset$ and $\sharp D(5;12)$ is even $($resp. odd$)$ \\ 
$(i)$\ the case where $2\nmid D$ and $3\nmid D_1:$ \\ 
Two $G$-conjugacy classes $\{ g\}_G$ and $\{ g^{-1}\}_G$ appear 
in the first sum of Theorem \ref{thm:elliptic}. 
They are $Sp(2;\bR)$-conjugate to $h'$ and $h'^{-1}$ $($resp. $h$ and $h^{-1})$ 
respectively, and 
\[ \prod _p c_p(g,R_p,\cO _p)=\prod _p c_p(g^{-1},R_p,\cO _p)=
\prod _{p\in D(5;12)}2 \cdot \prod _{p\in D(7;12)}2. \]  
$(ii)$\ the case where $2\mid D_2$ and $3\nmid D_1:$ \\ 
Two $G$-conjugacy classes $\{ g\}_G$ and $\{ g^{-1}\}_G$ appear 
in the first sum of Theorem \ref{thm:elliptic}. 
They are $Sp(2;\bR)$-conjugate to $h$ and $h^{-1}$ $($resp. $h'$ and $h'^{-1})$ 
respectively, and 
\[ \prod _p c_p(g,R_p,\cO _p)=\prod _p c_p(g^{-1},R_p,\cO _p)=
\prod _{p\in D(5;12)}2 \cdot \prod _{p\in D(7;12)}2. \]  
$(iii)$\ the case where $2\nmid D_1$ and $3\mid D_1:$ \\ 
Four $G$-conjugacy classes appear in the first sum of Theorem \ref{thm:elliptic}. 
They are $Sp(2;\bR)$-conjugate to $h,h^{-1},h',h'^{-1}$ respectively. \\ 
If $g$ is $Sp(2;\bR)$-conjugate to $h$ $($resp. $h')$, then 
\[ \prod _p c_p(g,R_p,\cO _p)=\prod _p c_p(g^{-1},R_p,\cO _p)=
\prod _{p\in D(5;12)}2 \cdot \prod _{p\in D(7;12)}2. \]  
If $g$ is $Sp(2;\bR)$-conjugate to $h'$ $($resp. $h)$, then 
\[ \prod _p c_p(g,R_p,\cO _p)=\prod _p c_p(g^{-1},R_p,\cO _p)=
2\cdot \prod _{p\in D(5;12)}2 \cdot \prod _{p\in D(7;12)}2. \]  
$(iv)$\ the case where $2\nmid D_1$ and $3\mid D_1:$ \\ 
Four $G$-conjugacy classes appear in the first sum of Theorem \ref{thm:elliptic}. 
They are $Sp(2;\bR)$-conjugate to $h,h^{-1},h',h'^{-1}$ respectively. \\ 
If $g$ is $Sp(2;\bR)$-conjugate to $h$ $($resp. $h')$, then 
\[ \prod _p c_p(g,R_p,\cO _p)=\prod _p c_p(g^{-1},R_p,\cO _p)=
2\cdot \prod _{p\in D(5;12)}2 \cdot \prod _{p\in D(7;12)}2. \]  
If $g$ is $Sp(2;\bR)$-conjugate to $h'$ $($resp. $h)$, then 
\[ \prod _p c_p(g,R_p,\cO _p)=\prod _p c_p(g^{-1},R_p,\cO _p)=
\prod _{p\in D(5;12)}2 \cdot \prod _{p\in D(7;12)}2. \]  
$(v)$\ the case where $2\mid D_1$ and $3\nmid D_1:$ \\ 
Four $G$-conjugacy classes appear in the first sum of Theorem \ref{thm:elliptic}. 
They are $Sp(2;\bR)$-conjugate to $h,h^{-1},h',h'^{-1}$ respectively. \\ 
If $g$ is $Sp(2;\bR)$-conjugate to $h$ $($resp. $h')$, then 
\[ \prod _p c_p(g,R_p,\cO _p)=\prod _p c_p(g^{-1},R_p,\cO _p)=
2\cdot \prod _{p\in D(5;12)}2 \cdot \prod _{p\in D(7;12)}2. \]  
If $g$ is $Sp(2;\bR)$-conjugate to $h'$ $($resp. $h)$, then 
\[ \prod _p c_p(g,R_p,\cO _p)=\prod _p c_p(g^{-1},R_p,\cO _p)=
\prod _{p\in D(5;12)}2 \cdot \prod _{p\in D(7;12)}2. \]  
$(vi)$\ the case where $2\mid D_1$ and $3\mid D_1:$ \\ 
Eight $G$-conjugacy classes appear in the first sum of Theorem \ref{thm:elliptic}. 
Each two of them are $Sp(2;\bR)$-conjugate to $h,h^{-1},h',h'^{-1}$ respectively. \\ 
If $g$ is $Sp(2;\bR)$-conjugate to $h$ $($resp. $h')$, then 
\[ \prod _p c_p(g,R_p,\cO _p)=\prod _p c_p(g^{-1},R_p,\cO _p)= 
\begin{cases} 
4\cdot \prod\limits _{p\in D(5;12)}2 \cdot \prod\limits _{p\in D(7;12)}2 \\ 
\prod\limits _{p\in D(5;12)}2 \cdot \prod\limits _{p\in D(7;12)}2. 
\end{cases} \]  
If $g$ is $Sp(2;\bR)$-conjugate to $h'$ $($resp. $h)$, then 
\[ \prod _p c_p(g,R_p,\cO _p)=\prod _p c_p(g^{-1},R_p,\cO _p)= 
\begin{cases} 
2\cdot \prod\limits _{p\in D(5;12)}2 \cdot \prod\limits _{p\in D(7;12)}2 \\ 
2\cdot \prod\limits _{p\in D(5;12)}2 \cdot \prod\limits _{p\in D(7;12)}2. 
\end{cases} \]  
$(II)$\ the case where $D_1(11;12)\neq \emptyset$ \\ 
$(i)$\ the case where $2\nmid D_1$ and $3\nmid D_1:$ \\ 
The number of $G$-conjugacy classes which appear in the first sum of Theorem \ref{thm:elliptic} is $2^{\sharp D_1(11;12) +1}$. 
They are $Sp(2;\bR)$-conjugate to $h,h^{-1},h',h'^{-1}$. 
All of them  satisfy 
\[ \prod\limits_p c_p(g,R_p,\cO_p)=\prod\limits_{p\in D(5;12)}2\cdot \prod\limits_{p\in D(7;12)}2\cdot \prod\limits_{p\in D_1(11;12)}2.  \] 
$(ii)$\ the case where ^^ ^^ $2\mid D_1$ and $3\nmid D_1$" of ^^ ^^ $2\nmid D_1$ and $3\mid D_1$" $:$ \\ 
The number of $G$-conjugacy classes which appear in the first sum of Theorem \ref{thm:elliptic} is $2^{\sharp D_1(11;12) +2}$. 
They are $Sp(2;\bR)$-conjugate to $h,h^{-1},h',h'^{-1}$. 
In each case,  \\ 
$2^{\sharp D_1(11;12) -1}$ $G$-conjugacy classes satisfy 
\[ \prod\limits_p c_p(g,R_p,\cO_p)=\prod\limits_{p\in D(5;12)}2\cdot \prod\limits_{p\in D(7;12)}2\cdot \prod\limits_{p\in D_1(11;12)}2\cdot 2, \] 
$2^{\sharp D_1(11;12) -1}$ $G$-conjugacy classes satisfy  
\[ \prod\limits_p c_p(g,R_p,\cO_p)=\prod\limits_{p\in D(5;12)}2\cdot \prod\limits_{p\in D(7;12)}2\cdot \prod\limits_{p\in D_1(11;12)}2.  \] 
$(iii)$\ the case where $2\mid D_1$ and $3\mid D_1:$ \\ 
The number of $G$-conjugacy classes which appear in the first sum of Theorem \ref{thm:elliptic} is $2^{\sharp D_1(11;12) +3}$. 
They are $Sp(2;\bR)$-conjugate to $h,h^{-1},h',h'^{-1}$. 
In each case,  \\ 
$2^{\sharp D_1(11;12) -1}$ $G$-conjugacy classes satisfy 
\[ \prod\limits_p c_p(g,R_p,\cO_p)=\prod\limits_{p\in D(5;12)}2\cdot \prod\limits_{p\in D(7;12)}2\cdot \prod\limits_{p\in D_1(11;12)}2\cdot 2^2, \] 
$2^{\sharp D_1(11;12)}$ $G$-conjugacy classes satisfy  
\[ \prod\limits_p c_p(g,R_p,\cO_p)=\prod\limits_{p\in D(5;12)}2\cdot \prod\limits_{p\in D(7;12)}2\cdot \prod\limits_{p\in D_1(11;12)}2\cdot 2, \] 
$2^{\sharp D_1(11;12) -1}$ $G$-conjugacy classes satisfy  
\[ \prod\limits_p c_p(g,R_p,\cO_p)=\prod\limits_{p\in D(5;12)}2\cdot \prod\limits_{p\in D(7;12)}2\cdot \prod\limits_{p\in D_1(11;12)}2. \] 
\end{proposition}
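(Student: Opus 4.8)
The plan is to follow the same local-to-global strategy already used for Propositions \ref{prop:H9}, \ref{prop:H10} and \ref{prop:H11}, now feeding in the local data of \cite[Proposition 21]{HI80} and \cite[Proposition 2.10]{HI83} for the field $Z(g)\simeq\bQ(\zeta_{12})$. First I would record, for each prime $p$, the local conjugacy picture. By the local-global criterion (\cite[Theorem 1-3]{Has83}) the class $\{g\}_G$ contributes to the first sum of Theorem \ref{thm:elliptic} precisely when $\{g\}_{G_p}\cap R_p\neq\emptyset$ for all $p$, so I would tabulate from the cited propositions, case by case according to $p\bmod 12$ and according to whether $p\mid D_1$, $p\mid D_2$ or $p\nmid D$: (a) whether $\{g\}_{G_p}\cap R_p\neq\emptyset$; (b) the number of $G_p$-conjugacy representatives meeting $R_p$ (a single $\gamma_p$, or both $\gamma_p$ and $\delta_p$); and (c) the value of $c_p(g,R_p,\cO_p)$ for each such representative. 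The ramified primes $p=2$ and $p=3$ must be handled separately, since they behave differently from the split primes $p\equiv 5,7,11\bmod 12$; the standing hypotheses $D(1;12)=\emptyset$ and $D_2(11;12)=\emptyset$ (made just before the proposition) guarantee that the local sets are nonempty at the remaining primes.

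Next I would pin down the archimedean data. Since $G[f_{12}]$ splits into the four $Sp(2;\bR)$-conjugacy classes of $h,h^{-1},h',h'^{-1}$ recorded above, the only remaining freedom is to decide into which of the two families a given $G$-class falls. As in the treatment of $H_9$ and $H_{11}$, this is governed by the square: $g^2\in G[f_7]$, and $g$ is $Sp(2;\bR)$-conjugate to $h$ or $h^{-1}$ (resp. $h'$ or $h'^{-1}$) according as the quaternion algebra $Z_0(g^2)$ attached to $g^2$ by the correspondence used for $H_7$ is indefinite or definite. The definiteness of $Z_0(g^2)$ is itself read off from its ramification, and this is exactly what is encoded by the parity of $\sharp D(5;12)$ together with the $2$-adic and $3$-adic data. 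This explains why the top-level dichotomy of the proposition is ``$D_1(11;12)=\emptyset$ with $\sharp D(5;12)$ even versus odd'' against ``$D_1(11;12)\neq\emptyset$''.

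With the local multiplicities and the archimedean constraint in hand, I would globalize by the Hasse principle (\cite[Theorem 1-2]{Has80}): every choice of a local representative at each $p$ (from the list produced in the first step) that is compatible with the prescribed $Sp(2;\bR)$-conjugacy class determines a unique $G$-conjugacy class, and conversely. Counting these compatible choices yields the stated number of contributing classes ($2^{\sharp D_1(11;12)+\varepsilon}$ in case (II), and $2$, $4$ or $8$ in the various subcases of (I)), while multiplying the factors $c_p$ over all $p$ produces the displayed products $\prod_p c_p(g,R_p,\cO_p)$. The extra powers of $2$ come precisely from the primes $p\in D(5;12)$, $p\in D(7;12)$ and $p\in D_1(11;12)$ where two local representatives survive, supplemented by the special contributions at $p=2,3$ when $2\mid D_1$ or $3\mid D_1$.

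The hard part will be the bookkeeping rather than any single conceptual step. One must correctly factor the global product $\prod_p c_p$ into its local pieces, keep exact track of how many local classes survive at the ramified primes $2,3$ and at the split primes in each residue class mod $12$, and then match the number of compatible global combinations against the $Sp(2;\bR)$-family forced by $Z_0(g^2)$. Verifying that the resulting division collapses to exactly the subcases (I)(i)--(vi) and (II)(i)--(iii), with precisely the stated powers of $2$ in each, is where all the delicacy lies; the underlying geometry is entirely supplied by \cite{HI80}, \cite{HI83}, \cite{Has80} and \cite{Has83}.
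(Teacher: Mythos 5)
Your proposal follows essentially the same route the paper takes: the paper states this proposition with no written proof beyond the citation of \cite[Proposition 21]{HI80} and \cite[Proposition 2.10]{HI83}, and the method you describe (tabulating the local data $c_p$ and the number of local representatives by residue class mod $12$, fixing the $Sp(2;\bR)$-class via $g^2\in G[f_7]$ and the definiteness of $Z_0(g^2)$, then globalizing by the Hasse principle of \cite[Theorem 1-2]{Has80}) is exactly the argument the author carries out explicitly for the analogous Propositions \ref{prop:H9}, \ref{prop:H10} and \ref{prop:H11}. The remaining work is, as you say, the case-by-case bookkeeping, which is all that separates your outline from a complete verification of the stated tables.
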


%%%%%%%%%%%%%%%%%%%%%%%%%%%%%%%%%%%%%%%%%%%%%%%%%%%%%%%%%%%%%%%%%
%%%%%%%%%%%%%%%%%%%%%%%%%%%%%%%%%%%%%%%%%%%%%%%%%%%%%%%%%%%%%%%%%
%
\section{The contribution of non-semi-simple conjugacy classes} 
\label{sec:nonsemisimple} 
%
%%%%%%%%%%%%%%%%%%%%%%%%%%%%%%%%%%%%%%%%%%%%%%%%%%%%%%%%%%%%%%%%%
%%%%%%%%%%%%%%%%%%%%%%%%%%%%%%%%%%%%%%%%%%%%%%%%%%%%%%%%%%%%%%%%%

In this section, we evaluate $I(\Gamma ^{(u)})_{k,j}$ and $I(\Gamma^{(qu)})_{k,j}$,  
i.e. the contributions of non-semi-simple conjugacy classes   
(cf. section \ref{seintro}).  
We prove $I_1$, $I_2$ and $I_3$ of Theorem \ref{thm:main}. 
Since the class number of $\fO$ is one, 
any maximal two-sided ideal $\fA$ can be written as 
$\fA =\fO\pi =\pi\fO$ for some $\pi\in\fO$. 
By taking conjugation by $\begin{pmatrix} \pi & 0 \\ 0 & \overline{\pi}^{-1} \end{pmatrix} \in G$, 
we may regard $\Gamma =G\cap \begin{pmatrix} \fO & \fA \\ \fA ^{-1} & \fO \end{pmatrix}$.   

Let $P$ be the unique parabolic subgroup of $G$, up to $G$-conjugation, i.e. 
\begin{align*} 
P=\left\{ \left. \begin{pmatrix} a&0\\0&\overline{a}^{-1} \end{pmatrix} 
\begin{pmatrix} 1&b\\0&1 \end{pmatrix} \right| a\in B^{\times}, b\in B^0 \right\}. 
\end{align*} 
We can prove that $\Gamma\backslash \fH_2$ has only one $0$-dimensional cusp 
in the same way as \cite[Proposition 2]{Ara81}. 
Arakawa proved Lemma \ref{lem:cusp} below in his master thesis \cite[Proposition 7]{Ara75}. 
\begin{lemma} \label{lem:cusp} 
We have $G=P\cdot \Gamma$. 
\end{lemma}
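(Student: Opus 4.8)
The plan is to read $G=P\cdot\Gamma$ as a transitivity statement about $0$-dimensional cusps and then localize it. Put $e_1=(1,0)$, $e_2=(0,1)$ in $W=B\oplus B$ and let $\ell_0=Be_2$; since $f(e_2,e_2)=0$ this is an isotropic line, and $P$ is precisely its stabilizer in $G$: for $p=\begin{pmatrix} a & 0\\ 0 & \overline a^{-1}\end{pmatrix}\begin{pmatrix} 1 & b\\ 0 & 1\end{pmatrix}\in P$ one computes $e_2p=(0,\overline a^{-1})\in\ell_0$. As $G$ acts on $W$ on the right, a factorization $g=p\gamma$ ($p\in P,\ \gamma\in\Gamma$) gives $\ell_0 g=\ell_0\gamma$, and conversely $\ell_0 g=\ell_0\gamma$ forces $g\gamma^{-1}\in\mathrm{Stab}_G(\ell_0)=P$. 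Hence $G=P\cdot\Gamma$ is equivalent to $\#\bigl(P\backslash G/\Gamma\bigr)=1$, i.e.\ to the assertion that $\Gamma$ acts transitively on the isotropic lines lying in the $G$-orbit of $\ell_0$. This is exactly the statement that $\Gamma\backslash\fH_2$ has a single $0$-dimensional cusp.

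Next I would attach a lattice invariant to each line and pass to local data. With $L=\fA^{-1}e_1\oplus\fO e_2$ the maximal lattice satisfying $U(2;B)_L=\Gamma$, an isotropic line $\ell$ meets $L$ in a saturated rank-one $\fO$-module $\ell\cap L$, which is a left $\fO$-ideal; since the class number of $\fO$ is one it is principal, $\ell\cap L=\fO v$ with $v$ primitive and isotropic (and $\ell_0\cap L=\fO e_2$). Moving $\ell$ to $\ell_0$ by $\Gamma$ then amounts to completing $v$ to a hyperbolic basis $\{u,v\}$ with $f(u,v)=1$, $f(u,u)=0$ and $\fA^{-1}u\oplus\fO v=L$, for the matrix with rows $u,v$ lies in $\Gamma$ and carries $e_2$ to $v$. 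Such a completion is a condition at each prime, so I would invoke strong approximation: $G$ is a $\bQ$-form of the simply connected group $Sp_4$ and $G(\bR)\cong Sp(2;\bR)$ is noncompact, so $G(\bQ)$ is dense in $G(\mathbb{A}_f)$ and the class number of $G$ relative to $K=\prod_p K_p$ (with $K_p=\mathrm{Stab}(L_p)$) is one. Because both $G$ and the Levi subgroup $B^\times$ of $P$ have class number one — the latter as $\mathrm{cl}(\fO)=1$ — the set $P\backslash G/\Gamma$ is in bijection with $\prod_p P(\bQ_p)\backslash G(\bQ_p)/K_p$, and it therefore suffices to prove that each $K_p$ acts transitively on the isotropic lines of $W\otimes\bQ_p$.

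The local transitivity is routine at the unramified primes and the main work sits at the primes dividing $D_2$. For $p\nmid D$ one has $B\otimes\bQ_p\cong M(2;\bQ_p)$, so $W\otimes\bQ_p$ is a split symplectic space, $L_p$ is a self-dual maximal lattice, and $K_p\cong Sp_4(\bZ_p)$ acts transitively on primitive isotropic vectors by the classical reduction theory; for $p\mid D_1$ the lattice $L_p=(\fO_p,\fO_p)g_p$ is the standard maximal lattice over the division algebra $B\otimes\bQ_p$ and the same transitivity holds. The genuinely new and hardest case is $p\mid D_2$, where $L_p=(\pi\fO_p,\fO_p)g_p$ is the non-self-dual maximal lattice singled out by Shimura's classification \cite{Shi63}: here one must check that its (parahoric) stabilizer still acts transitively on the local isotropic lines. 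This is precisely the type of local computation underlying the coefficients $c_p$ of Hashimoto and Ibukiyama, and I expect it to be the principal obstacle; it is also exactly the point where the present $D_2\neq1$ situation departs from the classical $D_2=1$ case. Once transitivity holds at every prime, the displayed bijection gives $\#(P\backslash G/\Gamma)=1$ and hence $G=P\cdot\Gamma$; equivalently, one may run Arakawa's elementary hyperbolic-completion argument \cite{Ara75}, in which the same local input at the divisors of $D_2$ is the decisive step.
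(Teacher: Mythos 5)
Your reduction of $G=P\cdot\Gamma$ to the one-cusp statement $\sharp(P\backslash G/\Gamma)=1$, and then via strong approximation to local transitivity of $K_p$ on isotropic lines, is a legitimate strategy — but as written it is not a proof, because you leave the decisive step unestablished. The entire content of the lemma, in your formulation, is concentrated in the claim that for $p\mid D_2$ the stabilizer of the non-self-dual maximal lattice, $K_p=R_p^{\times}\cap G_p$ with $R_p=\left(\begin{smallmatrix}\fO_p&\pi^{-1}\fO_p\\ \pi\fO_p&\fO_p\end{smallmatrix}\right)$, still satisfies the Iwasawa-type decomposition $G_p=P_pK_p$; you explicitly defer this ("I expect it to be the principal obstacle"), so the argument has a genuine gap exactly where the $D_2\neq 1$ case differs from the known one. (The claim is in fact true — $G_p$ has relative rank one, its building is a tree, both types of vertices are special, so both maximal parahorics give an Iwasawa decomposition — but none of that is in your write-up.) A secondary soft spot is the asserted bijection $P\backslash G/\Gamma\simeq\prod_pP(\bQ_p)\backslash G(\bQ_p)/K_p$: the fibres of the natural map are governed by a class set attached to the stabilizer of each local orbit, and invoking "class number one for $G$ and for the Levi $B^{\times}$" points at the right ingredients without actually carrying out the fibration argument.

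For contrast, the paper avoids all local and adelic machinery. Given $g=\left(\begin{smallmatrix}a&b\\c&d\end{smallmatrix}\right)\in G$, it uses $\mathrm{cl}(\fO)=1$ (and $\fA=\fO\pi=\pi\fO$) to write $c^{-1}d=\pi^{-1}\gamma^{-1}\delta$ with $\gamma,\delta\in\fO$ admitting $u,v\in\fO$ with $\gamma u+\delta v=1$, and then writes down an explicit matrix $\tau\in\Gamma$ with bottom row $(\gamma\pi,\delta)$, so that $g\tau^{-1}\in P$ — the quaternionic analogue of the elementary proof that $SL_2(\bZ)$ acts transitively on $\mathbb{P}^1(\bQ)$. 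If you want to salvage your route, you must either prove the local transitivity at $p\mid D_2$ directly (a short computation with the parahoric $R_p$, or an appeal to the building-theoretic fact above) or replace the whole local–global apparatus by such an explicit completion of a primitive isotropic vector to a basis adapted to $L$, which is what Arakawa's argument, cited in the paper, actually does.
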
 
\begin{proof} 
We take any $\gamma=\begin{pmatrix} a&b\\c&d \end{pmatrix} \in G$. 
There are some $\gamma$, $\delta \in \fO$ 
such that $c^{-1}d=\pi ^{-1}\gamma^{-1}\delta$. 
We can assume that there are some $u,v\in\fO$ such that $\gamma u+\delta v=1$. 
If we put 
\begin{align*} 
\tau :=\begin{pmatrix} 
\overline{v}-\overline{\pi^{-1}u}v\gamma\pi & \overline{\pi^{-1}u}(1-v\delta ) \\ 
\gamma\pi & \delta 
\end{pmatrix}, 
\end{align*} 
then we have $\tau\in\Gamma$ and $\sigma\tau^{-1}\in P$. 
\end{proof} 
By using Lemma \ref{lem:cusp}, 
we can prove Proposition \ref{prop:normalform}  below 
in the same way that Hashimoto proved it when $D_2=1$ in \cite[Lemma 1.2]{Has84}. 
\begin{proposition} \label{prop:normalform} 
If $\gamma$ is an element of $\Gamma^{(u)}\sqcup \Gamma^{(qu)}$, 
then $\gamma$ is $\Gamma$-conjugate to an element of 
the form: 
\begin{align*} 
\gamma (a,b) =\begin{pmatrix} a&0\\0&a \end{pmatrix} 
\begin{pmatrix} 1&b\\0&1 \end{pmatrix}, 
\end{align*} 
where $a\in\fO^{\times}$ is a root of unity and $b\in\fA^0-\{ 0\}$.  
\end{proposition}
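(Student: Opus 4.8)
The plan is to follow the strategy of Hashimoto's \cite[Lemma 1.2]{Has84}, which treats the case $D_2=1$, and adapt it to the general two-sided ideal $\fA$. The starting observation is that any $\gamma\in\Gamma^{(u)}\sqcup\Gamma^{(qu)}$ is, by definition (cf.\ (ii) and (iii) of the classification in Section \ref{seintro}), a non-semi-simple element whose semi-simple part is a torsion element. Writing $\gamma=\gamma_s\gamma_u$ for its Jordan decomposition, with $\gamma_s$ semi-simple torsion and $\gamma_u$ unipotent and commuting with $\gamma_s$, the fixed point set of $\gamma$ on $\fH_2$ is non-empty (since $\gamma$ has eigenvalue $1$ with multiplicity $\ge 2$ forced by non-semi-simplicity together with the torsion condition on $\gamma_s$), and $\gamma$ stabilizes a rational boundary component. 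First I would use this to reduce $\gamma$ into the parabolic $P$: the unipotent part $\gamma_u$ is supported along the unique cusp, and Lemma \ref{lem:cusp} ($G=P\cdot\Gamma$) guarantees that, after a $\Gamma$-conjugation, I may assume $\gamma\in P$ itself.

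Next I would write the conjugated element explicitly inside $P$. An element of $P\cap\Gamma$ has the shape $\begin{pmatrix} a&0\\0&\overline{a}^{-1}\end{pmatrix}\begin{pmatrix}1&b\\0&1\end{pmatrix}$ with $a\in\fO^{\times}$ and $b$ in the appropriate ideal. The condition that $\gamma$ lie in the hermitian group $U(2;B)$ forces $b\in B^0$ (the pure quaternions, where the canonical involution acts by $-1$), and after clearing the diagonal by a further conjugation I would arrange the diagonal block to be the \emph{scalar} $a$ on both entries, i.e.\ of the form $\gamma(a,b)$ with $\begin{pmatrix}a&0\\0&a\end{pmatrix}$. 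The torsion hypothesis on the semi-simple part $\gamma_s$ then pins $a$ down to be a root of unity in $\fO^{\times}$. The remaining point is to locate $b$: tracing the lattice conditions $a,d\in\fO$, $b\in\fA^{-1}$, $c\in\fA$ through the conjugation and imposing $a=1$ on the diagonal block (so that $\gamma$ is genuinely non-semi-simple rather than semi-simple) shows $b$ lands in $\fA^0:=\fA\cap B^0$, and $b\ne 0$ precisely because $\gamma$ is not semi-simple.

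The step I expect to be the main obstacle is controlling the centralizer action that allows one to normalize $b$ and to verify it lands in $\fA^0$ rather than merely in $\fA^{-1}$ or $\fO$. In Hashimoto's $D_2=1$ setting $\fA=\fO$, so the ideal bookkeeping is trivial; here $\fA=\pi\fO=\fO\pi$ is a genuine two-sided ideal, and one must check that the conjugations used to put $\gamma$ in normal form respect the refined lattice $\begin{pmatrix}\fO&\fA\\ \fA^{-1}&\fO\end{pmatrix}$ and do not move $b$ out of $\fA^0$. Concretely, after conjugating by a diagonal element $\begin{pmatrix}u&0\\0&\overline{u}^{-1}\end{pmatrix}$ with $u\in\fO^{\times}$ to reduce $a$ to a canonical root of unity, the off-diagonal entry $b$ transforms by $b\mapsto u\,b\,\overline{u}$ (or a similar twisted conjugation), and one must confirm this keeps $b$ inside $\fA^0$; this requires using that $\fA$ is two-sided and $\fO$-stable under the relevant involution.

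Since $B$ has class number one, the reductions can be carried out integrally, which is what makes the normal form $\gamma(a,b)$ canonical up to the residual $\Gamma$-conjugacy to be analyzed in Propositions \ref{Proposition:Unipotent1}, \ref{Prop:ClassifyQU} and \ref{prop:classifyI3}. I would close by remarking that the whole argument is formally identical to \cite[Lemma 1.2]{Has84} once the ideal-theoretic points above are verified, so only the modifications forced by $D_2\ne 1$ need to be spelled out.
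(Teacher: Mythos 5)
Your proposal follows essentially the same route as the paper, which itself only records that Proposition \ref{prop:normalform} is proved "by using Lemma \ref{lem:cusp} in the same way that Hashimoto proved it when $D_2=1$ in \cite[Lemma 1.2]{Has84}"; your reduction into $P$ via $G=P\cdot\Gamma$, the identification of $P\cap\Gamma$, and the observation that the only new content is checking that the two-sided, involution-stable ideal $\fA$ is preserved under the normalizing conjugations (so that $b$ lands in $\fA^0=\fA\cap B^0$) is exactly the intended argument. One harmless slip: the parenthetical claim that $\gamma$ has a fixed point in $\fH_2$ is false for genuinely unipotent elements (translations have no interior fixed points); what you actually use, correctly, is that $\gamma$ stabilizes a rational boundary component.
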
  

If $\gamma\in\Gamma^{(u)}$, then $a=\pm 1$ and the principal polynomial of $\gamma$ 
is $f_1(x)=(x-1)^4$ or $f_1(-x)$. 
We put $I_1=I(\Gamma^{(u)})_{k,j}$. 
If $\gamma\in\Gamma^{(qu)}$, then $a$ is a primitive 4-th, 3-rd, or 6-th root of unity  
and the principal polynomial of $\gamma$ is 
$f_6(x)=(x^2+1)^2$, $f_7(x)=(x^2+x+1)^2$ or $f_7(-x)$ respectively. 
We denote by $I_2$ (resp. $I_3$) the contribution of elements of 
$\Gamma^{(qu)}$ whose principal polynomial is $f_6(x)$ (resp. $f_7(\pm x)$).   
We evaluate $I_1$ in subsection \ref{subsec:I1}, and 
$I_2$ and $I_3$ in subsection \ref{subsec:I2} and \ref{subsec:I3}. 
We use the notation 
\[ \gamma (\theta , t):= \begin{pmatrix} \cos\theta & \sin\theta & 0&0 \\ -\sin\theta & \cos\theta &0&0 \\ 0&0&\cos\theta & \sin\theta \\ 0&0&-\sin\theta &\cos\theta \end{pmatrix} \begin{pmatrix} 1&0&t&0 \\ 0&1&0&t \\ 0&0&1&0 \\ 0&0&0&1 \end{pmatrix} . \] 

We summarize some lemmas which are used in subsection \ref{subsec:I2}. 
These lemmas were proved in the case of $D_2=1$ by Hashimoto \cite{Has84}. 
The following lemmas are  easy generalizations of them and can be proved 
in the almost same method, so we omit the proof. 
Let $a$ be a primitive 3-rd or 4-th or 6-th root of unity. 
We put $F:=\bQ (a)$ and denote by $\mathcal{O}_F$ the ring of integers of $F$ 
and by $d$ the discriminant of $F$. 
$F$ is isomorphic to $\bQ (\sqrt{-1})$ or $\bQ (\sqrt{-3})$. 
\begin{lemma} 
Let $\gamma $ be an element of $\Gamma $ of the form 
$\gamma (a,b)$ in Proposition \ref{prop:normalform}. 
Then we have \\ 
(1)\ If $\beta $ is an element of $B^{\times }$ such that 
$\beta a=\overline{a}\beta$, then we have $B=F\oplus F\beta$. \\ 
(2)\ If we express $b\in \mathfrak{A}^0$ as 
$b=x\sqrt{d}+y\beta$\ $(x\in \bQ , y\in F)$, 
then the Jordan decomposition $\gamma (a,b)=\gamma _s\cdot \gamma _u$ 
is given by 
\[ \gamma _s=\begin{pmatrix} a&0 \\ 0&a \end{pmatrix} 
\begin{pmatrix} 1&y\beta \\ 0&1 \end{pmatrix} ,\quad 
\gamma _u=\begin{pmatrix} 1&x\sqrt{d} \\ 0&1 \end{pmatrix} . \] 
\end{lemma}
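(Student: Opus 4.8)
The plan is to handle the two parts separately: part (1) is a structural statement about the quaternion algebra $B$, and part (2) combines a direct matrix computation with a semisimplicity argument that is the real content.

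For part (1), I would first note that since $a$ is a primitive $3$rd, $4$th, or $6$th root of unity it does not lie in $\bQ$, so $F=\bQ(a)$ is a genuine imaginary quadratic subfield of $B$ and the canonical involution restricts to the nontrivial Galois automorphism of $F/\bQ$; in particular $\overline{a}\neq a$. The hypothesis $\beta a=\overline{a}\beta$ then forces $\beta\notin F$, since an element of $F$ would commute with $a$ and give $\overline{a}=a$. Next I would check that the sum $F+F\beta$ is direct: a relation $f_1+f_2\beta=0$ with $f_2\neq 0$ would put $\beta=-f_2^{-1}f_1\in F$, a contradiction. Because $\dim_{\bQ}F=\dim_{\bQ}F\beta=2$ (right multiplication by the invertible $\beta$ is a $\bQ$-isomorphism $F\to F\beta$) and $\dim_{\bQ}B=4$, the direct sum $F\oplus F\beta$ exhausts $B$.

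For part (2), I would first record that $\beta$ is a pure quaternion, $\overline{\beta}=-\beta$, a standard consequence of $\beta^{2}\in\bQ^{\times}$ (which itself follows from the relation $\beta a=\overline{a}\beta$). Then for $y\in F$ one computes $\overline{y\beta}=-y\beta$, so $F\beta\subset B^{0}$; together with $F\cap B^{0}=\bQ\sqrt{d}$ this gives $B^{0}=\bQ\sqrt{d}\oplus F\beta$, and hence every $b\in\fA^{0}$ is uniquely $x\sqrt{d}+y\beta$ with $x\in\bQ$, $y\in F$. I would next verify by direct multiplication that $\gamma_s\gamma_u=\gamma(a,b)$ and that $\gamma_s$ and $\gamma_u$ commute, the latter reducing to the commutativity of $a$ and $\sqrt{d}$ inside the field $F$. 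That $\gamma_u=\begin{pmatrix}1&x\sqrt{d}\\0&1\end{pmatrix}$ is unipotent is immediate, its off-diagonal block squaring to zero; and $\gamma_u\in G$ since $x\sqrt{d}\in B^{0}$, whence $\gamma_s=\gamma(a,b)\gamma_u^{-1}\in G$ as well.

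The heart of the argument is showing that $\gamma_s=\begin{pmatrix}a&ay\beta\\0&a\end{pmatrix}$ is semisimple. I would conjugate it by an upper unipotent element $g=\begin{pmatrix}1&w\\0&1\end{pmatrix}$ and solve $aw-wa=ay\beta$ for $w$. The key point is that the adjoint map $w\mapsto aw-wa$ has kernel exactly the centralizer $F$ of $a$ and, by the computation $z\beta\mapsto z(a-\overline{a})\beta$ (using $za=az$ and $\beta a=\overline{a}\beta$), carries $F\beta$ bijectively onto $F\beta$ because $a-\overline{a}\in F^{\times}$. Since the target $ay\beta$ lies in $F\beta$, there is a solution $w\in F\beta$, and $F\beta\subset B^{0}$ places $g$ inside the parabolic $P\subset G$. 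This conjugation transforms $\gamma_s$ into $\mathrm{diag}(a,a)$, which is visibly semisimple, so $\gamma_s$ is semisimple. By the uniqueness of the Jordan decomposition in $G$ this identifies $\gamma_s$ and $\gamma_u$ as the stated components. I expect the solvability of $aw-wa=ay\beta$ within $F\beta$ to be the main obstacle, as it is precisely where the arithmetic of the embedding $F\subset B$ is used.
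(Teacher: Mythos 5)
Your proof is correct. Note that the paper itself gives no argument for this lemma: it states that these lemmas ``were proved in the case of $D_2=1$ by Hashimoto [Has84]'' and that the general case follows ``in the almost same method,'' so the proof is omitted. Your write-up supplies exactly the standard argument one would expect behind that citation: the dimension count $B=F\oplus F\beta$, the identification $B^0=\bQ\sqrt{d}\oplus F\beta$ via $\overline{\beta}=-\beta$, the direct verification that $\gamma_s\gamma_u=\gamma(a,b)$ with the two factors commuting (reducing to $a\sqrt{d}=\sqrt{d}\,a$ in $F$), and the semisimplicity of $\gamma_s$ obtained by solving $aw-wa=ay\beta$ in $F\beta$, where the map $z\beta\mapsto z(a-\overline{a})\beta$ is invertible because $a\neq\overline{a}$. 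The one point worth making explicit is why $\mathrm{diag}(a,a)$ is semisimple: $a$ is a root of unity of order $3$, $4$, or $6$, so its minimal polynomial over $\bQ$ is separable and the subalgebra it generates is the field $F$; with that remark your argument is complete.
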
 

\begin{lemma} \label{lem:Eichler} 
If we put, for a fixed $a$ as above, 
\[ C(a):= \{ x^{-1}a x | x\in B^{\times} \} , \] 
then we have 
\[ \sharp ((C(a)\cap \fO)/\sim _{\fO^{\times}})  
=\prod _{p\mid D} (1-\left( \frac{F}{p}\right) ). \] 
\end{lemma}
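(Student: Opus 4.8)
The plan is to reinterpret $C(a)\cap\fO$ in terms of optimal embeddings of the quadratic order $\mathcal{O}_F$ into $\fO$, and then apply Eichler's theory of optimal embeddings. First I would observe that, since $a$ is a primitive $3$rd, $4$th, or $6$th root of unity, the ring $\bZ[a]$ is already the full ring of integers $\mathcal{O}_F$ of $F=\bQ(a)$ (namely $\bZ[\sqrt{-1}]$ or $\bZ[(1+\sqrt{-3})/2]$). Given $b\in C(a)\cap\fO$, the assignment $a\mapsto b$ defines an embedding $\iota_b\colon F\hookrightarrow B$ with $\iota_b(\mathcal{O}_F)=\bZ[b]\subset\fO$; because $\mathcal{O}_F$ is maximal, $\iota_b(F)\cap\fO=\bZ[b]$ holds automatically, so $\iota_b$ is optimal for $(\mathcal{O}_F,\fO)$. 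Conversely, Skolem--Noether shows that every optimal embedding $\mathcal{O}_F\hookrightarrow\fO$ arises this way, since its value on $a$ is $B^\times$-conjugate to $a$ and hence lies in $C(a)$; and $b\mapsto\iota_b$ is a bijection intertwining the conjugation action of $\fO^\times$ on $C(a)\cap\fO$ with $\fO^\times$-conjugacy of embeddings. Thus $\sharp\bigl((C(a)\cap\fO)/\sim_{\fO^\times}\bigr)$ equals the number of $\fO^\times$-conjugacy classes of optimal embeddings of $\mathcal{O}_F$ into $\fO$.

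Next I would invoke Eichler's embedding formula. Since $B$ is indefinite it satisfies the Eichler condition, and, as noted at the start of this section, the class number of $\fO$ is one, so there is a single type of maximal order and strong approximation collapses the mass formula to a single term: the number of classes above equals $h(F)\prod_p m_p$, where $m_p$ is the number of optimal embeddings of $\mathcal{O}_F\otimes\bZ_p$ into $\fO\otimes\bZ_p$ modulo $(\fO\otimes\bZ_p)^\times$. It then remains to compute the local factors. For $p\nmid D$ we have $\fO\otimes\bZ_p\cong M(2;\bZ_p)$, and every maximal quadratic order embeds optimally and uniquely up to conjugacy, so $m_p=1$. For $p\mid D$, $B\otimes\bQ_p$ is the division quaternion algebra over $\bQ_p$ and $\fO\otimes\bZ_p$ its valuation ring; a standard local computation gives $m_p=0,2,1$ according as $p$ splits, is inert, or ramifies in $F$, that is, $m_p=1-\left(\frac{F}{p}\right)$. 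Finally $h(F)=1$ for both $F=\bQ(\sqrt{-1})$ and $F=\bQ(\sqrt{-3})$, so the product reduces to $\prod_{p\mid D}\bigl(1-\left(\frac{F}{p}\right)\bigr)$, as claimed.

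The step requiring the most care is the passage from the bijection to the arithmetic count: one must justify that, under the Eichler condition together with $h(\fO)=1$, the global embedding number is exactly $h(F)\prod_p m_p$ with no residual summation over the class set, and then carry out the local embedding-number computation at the ramified primes $p\mid D$, where the structure of the unique maximal order of the local division algebra is used to see that $\mathcal{O}_F\otimes\bZ_p$ fails to embed when $p$ splits and admits exactly $2$ (resp.\ $1$) optimal classes when $p$ is inert (resp.\ ramified). All of this is classical; since Hashimoto treated the case $D_2=1$ and the present lemma involves only the maximal order $\fO$ (the pair $(D_1,D_2)$ does not enter at all), the argument is verbatim that of \cite{Has84}, and I would simply reference it.
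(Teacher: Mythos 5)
Your argument is correct and coincides with what the paper intends: the paper omits the proof of this lemma, stating that it is proved by Hashimoto in \cite{Has84} for $D_2=1$ and carries over verbatim (indeed, as you observe, the statement involves only $\fO$ and not the pair $(D_1,D_2)$), and Hashimoto's proof is precisely the classical Eichler optimal-embedding count you reconstruct --- identify $C(a)\cap\fO$ modulo $\fO^\times$ with $\fO^\times$-classes of optimal embeddings of $\mathcal{O}_F$ into $\fO$, use that the indefinite $B$ has class number one to reduce Eichler's trace formula to $h(F)\prod_p m_p$, and compute $m_p=1-\left(\frac{F}{p}\right)$ at $p\mid D$ and $m_p=1$ elsewhere, with $h(F)=1$. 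No gaps.
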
 

\begin{lemma}   
Let $\gamma _i=\gamma (a_i,b_i)$\ $(i=1,2)$ be two elements of $\Gamma^{qu}$ 
of the form of Proposition \ref{prop:normalform}. 
If $\gamma _1$ and $\gamma _2$ are $\Gamma$-conjugate, 
then $a_1$ and $a_2$ are $\fO^{\times}$-conjugate. 
\end{lemma}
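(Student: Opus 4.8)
The plan is to show that any element of $\Gamma$ conjugating $\gamma_1$ to $\gamma_2$ must in fact be upper triangular, and then to read off the $\fO^\times$-conjugacy of $a_1$ and $a_2$ directly from the $(1,1)$-entry of the conjugation relation. Suppose $g\gamma_1 g^{-1}=\gamma_2$ for some $g=\begin{pmatrix} p&q\\r&s \end{pmatrix}\in\Gamma$, where by Proposition \ref{prop:normalform} we may write $\gamma_i=\gamma(a_i,b_i)=\begin{pmatrix} a_i&a_ib_i\\0&a_i \end{pmatrix}$. Since the multiplicative Jordan decomposition is canonical and hence preserved under conjugation, $g$ must carry the unipotent part of $\gamma_1$ to that of $\gamma_2$; that is, $g\gamma_{1,u}g^{-1}=\gamma_{2,u}$, where by the Jordan decomposition computed in the preceding lemma $\gamma_{i,u}=\begin{pmatrix} 1&x_i\sqrt{d_i}\\0&1 \end{pmatrix}$ with $x_i\in\bQ$ and $d_i$ the discriminant of $\bQ(a_i)$.

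First I would exploit that $\gamma_i\in\Gamma^{(qu)}$ is genuinely non-semi-simple, so that $x_i\neq 0$. Writing $\gamma_{i,u}=1+N_i$ with $N_i=\begin{pmatrix} 0&x_i\sqrt{d_i}\\0&0 \end{pmatrix}$, the relation $g\gamma_{1,u}g^{-1}=\gamma_{2,u}$ becomes $gN_1=N_2g$. Comparing the $(1,1)$-entries of the two sides gives $0=x_2\sqrt{d_2}\,r$. Since $x_2\in\bQ^\times$ and $\sqrt{d_2}\in\bQ(a_2)\subset B$ is invertible in $B$, the element $x_2\sqrt{d_2}$ is a unit of $B$, and therefore $r=0$. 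This is the decisive step: it forces $g$ into the parabolic $P$.

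With $r=0$ the matrix $g=\begin{pmatrix} p&q\\0&s \end{pmatrix}$ lies in $P\cap\Gamma$. The unitary condition $g\begin{pmatrix} 0&1\\1&0 \end{pmatrix}{}^t\overline{g}=\begin{pmatrix} 0&1\\1&0 \end{pmatrix}$ then yields $s\overline{p}=1$, so $s=\overline{p}^{-1}$; combined with $p,s\in\fO$ and the fact that the reduced norms of $p$ and $s$ are reciprocal rational integers, this shows that $p$ has reduced norm $\pm1$, i.e. $p\in\fO^\times$. Finally, comparing the $(1,1)$-entries of $g\gamma_1=\gamma_2 g$ gives $pa_1=a_2p$, that is, $a_2=pa_1p^{-1}$ with $p\in\fO^\times$, which is exactly the asserted $\fO^\times$-conjugacy.

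The only delicate point is the vanishing of $r$, and it rests squarely on the non-semi-simplicity of the elements (guaranteeing $x_i\neq0$, so that $x_i\sqrt{d_i}$ is a unit of $B$); I expect this to be the main obstacle to phrase cleanly, whereas the remaining algebra---the unitary constraint forcing $p\in\fO^\times$ and the $(1,1)$-entry computation---is routine.
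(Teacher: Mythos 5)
Your proof is correct, and it is essentially the argument the paper intends: the paper omits the proof of this lemma, deferring to Hashimoto's treatment of the case $D_2=1$ in \cite{Has84}, and your route is exactly that standard one. The key steps all check out — uniqueness of the Jordan decomposition together with $x_2\sqrt{d_2}\neq 0$ being invertible in the division algebra $B$ forces $r=0$, the unitary condition then gives $s=\overline{p}^{-1}$ and hence $p\in\fO^{\times}$ (since $\overline{p},\overline{p}^{-1}\in\fO$), and the $(1,1)$-entry of $g\gamma_1=\gamma_2 g$ yields $a_2=pa_1p^{-1}$.
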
 

\begin{lemma}  \label{lem:La}  
Let $\gamma _i=\gamma (a,b_i)$\ $(i=1,2)$ be two elements of $\Gamma^{(qu)}$ 
of the form of Proposition \ref{prop:normalform}. 
We put 
\[ L_{\fA}(a):=\{ a^{-1}za-z | z\in \fA ^0 \} . \] 
Then $\gamma (a,b_1)$ and $\gamma (a,b_2)$ are $\Gamma$-conjugate 
if and only if $b_1-b_2\in L_{\fA}(a)$. 
\end{lemma}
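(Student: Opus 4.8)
The plan is to prove Lemma \ref{lem:La} by analyzing when two elements $\gamma(a,b_1)$ and $\gamma(a,b_2)$ become $\Gamma$-conjugate, reducing the question to the structure of the stabilizer of $a$ in $\Gamma$ and its action on the upper-unipotent part. First I would set up the conjugation explicitly: suppose $g\in\Gamma$ satisfies $g\,\gamma(a,b_1)\,g^{-1}=\gamma(a,b_2)$. Writing $\gamma(a,b_i)=\mathrm{diag}(a,a)\begin{pmatrix}1&b_i\\0&1\end{pmatrix}$, I would compare semisimple parts. Since the Jordan decomposition is preserved under conjugation, the semisimple factor $\mathrm{diag}(a,a)$ must be fixed up to conjugation; the previous lemma tells us $a_1,a_2$ must be $\fO^\times$-conjugate, but here the first entry is already the same fixed $a$, so the conjugating element $g$ must centralize $\mathrm{diag}(a,a)$ in an appropriate sense. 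The key reduction is therefore: $g$ lies in the centralizer $Z_\Gamma(\mathrm{diag}(a,a))$, and I would compute how such $g$ acts on the unipotent parameter $b$.

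The main computation is to carry out the conjugation $g\,\gamma(a,b_1)\,g^{-1}$ for $g$ ranging over this centralizer and read off the resulting translation on $b$. Concretely, I expect $g$ centralizing the diagonal $a$-part to have the block form $\begin{pmatrix}s&t\\0&\overline{s}^{-1}\end{pmatrix}$ with $s$ commuting with $a$ (so $s\in F^\times$, using part (1) of the first lemma that $B=F\oplus F\beta$), and the conjugation will transform $b_1$ into $s b_1 \overline{s} + (\text{a term of the form } a^{-1}za - z)$. The term $a^{-1}za - z$ is precisely the definition of $L_{\fA}(a)$ when $z$ ranges over $\fA^0$, which is why that sublattice appears. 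So I would show: the difference $b_2 - b_1$ lies in $L_{\fA}(a)$ exactly when a suitable $g\in\Gamma$ effecting the conjugation exists. One direction (if $b_1-b_2\in L_{\fA}(a)$, then conjugate) should follow by exhibiting the explicit unipotent element $\begin{pmatrix}1&z\\0&1\end{pmatrix}\in\Gamma$ (using $z\in\fA^0$ and the integrality conditions $b\in\fA^{-1}$ defining $\Gamma$) whose conjugation realizes the shift by $a^{-1}za-z$.

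The converse direction (if conjugate, then the difference lies in $L_{\fA}(a)$) is where I expect the main obstacle. The difficulty is that an arbitrary $\gamma$-conjugation need not be by a single unipotent element; it may involve a nontrivial $s$-part, and I must verify that any extra contribution from the $s$-part can be absorbed or does not produce shifts outside $L_{\fA}(a)$. Handling this requires controlling the full centralizer of $\gamma_s$ inside $\Gamma$, not just inside $G$, which brings in the maximal-lattice/integrality constraints $a,d\in\fO$, $b\in\fA^{-1}$, $c\in\fA$. I anticipate needing to check that the $F^\times$-scaling $s$ must in fact be a unit (a root of unity times a unit of $\cO_F$) so that $s b_1\overline{s}=b_1$, collapsing the action to a pure translation by $L_{\fA}(a)$. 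This unit-reduction step, relying on the discreteness of $\Gamma$ and the precise shape of $\fA$, is the technical heart; the rest is bookkeeping with the explicit block matrices. Because Hashimoto established the $D_2=1$ case by exactly this route, I would closely follow that argument, checking only that replacing $\fO$ by $\fA$ in the off-diagonal entries does not disturb the computation of $L_{\fA}(a)$.
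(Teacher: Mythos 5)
Your skeleton (easy direction by exhibiting the unipotent conjugator $u(z)=\left(\begin{smallmatrix}1&z\\0&1\end{smallmatrix}\right)$, $z\in\fA^0$; hard direction by pinning down all possible conjugators and computing their action on $b$) is the right one --- it is Hashimoto's route for $D_2=1$, and the paper itself gives no proof, deferring to \cite{Has84} with the remark that the generalization is proved ``in the almost same method''. However, two steps of your hard direction are wrong as stated. The ``key reduction'' that a conjugator $g$ must lie in $Z_\Gamma(\mathrm{diag}(a,a))$ is false, and taken literally it would refute the lemma rather than prove it: the unipotent conjugators $u(z)$ with $z\in\fA^0$, $z\notin F$, do \emph{not} centralize $\mathrm{diag}(a,a)$ (one has $u(z)\,\mathrm{diag}(a,a)\,u(z)^{-1}=\left(\begin{smallmatrix}a&za-az\\0&a\end{smallmatrix}\right)$), yet these are exactly the elements that produce the nontrivial shifts $a^{-1}za-z$ generating $L_{\fA}(a)$. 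Relatedly, by the second lemma of the section the semisimple part of $\gamma(a,b)$ is $\mathrm{diag}(a,a)\left(\begin{smallmatrix}1&y\beta\\0&1\end{smallmatrix}\right)$, not $\mathrm{diag}(a,a)$, so the semisimple parts of $\gamma_1,\gamma_2$ need not even coincide. The correct reduction, which you need for the ``only if'' direction to be exhaustive, is: since $\gamma\in\Gamma^{(qu)}$ the unipotent part $\gamma_u=\left(\begin{smallmatrix}1&x\sqrt{d}\\0&1\end{smallmatrix}\right)$ is nontrivial, so $g$ must carry $\ker(\gamma_{1,u}-1)=(0,B)$ to $\ker(\gamma_{2,u}-1)=(0,B)$ and hence lies in $P\cap\Gamma$; only then does comparison of diagonal entries force the diagonal part $s$ to satisfy $sas^{-1}=a$, i.e.\ $s\in F^\times\cap\fO^\times=\cO_F^\times$.

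The second problem is your ``unit-reduction'' claim that $sb_1\overline{s}=b_1$ once $s$ is a root of unity. This is false: for $s=a$ and $b_1$ with nonzero component in $F^{\perp}=F\beta$ one gets $ab_1\overline{a}=ab_1a^{-1}\neq b_1$ (conjugation by $a$ acts on $y\beta$ by $y\beta\mapsto a^2y\beta$). The lemma survives not because the scaling is trivial but because $sb_1\overline{s}-b_1\in L_{\fA}(a)$ anyway: $s$ is $\pm a^k$, and $ab_1a^{-1}-b_1=-(a^{-1}za-z)$ with $z=ab_1a^{-1}\in\fA^0$ because $\fA$ is a two-sided ideal; iterating handles $a^k$. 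That absorption step must be proved, and it is the actual content replacing your ``collapse to a pure translation''. Finally, a normalization point: the section containing the lemma re-conjugates $\Gamma$ so that upper-right entries lie in $\fA$, whereas you quote the introduction's condition $b\in\fA^{-1}$; with the latter normalization the achievable shifts in the ``only if'' direction would be $a^{-1}za-z$ for $z\in(\fA^{-1})^0$, a lattice a priori strictly larger than $L_{\fA}(a)$, so the statement would come out wrong.
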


\subsection{The contribution $\mathbf{I_1}$}  \label{subsec:I1} 
%
%%%%%%%%%%%%%%%%%%%%%%%%%%%%%%%%%%%%%%%%%%%%%%%%%%%%%%%%%%%%%%%%%%%%%%%%%%%

In this subsection, we evaluate the contribution $I_1$. 
We define the following four subsets of $\Gamma$:  
\begin{align*} 
F_1 &:= \left\{ \left. \begin{pmatrix} 1_2&S \\ 0_2&1_2 \end{pmatrix} 
\right| S\in SM_2(\bR ), \mathrm{det}S\neq 0, S:\mathrm{definite} 
\right\} \cap \Gamma , \\ 
F_2 &:= \left\{ \left. \begin{pmatrix} 1_2&S \\ 0_2&1_2 \end{pmatrix} 
\right| S\in SM_2(\bR ), \mathrm{det}S\neq 0, S:\mathrm{indefinite}, 
-\mathrm{det}S \not\in (\bQ ^{\times })^2  
\right\} \cap \Gamma , \\ 
F_3 &:= \left\{ \left. \begin{pmatrix} 1_2&S \\ 0_2&1_2 \end{pmatrix} 
\right| S\in SM_2(\bR ), \mathrm{det}S\neq 0, S:\mathrm{indefinite}, 
-\mathrm{det}S \in (\bQ ^{\times })^2  
\right\} \cap \Gamma , \\ 
F_4 &:= \left\{ \left. \begin{pmatrix} 1_2&S \\ 0_2&1_2 \end{pmatrix} 
\right| S\in SM_2(\bR ), \mathrm{det}S=0
\right\} \cap \Gamma ,  
\end{align*} 
where we denote by $SM_2(\bR)$ the set of all symmetric matrices of degree 2 
over $\bR$. 
We denote by $C^{(u)}$ the set of all $\Gamma$-conjugacy classes of $\Gamma^{(u)}$. 
We can prove the following proposition by Proposition \ref{prop:normalform}. 
\begin{proposition} \label{Proposition:Unipotent1} 
We can decompose $C^{(u)}$ as  
\begin{align*} 
C^{(u)} = \bigsqcup _{i=1}^4 \Big( \bigsqcup _{\gamma \in F_i/\sim _{\Gamma }} 
\Big\{ \ \{ \gamma \} _{\Gamma } \Big\} \Big) , 
\end{align*} 
where $F_i/\sim _{\Gamma }$ denotes a complete system of representatives of 
$\Gamma$-conjugacy calsses of $F_i$ and 
$\{ \gamma \} _{\Gamma}$ denotes the $\Gamma$-conjugacy class represented by $\gamma$. 
\end{proposition}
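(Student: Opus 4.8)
The plan is to prove Proposition~\ref{Proposition:Unipotent1} by first establishing that the four subsets $F_1,\dots,F_4$ exhaust $\Gamma^{(u)}$ and then verifying that the indicated indexing by $\Gamma$-conjugacy classes of each $F_i$ is a genuine partition of $C^{(u)}$. By Proposition~\ref{prop:normalform}, every element of $\Gamma^{(u)}$ is $\Gamma$-conjugate to one of the form $\gamma(a,b)$ with $a=\pm 1_2$ and $b\in\fA^0-\{0\}$, since the semi-simple factor of a unipotent element is $\pm 1_4$. First I would translate such an element into the symplectic picture via $\phi$: when $a=\pm 1$, the matrix $\gamma(a,b)=\pm\begin{pmatrix} 1_2&b\\ 0&1_2\end{pmatrix}$ corresponds under $\phi$ to $\pm\begin{pmatrix} 1_2&S\\ 0_2&1_2\end{pmatrix}$ for a symmetric real matrix $S$ built from $b\in B^0$ (the condition $\bar b=-b$ forces symmetry of the image block), and $b\neq 0$ is equivalent to $\det$ of the semisimple part being trivial while the unipotent block is nonzero. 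This shows every class in $C^{(u)}$ contains a representative lying in $\bigcup_{i=1}^4 F_i$, so the union on the right covers $C^{(u)}$.

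Next I would verify disjointness, namely that no $\Gamma$-conjugacy class meets two distinct $F_i$. The defining conditions on $S$ — definite, indefinite with $-\det S\notin(\bQ^\times)^2$, indefinite with $-\det S\in(\bQ^\times)^2$, and $\det S=0$ — are mutually exclusive, and each is invariant under the $\Gamma$-action. The key point here is that $\Gamma$ acts on the upper unipotent block by $S\mapsto \overline{a}\,S\,{}^t\!\overline{a}$-type congruence transformations (more precisely, conjugation by $\begin{pmatrix} a&0\\ 0&\bar a^{-1}\end{pmatrix}$ sends $S$ to a $GL$-congruent symmetric matrix, and the unipotent lower-triangular part of $\Gamma$ acts trivially on the class of $S$ modulo the relevant equivalence). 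Since $GL_2$-congruence over $\bR$ preserves the sign of $\det S$ and whether $S$ is definite, and congruence over $\bQ$ preserves the square class of $-\det S$, membership in a given $F_i$ is a $\Gamma$-conjugacy invariant. Therefore the four families do not overlap at the level of classes.

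Finally, within each $F_i$ the indexing set $F_i/\!\sim_\Gamma$ is by definition a complete system of representatives of the $\Gamma$-conjugacy classes meeting $F_i$, so the inner disjoint union $\bigsqcup_{\gamma\in F_i/\sim_\Gamma}\{\{\gamma\}_\Gamma\}$ lists each such class exactly once. Combining coverage (from Proposition~\ref{prop:normalform}) with disjointness (from the invariance of the discriminant-type conditions) yields the claimed decomposition of $C^{(u)}$ as a disjoint union.

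I expect the main obstacle to be the disjointness step, specifically checking carefully that the $\Gamma$-action on the unipotent block really does respect the arithmetic invariant $-\det S\bmod(\bQ^\times)^2$ and not merely the real invariants; this requires unwinding how conjugation by elements of $\Gamma$ (as opposed to all of $G$ or $Sp(2;\bR)$) transforms $S$, and confirming that the rational square class is preserved because the conjugating elements have entries in $\fO$ and $\fA^{\pm1}$. The coverage and the final bookkeeping are routine once Proposition~\ref{prop:normalform} is in hand.
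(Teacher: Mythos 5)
Your proposal is correct and follows essentially the same route as the paper: the paper's entire proof is your coverage step, namely applying Proposition~\ref{prop:normalform} to put a representative of each class in $C^{(u)}$ into the form $\pm\begin{pmatrix}1&b\\0&1\end{pmatrix}$ and observing that its image under $\phi$ lies in some $F_i$. The disjointness verification you add (that membership in each $F_i$ is a $\Gamma$-conjugacy invariant, because any element of $\Gamma$ conjugating one upper unipotent block to another must preserve the relevant Lagrangian, hence lies in the parabolic and acts on $S$ by rational congruence, preserving definiteness and the square class of $-\det S$) is left implicit in the paper but is sound.
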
 

\begin{proof} 
Take an arbitrary $\{ \gamma '\} _{\Gamma } \in C_u$. 
By Proposition \ref{prop:normalform}, we have some $x\in\Gamma$ 
such that 
$x^{-1}\gamma 'x=\pm \begin{pmatrix} 1&b\\0&1 \end{pmatrix}$, $b\in\fA^0-\{ 0\}$. 
Identifying $x^{-1}\gamma 'x$ and its image by $\phi$ in $Sp_2(\bR)$, 
it is contained in some $F_i$, 
so we have 
\[ \{ \gamma '\} _{\Gamma } =\{ x^{-1}\gamma 'x\} _{\Gamma} 
\in \bigsqcup _{\gamma \in F_i/\sim _{\Gamma}} \Big\{ \ \{ \gamma \} _{\Gamma} \Big\} . \] 
\end{proof}  

However, especially in the case of our $\Gamma$, we have $F_3=F_4=\emptyset$ and 
\[ I_1= c_{k,j} \cdot 
\sum _{i=1}^2 \mathrm{vol}(C_0(\gamma_i ;\Gamma )\backslash C_0(\gamma_i ;Sp(2;\bR )))%$ \\ 
%\hspace{65mm} 
\cdot \lim _{s\rightarrow +0} \sum _{\gamma '\in F_i/\sim} 
\frac{J_0(\gamma ';s)}{[C(\gamma ';\Gamma ):\pm C_0(\gamma ';\Gamma )]}, \] 
where $\gamma_i$ is an any element of $F_i$. (cf. \cite[Theorem 3.1]{Wak}). 
By using the formula of \cite[(e-2),(e-3)]{Wak}, we have 
\begin{align*} 
\lim _{s\rightarrow +0} \sum _{\gamma '\in F_i/\sim} 
\frac{J_0(\gamma ';s)}{[C(\gamma ';\Gamma ):\pm C_0(\gamma ';\Gamma )]}
=\left\{ \begin{array}{ll} 
c_{k,j}^{-1} \cdot \frac{j+1}{2^2\pi } \cdot 
          \frac{1}{[\tilde{\Gamma}:\tilde{\Gamma}_+]} \cdot 
          \frac{\mathrm{vol}(\tilde{\Gamma}_+\backslash \fH _1)}{\mathrm{vol}(L)}
& \cdots i=1 \\ 
0&\cdots i=2 
\end{array} \right. 
\end{align*} 
Here, we define the notations as follows. 

We define a lattice $L$ in $SM_2(\bR)$ by 
\[ \left\{ \left. \begin{pmatrix} 1_2&X\\0_2&1_2 \end{pmatrix} \right| X\in L\right\} 
=\left\{ \left. \begin{pmatrix} 1_2&S\\0_2&1_2 \end{pmatrix} \right| S\in SM_2(\bR ) \right\} \cap \Gamma . \] 
We put 
\begin{align*} 
C_0(\gamma _1;Sp(2;\bR )) &= \left\{ \left. \begin{pmatrix} 1_2&S\\0_2&1_2 \end{pmatrix} \right| S\in SM_2(\bR ) \right\} , & 
C_0(\gamma_1;\Gamma ) &= C_0(\gamma _1;Sp(2;\bR )) \cap \Gamma \\ 
 & & &=\left\{ \left. \begin{pmatrix} 1_2&X\\0_2&1_2 \end{pmatrix} \right| X\in L\right\} 
\end{align*} 
and 
\[ \mathrm{vol}(L) :=\mathrm{vol}(C_0(\gamma_1;\Gamma )\backslash C_0(\gamma_1;G(\bR ))) = \int _{L\backslash SM_2(\bR )} dx_{11}dx_{12}dx_{22} \] 
for $\begin{pmatrix} x_{11}&x_{12}\\x_{12}&x_{22} \end{pmatrix} \in SM_2(\bR)$.  
We put 
\begin{align*} 
\tilde{\Gamma} &= \left\{ \left. \begin{pmatrix} x&0\\0&\overline{x}^{-1} \end{pmatrix} \right| x\in B^{\times} \right\} \cap \Gamma , & 
\tilde{\Gamma}_+ &= \left\{ \left. \begin{pmatrix} x&0\\0&\overline{x}^{-1} \end{pmatrix} \in \tilde{\Gamma} \right| x\overline{x}>0 \right\} . 
\end{align*} 
We can identify $\tilde{\Gamma}_+$ as the subgroup of $GL_+(2;\bR )=\{ g\in GL(2;\bR ) | \mathrm{det}(g) >0\}$ and we define 
\[ \mathrm{vol}(\tilde{\Gamma}_+\backslash \fH_1) =\int _{\tilde{\Gamma}_+\backslash \fH_1} y^{-2}dxdy \] 
for $x+iy \in \fH_1$, where $\fH_1$ is the upper half plane $\{ z\in \bC | \mathrm{Im}(z)>0 \}$. 

It follows that we have 
\[ I_1= \frac{j+1}{2^3\pi} \cdot \frac{\mathrm{vol}(\tilde{\Gamma}_+\backslash \fH_1)}{[\tilde{\Gamma}:\tilde{\Gamma}_+]} . \] 
Noting that $\tilde{\Gamma}$ and $\tilde{\Gamma}_+$ are independent on a choice 
of pairs ($D_1$, $D_2$) for a fixed $D$, 
we see that the value $I_1$ is also independent on it. 
Hence we have 
\[ I_1=2^{-3}3^{-1}(j+1)\prod _{p\mid D} (p-1), \] 
which is the same value as in \cite[Theorem 6.1]{Wak}.

%%%%%%%%%%%%%%%%%%%%%%%%%%%%%%%%%%%%%%%%%%%%%%%%%%%%%%%%%%%%%%%%%
%
\subsection{The contribution $\mathbf{I_2}$} 
\label{subsec:I2} 
%
%%%%%%%%%%%%%%%%%%%%%%%%%%%%%%%%%%%%%%%%%%%%%%%%%%%%%%%%%%%%%%%%%

In this section, we evaluate the contribution $I_2$. 
Let $\gamma$ be an element of $\Gamma^{(qu)}$ whose principal polynomial is 
$f_6(x)=(x^2+1)^2$. 
Then $\gamma$ is $Sp(2;\bR)$-conjugate to an element of the form 
\begin{align*} 
\gamma (\pi /2,s)= 
\begin{pmatrix} 0 & 1 & 0 & 0 \\ 
-1 & 0 & 0 & 0 \\ 
0 & 0 & 0 & 1 \\ 
0 & 0 & -1 & 0 \end{pmatrix} 
\begin{pmatrix} 1&0&s&0 \\ 0&1&0&s \\ 0&0&1&0 \\ 0&0&0&1 \end{pmatrix} 
\end{align*} 
(cf. Proposition \ref{prop:normalform}) and corresponds to (f-3) of \cite{Wak}. 

We denote by  $C_6$ the set of all $\Gamma$-conjugacy classes of $\Gamma^{(qu)}$ 
whose principal polynomial is $f_6(x)$. 
Then we have the following  proposition: 
\begin{proposition} \label{Prop:ClassifyQU} 
We can decompose $C_6$ into disjoint union of $4N$ subsets as 
\begin{align*} 
C_6=\bigsqcup_{i=1}^N \bigsqcup_{j=1}^4 \Big( \bigsqcup_{\gamma\in F_{i,j}} 
\Big\{ \{ \gamma \} _{\Gamma} \Big\}\Big),  
\end{align*} 
where $N:=\prod\limits _{p\mid D} (1-\left( \frac{-1}{p}\right))$ 
and $F_{i,j}$ is defined as follows. 
\end{proposition}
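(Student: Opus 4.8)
The goal is to classify the $\Gamma$-conjugacy classes of $\Gamma^{(qu)}$ whose principal polynomial is $f_6(x)=(x^2+1)^2$, and the statement of Proposition \ref{Prop:ClassifyQU} asserts this set $C_6$ decomposes into $4N$ pieces, indexed by a triple: an index $i$ ranging over $N=\prod_{p\mid D}(1-(\tfrac{-1}{p}))$ choices, a second index $j\in\{1,2,3,4\}$, and the internal decomposition into individual $\Gamma$-conjugacy classes within each $F_{i,j}$. The plan is to reduce everything to the normal form provided by Proposition \ref{prop:normalform} and then peel apart the conjugacy relation using the lemmas preceding the statement.

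\textbf{Step 1: reduce to normal form.} First I would invoke Proposition \ref{prop:normalform}: every element of $\Gamma^{(qu)}$ with principal polynomial $f_6(x)$ is $\Gamma$-conjugate to some $\gamma(a,b)$ with $a$ a primitive $4$-th root of unity in $\fO^{\times}$ and $b\in\fA^0-\{0\}$. This converts the classification of conjugacy classes into a classification of admissible pairs $(a,b)$ up to the conjugacy relation, exactly as in Proposition \ref{Proposition:Unipotent1}.

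\textbf{Step 2: classify the semisimple parameter $a$.} The second step is to count the $\fO^{\times}$-conjugacy classes of admissible $a$. Here the relevant tool is Lemma \ref{lem:Eichler}: with $F=\bQ(a)\simeq\bQ(\sqrt{-1})$, the number of $\fO^{\times}$-conjugacy classes of roots $a$ sitting inside $\fO$ whose minimal polynomial is $x^2+1$ is precisely $\prod_{p\mid D}(1-(\tfrac{F}{p}))=\prod_{p\mid D}(1-(\tfrac{-1}{p}))=N$. This produces the outer index $i$ running from $1$ to $N$, and the companion lemma stating that $\Gamma$-conjugate elements have $\fO^{\times}$-conjugate semisimple parts guarantees that distinct $a$'s give genuinely disjoint families. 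So for each of the $N$ choices of $a$ one gets a separate block.

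\textbf{Step 3: split off the geometric type and then the unipotent parameter.} With $a$ fixed, I would next write $B=F\oplus F\beta$ as in the first quoted lemma, express $b=x\sqrt d+y\beta$, and read off the Jordan decomposition $\gamma(a,b)=\gamma_s\gamma_u$; the class is governed by $b\in\fA^0$ modulo the sublattice $L_{\fA}(a)=\{a^{-1}za-z\mid z\in\fA^0\}$ by Lemma \ref{lem:La}. The factor of $4$ (the index $j$) should come from a geometric/rationality dichotomy on the associated symmetric matrix $S$ entirely parallel to the four subsets $F_1,\dots,F_4$ of Proposition \ref{Proposition:Unipotent1} (definite, indefinite with $-\det S$ a non-square, indefinite with $-\det S$ a square, and degenerate). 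The innermost disjoint union over $\gamma\in F_{i,j}$ is then just the statement that each such $F_{i,j}$ is already a complete set of representatives of its own $\Gamma$-conjugacy classes, as cut out by the coset condition of Lemma \ref{lem:La}. The main obstacle I anticipate is Step 3: pinning down exactly how the four types $j=1,2,3,4$ arise and verifying that no further identifications occur between the families --- i.e. that the $a$-stabilizer in $\Gamma$ does not merge distinct $b$-classes beyond what $L_{\fA}(a)$ records, and that the four geometric types remain separated under the full conjugacy action. This is a bookkeeping argument modeled on Hashimoto's $D_2=1$ computation in \cite{Has84}, which the text explicitly says generalizes by essentially the same method, so I expect the definition of $F_{i,j}$ to be given precisely as the analogue of $F_1,\dots,F_4$ intersected with the $a$-fixed data, and the proof to be a direct transcription of Proposition \ref{Proposition:Unipotent1}'s argument with $\gamma(a,b)$ in place of the purely unipotent normal form.
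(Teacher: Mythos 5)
Your Steps 1 and 2 match the paper exactly: reduce to the normal form $\gamma(a,b)$ of Proposition \ref{prop:normalform}, use Lemma \ref{lem:Eichler} to get the $N$ choices of $a$ up to $\fO^{\times}$-conjugacy (with the companion lemma ensuring the resulting blocks are disjoint), and then use Lemma \ref{lem:La} to reduce the classification within a block to the quotient $\fA^0/L_{\fA}(a)$. The innermost union being a union over genuinely distinct classes also follows from Lemma \ref{lem:La} as you say.

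The genuine gap is in Step 3: your proposed source of the factor $4$ is wrong. You guess that the index $j$ records a geometric trichotomy of the symmetric matrix $S$ (definite / indefinite non-square / indefinite square / degenerate) parallel to $F_1,\dots,F_4$ of Proposition \ref{Proposition:Unipotent1}. But for a quasi-unipotent element the Jordan decomposition computed in the lemma preceding the statement has unipotent factor $\bigl(\begin{smallmatrix}1 & x\sqrt{d}\\ 0&1\end{smallmatrix}\bigr)$, i.e.\ a \emph{scalar} translation; every element of every family $F_{i,j}$ is $Sp(2;\bR)$-conjugate to $\gamma(\pi/2,l)$ or $\gamma(\pi/2,l+\tfrac12)$, so there is no definite/indefinite/degenerate dichotomy available, and indeed all four families within a fixed $i$ have the same geometric type. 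What actually produces the $4$ is an arithmetic computation of the quotient $\fA^0/L_{\fA}(a)$: writing $B=F\oplus F\beta$, one has $a^{-1}za-z=-2w\beta$ for the $F^{\perp}$-component $w\beta$ of $z$, so $L_{\fA}(a)$ is twice the projection of $\fA^0$ onto $F^{\perp}$, a sublattice of $\fA^0\cap F^{\perp}=\cO_F\beta'$ of index $4$ in the relevant sense. The paper chooses a basis $\rho_1=xa+y\beta'$, $\rho_2=\beta'$, $\rho_3=a\beta'$ of $\fA^0$, splits into cases according to whether $y\in\cO_F$ (and if not, which of three index-two refinements occurs), and verifies in each case that $\fA^0/L_{\fA}(a)$ is a disjoint union of exactly four arithmetic progressions in the $xa$-direction; these four progressions are the $F_{i,1},\dots,F_{i,4}$, and the case distinction also determines whether the $Sp(2;\bR)$-parameter is $l$ or $l+\tfrac12$, which is needed downstream for the $\cot^{*}$ evaluation. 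This lattice-index computation is the substantive content of the proposition and cannot be replaced by the transcription of Proposition \ref{Proposition:Unipotent1} you propose.
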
 
Let $a_1,\ldots a_N$ be a complete system of $\fO ^{\times}$-conjugacy classes 
of elemetns of $\fO$ of order $4$. 
(cf. Lemma \ref{lem:Eichler}). 
There exist some $x_i\in\bQ_{>0}$ and $\beta_i\in \fO^{0}$ depending on each $a_i$ 
such that  $F_{i,j}$ 's are given as one of the following four cases.  
Here we put 
\begin{align*} 
\delta (a_i,\gamma _1,\gamma _2) :=\begin{pmatrix} a_i&0\\0&a_i \end{pmatrix} 
\begin{pmatrix} 1&\gamma _1\beta _i\\0&1 \end{pmatrix} \cdot 
\begin{pmatrix} 1&\gamma _2x_ia_i\\0&1 \end{pmatrix}, 
\end{align*} 
where the symbol ^^ ^^ $\cdot$ " means the Jordan decomposition. 

\vspace{3mm} \noindent 
Case 1: 
\begin{align*} 
F_{i,1} &= \big\{ \delta (a_i,0,l)\ |\ l\in \bZ -\{ 0\} \big\} , & 
F_{i,2} &= \big\{ \delta (a_i,1,l)\ |\ l\in \bZ -\{ 0\} \big\} \\ 
F_{i,3} &= \big\{ \delta (a_i,a_i,l)\ |\ l\in \bZ -\{ 0\} \big\} , & 
F_{i,4} &= \big\{ \delta (a_i,1+a_i,l)\ |\ l\in \bZ -\{ 0\} \big\} 
\end{align*} 
All elements of $F_{i,j}$ are conjugate to $\gamma (\frac{\pi}{2},l)$ 
in $Sp (2,\bR)$. 

\vspace{3mm} \noindent 
Case 2: 
\begin{align*} 
F_{i,1} &= \big\{ \delta (a_i,0,2l)\ |\ l\in \bZ -\{ 0\} \big\} , & 
F_{i,2} &= \big\{ \delta (a_i,1,2l)\ |\ l\in \bZ -\{ 0\} \big\} \\ 
F_{i,3} &= \big\{ \delta (a_i,\tfrac{1}{2}a_i,2l+1)\ |\ l\in \bZ \big\} ,& 
F_{i,4} &= \big\{ \delta (a_i,1+\tfrac{1}{2}a_i,2l+1)\ |\ l\in \bZ \big\} 
\end{align*} 
All elements of $F_{i,1}$ and $F_{i,2}$ are conjugate to $\gamma (\frac{\pi}{2},l)$ 
in $Sp(2,\bR)$. 
All elements of $F_{i,3}$ and $F_{i,4}$ are conjugate to 
$\gamma (\frac{\pi}{2},l+\frac{1}{2})$ in $Sp(2,\bR)$. 

\vspace{3mm} \noindent 
Case 3: 
\begin{align*} 
F_{i,1} &= \big\{ \delta (a_i,0,2l)\ |\ l\in \bZ -\{ 0\} \big\} , & 
F_{i,2} &= \big\{ \delta (a_i,a_i,2l)\ |\ l\in \bZ -\{ 0\} \big\} \\ 
F_{i,3} &= \big\{ \delta (a_i,\tfrac{1}{2},2l+1)\ |\ l\in \bZ \big\} ,& 
F_{i,4} &= \big\{ \delta (a_i,\tfrac{1}{2}+a_i,2l+1)\ |\ l\in \bZ \big\} 
\end{align*} 
All elements of $F_{i,1}$ and $F_{i,2}$ are conjugate to $\gamma (\frac{\pi}{2},l)$ 
in $Sp(2,\bR)$. 
All elements of $F_{i,3}$ and $F_{i,4}$ are conjugate to 
$\gamma (\frac{\pi}{2},l+\frac{1}{2})$ in $Sp(2,\bR)$. 

\vspace{3mm} \noindent 
Case 4: 
\begin{align*} 
F_{i,1} &= \big\{ \delta (a_i,0,2l)\ |\ l\in \bZ -\{ 0\} \big\} , & 
F_{i,2} &= \big\{ \delta (a_i,1,2l)\ |\ l\in \bZ -\{ 0\} \big\} \\ 
F_{i,3} &= \big\{ \delta (a_i,\tfrac{1}{2}+\tfrac{1}{2}a_i,2l+1)\ |\ l\in \bZ \big\} , & 
F_{i,4} &= \big\{ \delta (a_i,\tfrac{1}{2}+\tfrac{3}{2}a_i,2l+1)\ |\ l\in \bZ \big\} 
\end{align*} 
All elements of $F_{i,1}$ and $F_{i,2}$ are conjugate to $\gamma (\frac{\pi}{2},l)$ 
in $Sp(2,\bR)$. 
All elements of $F_{i,3}$ and $F_{i,4}$ are conjugate to 
$\gamma (\frac{\pi}{2},l+\frac{1}{2})$ in $Sp(2,\bR)$.

\begin{proof} 
We take an arbitray $\{\gamma\}_{\Gamma} \in C_6$. 
By Proposition \ref{prop:normalform}, we have 
\begin{align*} 
\gamma \sim _{\Gamma} \begin{pmatrix} a&0\\0&a \end{pmatrix} 
\begin{pmatrix} 1&b\\0&1 \end{pmatrix} 
\end{align*} 
for some $a\in \fO$ of order 4 and $b\in \fA ^{0}-\{ 0\}$.  
By taking $\Gamma$-conjugation, we may have $a=a_i$ 
for some $i\in \{ 1,\ldots ,N\}$. 
Hence we have 
\begin{align*} 
C_6=\bigsqcup _{i=1}^{N}X_i, \quad 
X_i=\left\{ \left. \left\{ \begin{pmatrix} a_i&0\\0&a_i \end{pmatrix} 
\begin{pmatrix} 1&b\\0&1 \end{pmatrix} \right\} _{\Gamma}  \right| 
b\in \fA ^{0} -\{ 0\} \right\} . 
\end{align*} 
For each $X_i$, we simply put $a=a_i$. 
By Lemma \ref{lem:La}, each $X_i$ can be decomposed as 
\begin{align*} 
X_i=\bigsqcup _{b\in \fA ^{0} /L_{\fA}(a)} 
\left\{ \left\{ \begin{pmatrix} a&0\\0&a \end{pmatrix} 
\begin{pmatrix} 1&b\\0&1 \end{pmatrix} \right\} _{\Gamma} \right\} . 
\end{align*} 
We can describe the structure of $\fA ^{0}/L_{\fA}(a)$ by the same way as 
Hashimoto \cite{Has84} as follows. 
From Proposition 2.5 of \cite{Has84}, we have 
\begin{align*} 
\fO ^0 =\left\{ \begin{array}{l} \bZ \cdot \frac{a+\beta}{2}+\cO_F\beta \cdots \mbox{ if }2\nmid  D, 
\\ 
\bZ \cdot a+\cO_F\beta \cdots \mbox{ if }2\nmid  D \end{array} \right. 
\end{align*} 
for some $\beta$. 
So we have $\fO ^0\cap F^{\perp}=\cO_F\beta$ and $\cA ^0 \cap F^{\perp}$ is a $\cO_F$-
submodule of $\fO ^0 \cap F^{\perp}$. 
Since $\cO_F$ is P.I.D. and $\fO ^0\cap F^{\perp}$ is a free $\cO_F$-module 
of rank $1$, $\fA ^0 \cap F^{\perp}$ is also a free $\cO_F$-module of renk $1$. 
So we can write $\fA ^0 \cap F^{\perp}=\cO_F \beta '$ with some $\beta'$. 
Since $\fA^0/(\fA^0\cap F^{\perp})$ is a torsion-free $\bZ$-module, 
$\fA^0\cap F^{\perp}$ is a direct summand of $\fA^0$, that is, 
there exists some sub$\bZ$-module $M$ of $\fA^0$ and we can write 
$\fA^0=M\oplus (\fA^0\cap F^{\perp})$. 
The $\bZ$-module $M$ is free of rank $1$. 
A basis of $M$ can be expressed as the form: 
$xa+y\beta '$\quad ($x\in \bQ -\{0\}$, $y\in F$) 
because we have $B^0=\bQ a+ F\beta '$ with $\beta '$ mentioned above. 
So we can take 
$\rho _1:=xa+y\beta '$, $\rho _2:=\beta '$ and $\rho _3:=a\beta '$ 
as a basis of $\fA^0$. 
From the relation 
$-2y\beta'=a^{-1}\rho_1a-\rho_1\in L_{\fA}(a) \subset \fA^0\cap F^{\perp} 
=\cO_F \beta '$, 
we have $2y\in \cO_F=\bZ +\bZ a$. 
We divide the situation into two cases according as $y\in \cO_F$ of $\not\in \cO_F$. 
\\ 
(i)\ The case of $y\in \cO_F$. 
We can write $\rho _1=xa+y_1\beta'+y_2a\beta'$ with some $y_1,y_2\in \bZ$. 
So by replacing $\rho_1$, $\rho_1=xa$, $\rho_2=\beta'$, $\rho_3=a\beta'$ 
forms a besis of $\fA^0$, that is 
\[ \fA^0=\bZ xa\oplus \bZ \beta' \oplus \bZ a\beta '. \]  
(ii)\ The case of $y\not\in \cO_F$. 
We have $y=y_1+y_2a$,\quad $2y_1,2y_2\in \bZ$ and 
$\fA^0=\bZ\cdot (xa+y_1\beta'+y_1a\beta')\oplus \bZ\beta'\oplus \bZ a\beta'$. 
So $\fA^0$ is one of the following three cases: 
\begin{align*} 
\mbox{Case (ii a)\qquad } \fA^0 &= \bZ\cdot (xa+\frac{1}{2}a\beta ') \oplus \bZ \beta' \oplus \bZ a\beta' \\ 
\mbox{Case (ii b)\qquad } \fA^0 &= \bZ\cdot (xa+\frac{1}{2}\beta ') \oplus \bZ \beta' \oplus \bZ a\beta' \\ 
\mbox{Case (ii c)\qquad } \fA^0 &= \bZ\cdot (xa+\frac{1}{2}\beta'+\frac{1}{2}a\beta ') \oplus \bZ \beta' \oplus \bZ a\beta'. 
\end{align*} 
In each case, the structure of $L_{\fA}(a)$ and $\fA^0 /L_{\fA}(a)$ are given as follows: 

\vspace{2mm} \noindent 
Case (i): $L_{\fA}(a)=\{ 2m\beta'+2na\beta' | m,n \in \bZ \}$, 
$\fA^0/L_{\fA}(a)=\{ lxa | l\in \bZ\} \sqcup \{ lxa+\beta' |l\in \bZ \} 
\sqcup \{ lxa+a\beta' |l\in \bZ \} \sqcup \{ lxa+\beta'+a\beta' |l\in \bZ \}$. 

\vspace{2mm} \noindent 
Case (ii a):  $L_{\fA}(a)=\{ 2m\beta'+na\beta' | m,n \in \bZ \}$, 
$\fA^0/L_{\fA}(a)=\{ lxa | l\in 2\bZ\} \sqcup \{ lxa+\beta' |l\in 2\bZ \} 
\sqcup \{ lxa+\frac{1}{2}a\beta' |l\in 2\bZ +1\} 
\sqcup \{ lxa+\beta'+\frac{1}{2}a\beta' |l\in 2\bZ +1\}$. 

\vspace{2mm} \noindent 
Case (ii b):  $L_{\fA}(a)=\{ m\beta'+2na\beta' | m,n \in \bZ \}$, 
$\fA^0/L_{\fA}(a)=\{ lxa | l\in 2\bZ\} \sqcup \{ lxa+a\beta' |l\in 2\bZ \} 
\sqcup \{ lxa+\frac{1}{2}\beta' |l\in 2\bZ +1\} 
\sqcup \{ lxa+\frac{1}{2}\beta'+a\beta' |l\in 2\bZ +1\}$. 

\vspace{2mm} \noindent 
Case (ii c):  $L_{\fA}(a)=\{ m\beta'+na\beta' | m,n \in \bZ \}$, 
$\fA^0/L_{\fA}(a)=\{ lxa | l\in 2\bZ\} \sqcup \{ lxa+\beta' |l\in 2\bZ \} 
\sqcup \{ lxa+\frac{1}{2}\beta'+\frac{1}{2}a\beta' |l\in 2\bZ +1\} 
\sqcup \{ lxa+\frac{1}{2}\beta'+\frac{3}{2}a\beta' |l\in 2\bZ +1\}$. 

\vspace{2mm} \noindent 
Thus we have completed the proof of Proposition \ref{Prop:ClassifyQU}. 
\end{proof} 

The sets $F_{i,l}$'s are called \textit{families} in \cite{Has83}, \cite{Has84}, 
\cite{Wak}, etc. 
For each $F_{i,l}$, there exist $g_{i,l}\in Sp(2;\bR)$ and 
$\lambda\in\bR$ with $0\leq\lambda_{i,l}< 1$, such that 
\[ F_{i,l}=g_{i,l}\left. \left\{ \begin{pmatrix} 0&1&0&0 \\ -1&0&0&0 \\ 0&0&0&1 \\ 0&0&-1&0 \end{pmatrix} \begin{pmatrix} 1&0&n+\lambda_{i,l}&0 \\ 0&1&0&n+\lambda_{i,l} \\ 0&0&1&0 \\ 0&0&0&1 \end{pmatrix} \right| \begin{array}{cc} n\in \bZ \\ n+\lambda_{i,l} \neq 0 \end{array} \right\} g_{i,l}^{-1}. \] 
We define \\ 
\resizebox{0.95\textwidth}{!}{$C(F_{i,l};Sp(2;\bR )) := g_{i,l}\left. \left\{ \begin{pmatrix} \cos\theta & \sin\theta &0&0 \\ -\sin\theta & \cos\theta &0&0 \\ 0&0&\cos\theta & \sin\theta \\ 0&0&-\sin\theta & \cos\theta \end{pmatrix}  \begin{pmatrix} 1&0&t&0 \\ 0&1&0&t \\ 0&0&1&0 \\ 0&0&0&1 \end{pmatrix} \right| \theta , t \in \bR \right\} g_{i,l}^{-1}$}, \\ 
$C_0(F_{i,l};Sp(2;\bR )) := g_{i,l}\left. \left\{ \begin{pmatrix} 1&0&t&0 \\ 0&1&0&t \\ 0&0&1&0 \\ 0&0&0&1 \end{pmatrix} \right| t \in \bR \right\} g_{i,l}^{-1}$, \\ 
$C(F_{i,l};\Gamma ) := C(F_{i,l};Sp(2;\bR ))\cup \Gamma$, \\ 
$C_0(F_{i,l};\Gamma ) := C_0(F_{i,l};Sp(2;\bR ))\cup \Gamma$. \\ 
Then, from (f-3) in \cite{Wak}, we have 
\begin{align*} 
I_2 &= \sum _{i=1}^{N} \sum _{l=1}^{4} \frac{1}{2}\cdot 
\frac{\mathrm{vol}(C_0(F_{i,l};\Gamma )\backslash C_0(F_{i,l};Sp(2,\bR )))}{[C(F_{i,l};\Gamma ):\pm C_0(F_{i,l};\Gamma )]} \\ 
 &\hspace{150pt} \cdot (-2^{-3}(-1)^{j/2})\cdot (1-\sqrt{-1}\cot ^{*}\pi\lambda_{i,l}), 
\end{align*} 
where we put 
\[ \cot ^{*}\pi\lambda := \left\{ \begin{array}{ccl} 0 & \cdots &\text{ if }\lambda =0, \\ \cot\pi\lambda & \cdots & \text{ if }0\le \lambda < 1. \end{array} \right. \] 
We can verify that 
\begin{align*} 
C(F_{i,l};\Gamma ) &= g_{i,l}\left. \left\{ \pm \begin{pmatrix} 1&0&t&0 \\ 0&1&0&t \\ 0&0&1&0 \\ 0&0&0&1 \end{pmatrix}, \pm \begin{pmatrix} 0&1&0&0 \\ -1&0&0&0 \\ 0&0&0&1 \\ 0&0&-1&0 \end{pmatrix} \begin{pmatrix} 1&0&t&0 \\ 0&1&0&t \\ 0&0&1&0 \\ 0&0&0&1 \end{pmatrix} \right| t\in\bZ \right\} g_{i,l}^{-1}, \\ 
C_0(F_{i,l};\Gamma ) &= g_{i,l}\left. \left\{ \begin{pmatrix} 1&0&t&0 \\ 0&1&0&t \\ 0&0&1&0 \\ 0&0&0&1 \end{pmatrix} \right| t\in\bZ \right\} g_{i,l}^{-1} 
\end{align*} 
and 
\[ \mathrm{vol}\big( C_0(F_{i,l};\Gamma )\backslash C_0(F_{i,l};Sp(2;\bR ))\big) =1, \] 
\[ [C(F_{i,l};\Gamma ):\pm C_0(F_{i,l};\Gamma )]=2. \] 
Hence we have $I_2=-4N\cdot 2^{-5}(-1)^{j/2}$.

%%%%%%%%%%%%%%%%%%%%%%%%%%%%%%%%%%%%%%%%%%%%%%%%%%%%%%%%%%%%%%%%%
%
\subsection{The contribution $\mathbf{I_3}$} 
\label{subsec:I3} 
%
%%%%%%%%%%%%%%%%%%%%%%%%%%%%%%%%%%%%%%%%%%%%%%%%%%%%%%%%%%%%%%%%%

In this section, we evaluate the contribution $I_3$. 
We consider the contribution of elements whose principal polynmoimals are 
$f_7(x)=(x^2+x+1)^2$ and double it to obtain $I_3$. 
Let $\gamma$ be an element of $\Gamma^{(qu)}$ whose principal polynomial is 
$f_7(x)$. 
Then $\gamma$ is $Sp(2;\bR)$-conjugate to an element of the form 
\begin{align*} 
\gamma (2\pi /3, s)= 
\begin{pmatrix} -1/2 & \sqrt{3}/2 & 0 & 0 \\ 
-\sqrt{3}/2 & -1/2 & 0 & 0 \\ 
0 & 0 & -1/2 & \sqrt{3}/2 \\ 
0 & 0 & -\sqrt{3}/2 & -1/2 \end{pmatrix} 
\begin{pmatrix} 1&0&s&0 \\ 0&1&0&s \\ 0&0&1&0 \\ 0&0&0&1 \end{pmatrix} 
\end{align*} 
(cf. Proposition \ref{prop:normalform}) and corresponds to (f-3) of \cite{Wak}. 

We denote by  $C_7$ the set of all $\Gamma$-conjugacy classes of $\Gamma^{(qu)}$ 
whose principal polynomial is $f_7(x)$. 
By the same way as Proposition \ref{Prop:ClassifyQU}, 
we can prove the following proposition: 

\begin{proposition} \label{prop:classifyI3} 
We can decompose $C_7$ into disjoint union of $3N$ subsets as 
\begin{align*} 
C_7=\bigsqcup_{i=1}^N \bigsqcup_{l=1}^3 \Big( \bigsqcup_{\gamma\in F_{i,l}} 
\Big\{ \{ \gamma \} _{\Gamma} \Big\}\Big),  
\end{align*} 
where $N:=\prod\limits _{p\mid D} (1-\left( \frac{-3}{p}\right))$ 
and $F_{i,j}$ is defined as follows. 
\end{proposition}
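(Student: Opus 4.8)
The plan is to follow the template of the proof of Proposition \ref{Prop:ClassifyQU} almost verbatim, replacing the order-$4$ element by an order-$3$ element and the field $\bQ(\sqrt{-1})$ by $F=\bQ(\sqrt{-3})$. First I would take an arbitrary class $\{\gamma\}_\Gamma\in C_7$ and apply Proposition \ref{prop:normalform} to reduce $\gamma$, up to $\Gamma$-conjugacy, to the normal form $\gamma(a,b)=\begin{pmatrix}a&0\\0&a\end{pmatrix}\begin{pmatrix}1&b\\0&1\end{pmatrix}$ with $a\in\fO^{\times}$ a primitive cube root of unity and $b\in\fA^0-\{0\}$; here $a$ has order $3$ precisely because the principal polynomial is $f_7(x)=(x^2+x+1)^2$.

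Next I would sort the possible $a$. By Lemma \ref{lem:Eichler} applied with $F=\bQ(\sqrt{-3})$, the set of $\fO^{\times}$-conjugacy classes of order-$3$ elements of $\fO$ has cardinality $\prod_{p\mid D}\left(1-\left(\frac{-3}{p}\right)\right)=N$; fixing representatives $a_1,\dots,a_N$ and conjugating, I obtain the disjoint decomposition $C_7=\bigsqcup_{i=1}^N X_i$, where $X_i$ consists of the classes of $\gamma(a_i,b)$ with $b$ ranging over $\fA^0-\{0\}$.

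Then, for each fixed $a=a_i$, Lemma \ref{lem:La} tells me that $\gamma(a,b_1)\sim_\Gamma\gamma(a,b_2)$ if and only if $b_1-b_2\in L_\fA(a)$, so $X_i$ is indexed by the finite quotient $\fA^0/L_\fA(a)$. The heart of the argument is to determine this quotient. Writing $B=F\oplus F\beta$ with $\beta a=\bar a\beta$ and using $\bar a=a^{-1}$, the operator $z\mapsto a^{-1}za-z$ annihilates the $F$-part and acts on the $F\beta$-part as multiplication by $a^{-2}-1=a-1$, an element of norm $N(a-1)=2-(a+\bar a)=3$. I would combine this with the explicit $\bZ$-structure of $\fO^0$, and hence of $\fA^0$, coming from the analogue of Proposition 2.5 of \cite{Has84} for $F=\bQ(\sqrt{-3})$, tracking how $\fA$ differs from $\fO$ at the primes dividing $D_2$. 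The expected outcome, in every subcase, is $\sharp(\fA^0/L_\fA(a))=3$, which produces exactly three families $F_{i,1},F_{i,2},F_{i,3}$ for each $i$, and hence the asserted $3N$ subsets.

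The main obstacle will be this last structural computation. In the order-$4$ case of Proposition \ref{Prop:ClassifyQU} the relevant ramified prime is $2$, and the four subcases (i), (ii a)--(ii c) arise; here the ramified prime of $\bQ(\sqrt{-3})$ is $3$, so I must redo Hashimoto's basis analysis with $3$ in place of $2$ and verify that, despite the change in how the cases split according to whether $3\mid D_1$, $3\mid D_2$ or $3\nmid D$, the index of $L_\fA(a)$ in $\fA^0$ is always $3$ and coset representatives can be chosen in the requisite $\delta(a_i,\,\cdot\,,\,\cdot\,)$ form. Once the quotient is pinned down, identifying the $Sp(2;\bR)$-conjugacy data of each family and its shift $\lambda_{i,l}$ is routine and parallels the order-$4$ case.
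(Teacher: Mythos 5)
Your proposal matches the paper's own proof, which is given only as ``by the same way as Proposition \ref{Prop:ClassifyQU}'': reduce to the normal form of Proposition \ref{prop:normalform}, count the $\fO^{\times}$-conjugacy classes of order-$3$ elements via Lemma \ref{lem:Eichler}, and sort the $b$'s via Lemma \ref{lem:La}, redoing Hashimoto's basis analysis at the ramified prime $3$ instead of $2$. One small correction: $\fA^0/L_{\fA}(a)$ is not a finite quotient --- $L_{\fA}(a)$ has rank $2$ inside the rank-$3$ lattice $\fA^0$ --- so what your computation $N(a-1)=3$ actually establishes is that this quotient decomposes into exactly three $\bZ$-lines (the families $F_{i,1},F_{i,2},F_{i,3}$, each infinite and parameterized by the unipotent direction), not that it has three elements.
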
 
Let $a_1,\ldots a_N$ be a complete system of $\fO ^{\times}$-conjugacy classes 
of elemetns of $\fO$ of order $3$. 
(cf. Lemma \ref{lem:Eichler}). 
There exist some $x_i\in\bQ_{>0}$ and $\beta_i\in \fO^{0}$ depending on each $a_i$ 
such that  $F_{i,j}$ 's are given as one of the following two cases.  
Here we put 
\begin{align*} 
\delta (a_i,\gamma _1,\gamma _2) :=\begin{pmatrix} a_i&0\\0&a_i \end{pmatrix} 
\begin{pmatrix} 1&\gamma _1\beta _i\\0&1 \end{pmatrix} \cdot 
\begin{pmatrix} 1&\gamma _2x_i\sqrt{-3}\\0&1 \end{pmatrix}, 
\end{align*} 
where the symbol ^^ ^^ $\cdot$ " means the Jordan decomposition. 

\vspace{3mm} \noindent 
Case 1: 
\begin{align*} 
F_{i,1} &= \big\{ \delta (a_i,0,n)\ |\ n\in \bZ -\{ 0\} \big\} , & 
F_{i,2} &= \big\{ \delta (a_i,1,n)\ |\ n\in \bZ -\{ 0\} \big\} \\ 
F_{i,3} &= \big\{ \delta (a_i,2,n)\ |\ n\in \bZ -\{ 0\} \big\} , & 
\end{align*} 
All elements of $F_{i,l}$'s are $Sp(2;\bR)$-conjugate to $\gamma(2\pi/3,n)$. 
 
\vspace{3mm} \noindent 
Case 2: 
\begin{align*} 
F_{i,1} &= \big\{ \delta (a_i,0,3n)\ |\ n\in \bZ -\{ 0\} \big\} , & 
F_{i,2} &= \big\{ \delta (a_i,(1+2a)/3,3n+1)\ |\ n\in \bZ \big\} \\ 
F_{i,3} &= \big\{ \delta (a_i,(2+a)/3,3n+2)\ |\ l\in \bZ \big\} ,& 
\end{align*} 
All elements of each $F_{i,l}$ are $Sp(2;\bR)$-conjugate to 
$\gamma (2\pi/3,n+(l-1)/3)$.  

\vspace{5mm} 
For each $F_{i,l}$, we define $g_{i,l}$, $\lambda_{i,l}$, $C(F_{i,l};Sp(2;\bR))$, 
$C_0(F_{i,l};Sp(2;\bR))$, $C(F_{i,l};\Gamma)$ and $C_0(F_{i,l};\Gamma)$ 
in the same way as in subsection \ref{subsec:I2}. 
Then, from (f-3) in \cite{Wak}, we have 
\begin{align*} 
I_3 &= \sum _{i=1}^{N} \sum _{l=1}^{3} \cdot 
\frac{\mathrm{vol}(C_0(F_{i,l};\Gamma )\backslash C_0(F_{i,l};Sp(2,\bR )))}{[C(F_{i,l};\Gamma ):\pm C_0(F_{i,l};\Gamma )]} \\ 
 &\hspace{140pt} \cdot (-2^{-1}3^{-1}[1,-1,0;3]_j)\cdot (1-\sqrt{-1}\cot ^{*}\pi\lambda_{i,l}).  
\end{align*} 
We can verify that 
\begin{align*} 
C(F_{i,l};\Gamma ) &= g_{i,l}\left. \Big\{ \pm \gamma (\theta ,t) \right| \theta =0, \pi/3, 2\pi/3, \ t\in\bZ \Big\} g_{i,l}^{-1}, \\ 
C_0(F_{i,l};\Gamma ) &= g_{i,l}\left. \Big\{ \gamma(0,t) \right| t\in\bZ \Big\} g_{i,l}^{-1} 
\end{align*} 
and 
\[ \mathrm{vol}\big( C_0(F_{i,l};\Gamma )\backslash C_0(F_{i,l};Sp(2;\bR ))\big) =1,\hspace{8pt} [C(F_{i,l};\Gamma ):\pm C_0(F_{i,l};\Gamma )]=3. \] 
Hence we have $I_3=-3N\cdot 2^{-1}3^{-2}[1,-1,0;3]_j$.

%%%%%%%%%%%%%%%%%%%%%%%%%%%%%%%%%%%%%%%%%%%%%%%%%%%%%%%%%%%%%%%%%%%%%%%%%%
%%%%%%%%%%%%%%%%%%%%%%%%%%%%%%%%%%%%%%%%%%%%%%%%%%%%%%%%%%%%%%%%%%%%%%%%%%
%
\section{Numerical examples} \label{sec:example} 
%
%%%%%%%%%%%%%%%%%%%%%%%%%%%%%%%%%%%%%%%%%%%%%%%%%%%%%%%%%%%%%%%%%%%%%%%%%%
%%%%%%%%%%%%%%%%%%%%%%%%%%%%%%%%%%%%%%%%%%%%%%%%%%%%%%%%%%%%%%%%%%%%%%%%%%

In this section, we give some numerical examples of 
$\dim _{\bC} S_{k,j}(\Gamma (D_1,D_2))$ for various $D_1,D_2$. 
The tables for $D=D_1=6,10,15$ appeared in \cite{Wak}. 
Our theorem can not be applied for $k\leq 4$. 
In the following tables, 
we formally substitute $k\leq 4$ in the formula of Theorem \ref{thm:main}. 
Hashimoto conjectured that the dimension of $S_{4,0}(\Gamma (D,1))$  
(resp. $S_{3,0}(\Gamma (D,1))$) can be obtained 
by substituing $k=4$ in Theorem \ref{thm:main} 
(resp. by substituing $k=3$ and adding $+1$). 
(Conjecture 4.3, 4.4 in \cite{Has84}). 
We see from our numerical examples that 
there is a possibility that these conjectures hold 
also in the general case $\Gamma (D_1,D_2)$. 
(cf. \cite{Ibu07} in the split case). 

\vspace{5mm} 
$\mathbf{(I) D=2\cdot 3}$  

\vspace{2mm} 
{\samepage 
(i) $D_1=2\cdot 3$, $D_2=1$ \\ 
\resizebox{\textwidth}{15mm}{ 
\begin{tabular}{|c|ccccc|ccccccccccc|} \hline 
$j\backslash k$ &0&1&2&3&4&5&6&7&8&9&10&11&12&13&14&15 \\ \hline 
0&0&-1&0&-1&2&0&4&2&8&5&15&10&25&15&34&26 \\ \hline 
2&-1&2&0&1&2&2&5&7&15&17&33&34&53&58&91&96 \\ \hline 
4&0&-1&0&2&4&6&14&19&35&42&67&77&114&126&179&200 \\ \hline 
6&-2&-1&1&5&9&17&30&40&65&82&118&145&195&224&299&341 \\ \hline 
8&-3&-2&2&7&19&27&49&67&106&131&188&223&298&346&448&514 \\ \hline 
\end{tabular}}
} 

\vspace{2mm} 
{\samepage 
(ii) $D_1=3$, $D_2=2$ \\ 
\resizebox{\textwidth}{15mm}{ 
\begin{tabular}{|c|ccccc|ccccccccccc|} \hline 
$j\backslash k$ &0&1&2&3&4&5&6&7&8&9&10&11&12&13&14&15 \\ \hline 
0&-1&-1&0&0&2&1&3&4&7&5&9&11&17&14&21&24 \\ \hline 
2&0&1&0&1&0&1&3&6&7&10&18&23&29&36&52&61 \\ \hline 
4&0&-1&0&1&2&2&7&12&19&23&36&48&65&75&100&122 \\ \hline 
6&0&0&1&5&6&11&19&29&39&51&72&93&116&140&180&214 \\ \hline 
8&-1&-2&2&5&12&16&30&44&64&79&110&139&179&211&265&315 \\ \hline 
\end{tabular}}
} 

\vspace{2mm} 
{\samepage 
(iii) $D_1=2$, $D_2=3$ \\ 
\resizebox{\textwidth}{15mm}{ 
\begin{tabular}{|c|ccccc|ccccccccccc|} \hline 
$j\backslash k$ &0&1&2&3&4&5&6&7&8&9&10&11&12&13&14&15 \\ \hline 
0&-1&-1&0&0&1&1&3&2&4&6&6&7&12&11&14&19 \\ \hline 
2&0&1&0&0&0&1&1&3&4&7&10&14&18&25&31&39 \\ \hline 
4&1&0&0&2&1&3&7&8&13&20&24&34&45&53&69&86 \\ \hline 
6&0&-1&1&3&2&8&12&16&25&36&43&60&77&92&115&143 \\ \hline 
8&0&0&2&3&9&13&21&30&43&56&75&94&119&146&178&212 \\ \hline 
\end{tabular}}
} 

\vspace{2mm} 
{\samepage  
(iv) $D_1=1$, $D_2=2\cdot 3$ \\  
\resizebox{\textwidth}{15mm}{ 
\begin{tabular}{|c|ccccc|ccccccccccc|} \hline 
$j\backslash k$ &0&1&2&3&4&5&6&7&8&9&10&11&12&13&14&15 \\ \hline 
0&0&-1&0&-1&1&2&2&2&3&4&6&6&8&8&11&13 \\ \hline 
2&-1&2&0&0&0&0&1&2&2&4&5&9&10&15&18&22 \\ \hline 
4&1&0&0&1&1&1&4&5&7&11&15&19&26&32&40&50 \\ \hline 
6&0&0&1&3&1&6&7&11&17&21&27&38&46&58&70&86 \\ \hline 
8&0&0&2&1&8&8&12&19&27&34&47&56&72&89&109&127 \\ \hline 
\end{tabular}}
}  

\vspace{3mm} 
$\mathbf{(II) D=2\cdot 5}$ 

\vspace{2mm} 
{\samepage 
(i) $D_1=2\cdot 5$, $D_2=1$ \\ 
\resizebox{\textwidth}{15mm}{ 
\begin{tabular}{|c|ccccc|ccccccccccc|} \hline
$j\backslash k$ &0&1&2&3&4&5&6&7&8&9&10&11&12&13&14&15 \\ \hline
0&0&-1&0&-1&4&2&13&5&26&19&56&41&98&70&149&123 \\ \hline
2&-2&3&0&3&9&12&28&39&82&99&170&185&285&316&470&513 \\ \hline
4&0&-3&0&8&23&33&76&99&180&227&346&408&587&675&926&1051 \\ \hline
6&-8&-7&3&18&46&83&150&203&330&423&607&742&1004&1173&1534&1771 \\ \hline
8&-22&-12&3&31&88&141&246&347&532&684&955&1157&1522&1805&2302&2669 \\ \hline
\end{tabular} 
}}

\vspace{2mm} 
{\samepage 
(ii) $D_1=5$, $D_2=2$ \\ 
\resizebox{\textwidth}{15mm}{ 
\begin{tabular}{|c|ccccc|ccccccccccc|} \hline
$j\backslash k$ &0&1&2&3&4&5&6&7&8&9&10&11&12&13&14&15 \\ \hline
0&0&-1&0&-1&2&3&7&7&15&16&30&32&53&55&84&88 \\ \hline
2&-2&3&0&1&4&8&16&28&45&61&93&118&164&203&269&316 \\ \hline
4&2&-1&0&5&13&21&45&64&102&140&201&253&344&418&539&643 \\ \hline
6&-3&-4&3&11&25&53&88&128&196&259&355&456&592&721&909&1079 \\ \hline
8&-12&-5&3&17&53&88&146&218&315&415&564&706&905&1105&1367&1616 \\ \hline
\end{tabular}}
}

\vspace{2mm} 
{\samepage 
(iii) $D_1=2$, $D_2=5$ \\ 
\resizebox{\textwidth}{15mm}{ 
\begin{tabular}{|c|ccccc|ccccccccccc|} \hline
$j\backslash k$ &0&1&2&3&4&5&6&7&8&9&10&11&12&13&14&15 \\ \hline
0&-1&-1&0&0&2&2&4&5&8&10&14&17&23&28&35&42 \\ \hline
2&-1&2&0&0&2&4&5&12&16&24&35&47&60&81&100&124 \\ \hline
4&2&0&0&2&4&7&16&24&36&53&73&96&127&160&200&247 \\ \hline
6&-1&-1&3&7&10&25&35&53&78&106&137&184&229&285&352&426 \\ \hline
8&-3&-1&3&6&23&35&57&86&122&161&218&275&347&430&524&626 \\ \hline
\end{tabular}} 
} 

\vspace{2mm} 
{\samepage 
(iv) $D_1=1$, $D_2=2\cdot 5$ \\ 
\resizebox{\textwidth}{15mm}{ 
\begin{tabular}{|c|ccccc|ccccccccccc|} \hline
$j\backslash k$ &0&1&2&3&4&5&6&7&8&9&10&11&12&13&14&15 \\ \hline
0&-1&-1&0&0&2&3&4&5&7&9&12&14&18&21&26&31 \\ \hline
2&-1&2&0&0&1&2&3&7&9&14&20&28&35&48&59&73 \\ \hline
4&2&0&0&1&2&3&9&13&20&30&42&55&74&93&117&145 \\ \hline
6&0&0&3&6&7&17&23&34&50&66&85&114&141&175&215&260 \\ \hline
8&-1&0&3&4&16&22&35&53&75&98&133&166&210&260&317&377 \\ \hline
\end{tabular}}
}

\vspace{3mm} 
$\mathbf{(III) D=3\cdot 5}$ 

\vspace{2mm} 
{\samepage 
(i) $D_1=3\cdot 5$, $D_2=1$ \\ 
\resizebox{\textwidth}{15mm}{ 
\begin{tabular}{|c|ccccc|ccccccccccc|} \hline
$j\backslash k$ &0&1&2&3&4&5&6&7&8&9&10&11&12&13&14&15 \\ \hline
0&0&-1&1&0&9&8&34&29&86&85&183&178&331&318&536&531 \\ \hline
2&-1&3&0&7&30&52&117&170&311&405&640&775&1120&1324&1821&2100 \\ \hline
4&-3&-6&1&28&84&149&298&431&703&934&1357&1694&2316&2789&3644&4283 \\ \hline
6&-29&-24&3&63&174&323&574&834&1281&1702&2373&2985&3936&4757&6044&7136 \\ \hline
8&-79&-54&6&119&330&575&979&1416&2091&2756&3752&4681&6044&7305&9117&10746 \\ \hline
\end{tabular}} 
}

\vspace{2mm} 
{\samepage 
(ii) $D_1=5$, $D_2=3$ \\ 
\resizebox{\textwidth}{15mm}{ 
\begin{tabular}{|c|ccccc|ccccccccccc|} \hline
$j\backslash k$ &0&1&2&3&4&5&6&7&8&9&10&11&12&13&14&15 \\ \hline
0&-1&-1&1&1&3&6&15&17&30&50&63&86&126&150&194&254 \\ \hline
2&0&2&0&4&9&24&44&75&115&172&239&327&429&555&699&869 \\ \hline
4&3&-3&1&14&29&63&118&176&271&388&520&698&908&1134&1426&1751 \\ \hline
6&-8&-10&3&32&64&137&229&344&503&705&927&1219&1559&1935&2384&2909 \\ \hline
8&-24&-23&6&50&131&237&390&579&827&1121&1481&1899&2397&2960&3613&4343 \\ \hline
\end{tabular}}
}

\vspace{2mm} 
{\samepage 
(iii) $D_1=3$, $D_2=5$ \\ 
\resizebox{\textwidth}{15mm}{ 
\begin{tabular}{|c|ccccc|ccccccccccc|} \hline
$j\backslash k$ &0&1&2&3&4&5&6&7&8&9&10&11&12&13&14&15 \\ \hline
0&-1&-1&1&1&5&6&11&15&24&32&45&58&78&98&124&152 \\ \hline
2&0&2&0&2&5&14&24&43&65&98&137&187&245&319&401&499 \\ \hline
4&1&-1&1&6&17&35&64&102&153&218&300&398&516&654&816&1001 \\ \hline
6&-4&-2&3&20&42&83&133&206&295&409&543&711&901&1127&1384&1681 \\ \hline
8&-12&-11&6&30&79&139&228&337&481&649&859&1099&1387&1712&2089&2509 \\ \hline
\end{tabular}}
} 

\vspace{2mm} 
{\samepage 
(iv) $D_1=1$, $D_2=3\cdot 5$ \\ 
\resizebox{\textwidth}{15mm}{ 
\begin{tabular}{|c|ccccc|ccccccccccc|} \hline
$j\backslash k$ &0&1&2&3&4&5&6&7&8&9&10&11&12&13&14&15 \\ \hline
0&0&-1&1&0&3&4&6&7&12&15&21&26&35&42&54&65 \\ \hline
2&-1&3&0&1&2&6&9&18&25&39&54&75&96&128&159&198 \\ \hline
4&3&0&1&4&8&13&28&41&61&88&121&158&208&261&326&401 \\ \hline
6&-1&0&3&11&16&37&54&84&121&166&217&289&362&453&556&676 \\ \hline
8&-3&-2&6&11&38&57&93&138&197&260&350&441&558&689&841&1004 \\ \hline
\end{tabular}} 
}

%%%%%%%%%%%%%%%%%%%%%%%%%%%%%%%%%%%%%%%%%%%%%%%%%%%%%%%%%%%%%%%%%
%%%%%%%%%%%%%%%%%%%%%%%%%%%%%%%%%%%%%%%%%%%%%%%%%%%%%%%%%%%%%%%%%

\vspace*{5mm}
\noindent
Hidetaka Kitayama\\
Department of Mathematics\\
Osaka University\\ 
Machikaneyama 1-1, Toyonaka\\ 
Osaka, 560-0043, Japan\\
E-mail: \texttt{h-kitayama@cr.math.sci.osaka-u.ac.jp}\\

\end{document}